\documentclass[11pt,reqno]{amsart}

\usepackage{todonotes}
\usepackage{amsmath}
\usepackage{amsthm}
\usepackage{amssymb}
\usepackage{amsfonts}
\usepackage{amsxtra}
\usepackage{fullpage}
\usepackage{amscd}
\usepackage{mathrsfs}
\usepackage[all]{xypic}
\usepackage{enumerate}
\usepackage{graphicx}
\usepackage[mathscr]{eucal}
\usepackage{tikz}
\usepackage{tikz-cd}
\usepackage{verbatim}

\let\nc\newcommand
\let\renc\renewcommand

\theoremstyle{plain}
\newtheorem{thm}{Theorem}
\newtheorem{prop}[thm]{Proposition}
\newtheorem{cor}[thm]{Corollary}
\newtheorem{lem}[thm]{Lemma}

\theoremstyle{definition}
\newtheorem{defn}[thm]{Definition}
\newtheorem{example}{Example}
\newtheorem{remark}[thm]{Remark}

\newtheorem{rem}[thm]{Remark}

\numberwithin{thm}{section}
\nc{\bdm}{\begin{displaymath}}
\nc{\edm}{\end{displaymath}}
\nc{\bthm}{\begin{thm}}
\nc{\ethm}{\end{thm}}
\nc{\blem}{\begin{lem}}
\nc{\elem}{\end{lem}}
\nc{\bcor}{\begin{cor}}
\nc{\ecor}{\end{cor}}
\nc{\bprop}{\begin{prop}}
\nc{\eprop}{\end{prop}}
\nc{\bdef}{\begin{defn}}
\nc{\eddef}{\end{defn}}

\makeatletter
\renewcommand{\subsection}{\@startsection{subsection}{2}{0pt}{-3ex
plus -1ex minus -0.2ex}{-2mm plus -0pt minus
-2pt}{\normalfont\bfseries}} \makeatother


\def\hp{\hphantom{x}}

\DeclareMathOperator{\Ker}{\mathrm{Ker}}

\DeclareMathOperator{\End}{\mathrm{End}}

\DeclareMathOperator{\gr}{\mathrm{gr}}
\DeclareMathOperator{\Hilb}{{\mathrm{Hilb}}}

\DeclareMathOperator{\Lie}{\mathrm{Lie}}

\DeclareMathOperator{\Rep}{\mathrm{Rep}}


\DeclareMathOperator{\codim}{\mathrm{codim}}

\newcommand{\beq}{\begin{equation}\label}
\newcommand{\eeq}{\end{equation}}

\DeclareMathOperator{\Spec}{\mathrm{Spec}}

\newcommand{\iso}{{\;\stackrel{_\sim}{\to}\;}}

\DeclareMathOperator{\Hom}{\mathrm{Hom}}
\DeclareMathOperator{\GL}{\mathrm{GL}}
\DeclareMathOperator{\SL}{\mathrm{SL}}

\nc{\Z}{\mathbb{Z}}
\newcommand{\N}{\mathbb{N}}
\newcommand{\Q}{\mathbb{Q}}
\newcommand{\R}{\mathbb{R}}
\newcommand{\C}{\mathbb{C}}

\newcommand{\Fun}{\mbox{\mathrm{Fun}}\,}

\nc{\rank}{\textrm{rank} \,}
\nc{\ds}{\dots}

\let\mc\mathcal
\let\mf\mathfrak

\nc{\mbf}{\mathbf}
\nc{\LK}{\textsf{Irr}(K)}
\nc{\LW}{\textsf{Irr}(W)}
\nc{\Res}{\mathsf{Res} \, }
\nc{\Ind}{\mathsf{Ind} \, }

\nc{\cont}{\textrm{cont}}

\nc{\msf}{\mathsf}
\newcommand{\git}{\ensuremath{/\!\!/\!}}

\nc{\minusone}{-1}
\nc{\minustwo}{-2}
\nc{\Mod}{\mathrm{Mod} \,}
\nc{\ms}{\mathscr}
\nc{\Frac}{\mathrm{Frac} \,}
\nc{\ra}{\rightarrow}
\nc{\hra}{\hookrightarrow}
\nc{\lab}{\label}
\renc{\O}{\mc{O}}
\nc{\Tan}{\mc{T}}
\nc{\ul}{\underline}
\nc{\s}{\mathfrak{S}}
\nc{\g}{\mf{g}}
\nc{\pg}{\mf{pg}}
\nc{\pa}{\partial}
\nc{\tit}{\textit}
\nc{\Maxspec}{\mathrm{Maxspec} \, }
\nc{\gldim}{\mathrm{gl.dim}}
\nc{\rkm}{\mathrm{rk} \, (\mf{m})}
\nc{\sm}{\mathrm{sm}}
\nc{\PD}{\mathbb{PD}}
\nc{\hilb}{\textrm{Hilb}}
\nc{\<}{\langle}
\renc{\>}{\rangle}
\nc{\T}{\mathbb{T}}
\nc{\X}{\mathfrak{X}}
\nc{\W}{\mathscr{W}}
\nc{\kt}{\mbf{k}}
\nc{\ko}{\mbf{k}(0)}
\nc{\Ok}{\mc{O}_G \boxtimes \kt_X}
\nc{\Oko}{\mc{O}_G \boxtimes \ko_X}
\nc{\OYk}{\mc{O}_Y \boxtimes \kt_X}
\nc{\id}{\msf{id}}
\nc{\A}{\mathbb{A}}
\nc{\Supp}{\mathrm{Supp}}
\nc{\Grat}{\mc{Grat}}
\nc{\Squo}[1]{\A^{(#1)}}
\nc{\twist}{\mathrm{twist}}
\nc{\Cd}{\mc{C}}
\nc{\Span}{\mathrm{Span}}
\nc{\Grass}{\mathrm{Gr}}
\nc{\Fr}{\mathrm{Fr}}
\nc{\pco}[1]{k[V]^{p\mathrm{co} #1}}
\nc{\Irr}{\mathsf{Irr} }
\renc{\o}{\otimes}
\renc{\gr}{\mathsf{gr}}
\nc{\U}{\mathsf{U}}
\nc{\algD}{\mf{D}}

\nc{\hr}{\mf{h}_{\textrm{reg}}}
\nc{\D}{\mathscr{D}}
\nc{\PIdeg}{\mathrm{P.I.-degree}}
\nc{\ch}{\mathrm{ch}}
\nc{\Fl}{\mathcal{F}\mathrm{l}}
\nc{\Stab}{\mathrm{Stab}}
\nc{\Der}{\mathrm{Der}}
\nc{\rightsim}{\stackrel{\sim}{\longrightarrow}}
\nc{\HZ}{H_{\mbf{h},\Z}(\Z_m)}
\nc{\sing}{\mathrm{sing}}
\nc{\dd}{\mathscr{D}}
\nc{\GKdim}{\mathrm{G.K. dim}}
\nc{\PIdegree}{\mathrm{P.I. degree}}
\nc{\Du}{\mathbb{D}}
\renc{\Fun}{\mathrm{Fun}}
\nc{\Xo}{\mathbb{X}}
\nc{\Cs}{\C^{\times}}
\nc{\dQ}{\overline{Q}}
\nc{\bM}{\mathbf{M}}
\nc{\bv}{\mathbf{v}}
\nc{\bw}{\mathbf{w}}
\nc{\Qv}{\mathfrak{M}}
\nc{\barOm}{\overline{\Omega}}
\nc{\Om}{\Omega}
\nc{\Lag}{\mathfrak{L}}
\nc{\free}{\mathrm{free}}
\nc{\Fix}{\mathfrak{F}}
\nc{\IC}{\mathrm{IC}}
\nc{\Ham}{\mathrm{Ham}}
\nc{\bnu}{\boldsymbol{\nu}}
\nc{\tX}{\mc{X}}
\nc{\tY}{\mc{Y}}
\nc{\reg}{\mathrm{reg}}
\nc{\Char}{\mathfrak{X}}
\nc{\Qo}{Q_0}
\nc{\Qn}{Q_1}
\nc{\ba}{\boldsymbol{\alpha}}
\nc{\Nak}[3]{\mf{M}_{{#1}} ({#2},{#3}) }
\nc{\Nakq}[4]{\mf{M}^{{#1}}_{{#2}} ({#3},{#4}) }
\renc{\a}{\alpha}
\nc{\hcf}[1]{\mathsf{hcf}({#1})}
\nc{\stable}{\mathrm{stable}}

\newcommand{\CharG}{\mathscr{X}}
\newcommand{\CharS}{\mathscr{Y}}
\newcommand{\PGL}{\mathrm{PGL}}
\newcommand{\G}{\mathrm{G}}
\newcommand{\PG}{\mathrm{PG}}
\newcommand{\Sp}{\mathrm{Sp}}

\begin{document}

\title{Symplectic resolutions of quiver varieties}

\begin{abstract}
  In this article, we consider Nakajima quiver varieties from the
  point of view of symplectic algebraic geometry. We prove that they
  are all symplectic singularities in the sense of Beauville and
  completely classify which admit symplectic resolutions.  Moreover we
  show that the smooth locus coincides with the locus of canonically
  $\theta$-polystable points, generalizing a result of Le Bruyn; we
  study their \'etale local structure and find their symplectic leaves.
  An interesting consequence
  of our results is that not all symplectic resolutions of quiver
  varieties appear to come from variation of GIT.  


\end{abstract}

\author{Gwyn Bellamy}

\address{School of Mathematics and Statistics, University of Glasgow, University Gardens, Glasgow, G12 8QW, UK}

\email{gwyn.bellamy@glasgow.ac.uk}

\author{Travis Schedler}

\address{Department of Mathematics, Imperial College London, South Kensington Campus, London, SW7 2AZ, UK}


\email{trasched@gmail.com}

\subjclass[2010]{16G20;17B63,14D25,58F05,16S80}

\keywords{symplectic resolution, quiver variety, Poisson variety}


\maketitle

\centerline{\it Dedicated, with admiration and thanks,}
\centerline{\it to Victor Ginzburg, on the occasion of his $60$th Birthday.}

$\hspace{50mm}$

\centerline{\bf Table of Contents}


$\hspace{50mm}$ {\parbox[t]{115mm}{
\hp${}_{}$\!\hp1.{ $\;\,$} {Introduction}\newline
\hp2.{ $\;\,$} {Quiver varieties}\newline
\hp3.{ $\;\,$} {Canonical Decompositions}\newline
\hp4.{ $\;\,$} {Smooth vs. stable points}\newline
\hp5.{ $\;\,$} {The $(2,2)$ case}\newline
\hp6.{ $\;\,$} {Factoriality of quiver varieties} \newline
\hp7.{ $\;\,$} {Namikawa's Weyl group}\newline
}

$\vspace{3mm}$

\section{Introduction}

Nakajima's quiver varieties \cite{Nak1994}, \cite{NakDuke98}, have
become ubiquitous throughout representation theory. For instance, they
play a key role in the categorification of representations of
Kac-Moody Lie algebras and the corresponding theory of canonical
bases. They provide \'etale-local models of singularities appearing in
important moduli spaces, together with, in most cases, a
canonical symplectic resolution given by varying the stability
parameter. They give global constructions of certain moduli spaces,
such as
resolutions
of du Val singularities \cite{Nak1994}, Hilbert schemes of points on them
\cite{Kuznetsov},
and
Uhlenbeck and Gieseker instanton moduli spaces \cite{Kronheimer-Nakajima,NakajimaBook,Nakajima-ALE-Quiver}.

Surprisingly, there seems to be no explicit criterion in the
literature for when a quiver variety admits a symplectic
resolution; often, in applications, suitable sufficient conditions for
their existence are provided, but they do not appear always to be necessary.
The main motivation of this article is to give such an explicit
criterion. Following arguments of Kaledin, Lehn and Sorger (who
consider the related case of moduli spaces of semistable
sheaves on a $K3$ or abelian surface), our classification
applies  Drezet's criteria to show that certain GIT quotients are locally factorial. To do so we undertake a careful study of the local and global algebraic symplectic geometry of quiver varieties.

Our classification begins by generalizing Crawley-Boevey's
decomposition theorem \cite{CBdecomp} of affine quiver varieties into
products of such varieties, which we will call \emph{indecomposable},
to the non-affine case; i.e., to quiver varieties with nonzero
stability condition (Theorem \ref{thm:main0}). Along the way, we also
generalize Le Bruyn's theorem, \cite[Theorem 3.2]{LeBCoadjoint}, which computes
the smooth locus of these varieties, again from the affine to
nonaffine setting (Theorem \ref{thm:stablesmooth}).

Then, our main result, Theorem \ref{thm:main1}, states that those
quiver varieties admitting resolutions are exactly those whose
indecomposable factors, as above, are one of the following types of
varieties:
\begin{itemize}
\item[(a)] Varieties whose dimension vectors are indivisible roots;
\item[(b)] Symmetric powers of deformations or partial resolutions of du
  Val singularities ($\C^2/\Gamma$ for $\Gamma < \SL_2(\C)$);
\item[(c)] Varieties whose dimension vector are twice a 
root whose Cartan pairing with itself is $-2$ (i.e., the
variety has 
  dimension ten).
\end{itemize}
Here, a dimension vector $\alpha \in \N^{I}$
is called \emph{indivisible} if $\gcd(\alpha_i) = 1$ for $i \in I$. 
The last type (c) is perhaps surprising: it is closely related to
O'Grady's examples \cite{OGr-K3,OGr-sixirr,KaledinLehn,LSOGrady}. In this case, one cannot fully
resolve or smoothly deform via a quiver variety, but after maximally
smoothing in this way, the remaining singularities are
\'etale-equivalent to the product of $V=\C^4$ with the locus of
square-zero matrices in $\mathfrak{sp}(V)$ (as considered in preceding articles). Via the partial Springer resolution \cite{BorhoMacPherson}, the latter is resolved by
the cotangent bundle of the Lagrangian Grassmannian of $V$.  As explained in \cite[Remark 4.6]{KaledinLehn}, \cite{LSOGrady}, this resolution can also
be obtained by
blowing up the reduced singular locus (once), which makes sense globally on the quiver variety.

In the case of type (a), one can resolve or deform by varying the
quiver (GIT) parameters. In fact, (for $\lambda = 0$) it is shown in \cite{BCSquiver} that all symplectic resolutions can be realised in this way. On the other hand, for quiver varieties of type (b), one cannot resolve
in this way, but the variety is well-known to be isomorphic to another
quiver variety (whose quiver is obtained by adding an additional
vertex, usually called a framing, and arrows from it to the other
vertices), which does admit a resolution via varying the parameters.
Moreover, in this case, if the stability parameter is chosen to lie in
the appropriate chamber, then the resulting resolution is a punctual
Hilbert scheme of the minimal resolution of the original du
Val singularity; see \cite{Kuznetsov}. The other chambers give in general different resolutions: in fact, thanks to \cite{BellamyCrawQuotient}, they again produce all symplectic resolutions of symmetric powers of du Val singularities.

\subsection{Symplectic resolutions} In order to state precisely our
main results, we will require some notation, which we will restate
in more detail in Section \ref{sec:quiver}.  Let $Q = (\Qo,\Qn)$ be
a quiver with finitely many vertices and arrows. We fix a dimension
vector $\alpha \in \N^{\Qo}$, deformation parameter
$\lambda \in \C^{\Qo}$, and stability parameter $\theta \in \Z^{\Qo}$,
such that $\lambda \cdot \alpha = \theta \cdot \alpha = 0$. Unless
otherwise stated, we make the following assumption throughout the
paper:
\begin{equation}\label{eq:assume}
\textrm{\textit{If $\theta \neq 0$ then $\lambda \in \R^{Q_0}$.}}
\end{equation}
Nakajima associated to this data the (generally singular) variety,
called a ``quiver variety.'' We briefly recall the definition; see
Section \ref{sec:quiver} for more details. Let $\Rep(Q,\a)$ be the
vector space of representations of $Q$ of dimension $\a$. The group
$\G(\alpha) := \prod_{i \in \Qo} \GL_{\a_i}(\C)$ acts on $\Rep(Q,\a)$;
write $\g(\alpha) = \Lie \G(\alpha)$. Then $\G(\alpha)$ also acts on
$T^*\Rep(Q,\a) \cong \Rep(\dQ,\a)$ with a moment map
$\mu: T^* \Rep(Q,\a) \to \g(\alpha)^* \cong \g(\alpha)$; here $\dQ$ is
the doubled quiver, obtained by adding reverse arrows to $Q$. To $\lambda \in \C^{\Qo}$ we can associate $(\lambda \operatorname{Id}_i)_{i \in \Qo} \in \g(\alpha)$. By abuse
of notation we will consider $\C^{\Qo}$ to be a subset of $\g(\alpha)$
in this way and write $\mu^{-1}(\lambda)$ for the fiber over
$(\lambda \operatorname{Id}_i)_{i \in \Qo}$.  Let
$\mu^{-1}(\lambda)^{\theta} \subseteq \mu^{-1}(\lambda)$ be the
$\theta$-semistable locus; this is the locus corresponding to
representations of $\dQ$ such that the dimension vector $\beta$ of
every subrepresentation satisfies $\theta \cdot \beta \leq 0$. Then
Nakajima defined the variety $\Nak\lambda\a\theta$ as:
$$
\Nak\lambda\a\theta := \mu^{-1}(\lambda)^{\theta} \git \, \G({\a}).
$$
It does not seem to be known  whether $\Nak\lambda\a\theta$, equipped with its natural scheme structure, is reduced (though we expect it is the case). Therefore, following Crawley-Boevey \cite{CBnormal}, we will consider \textit{throughout the paper} all quiver varieties as \textit{reduced schemes}.  

\begin{remark}\label{r:fr-unfr}
  The construction in \cite{Nak1994,NakDuke98} is apparently more
  general, depending on an additional dimension vector, called the
  framing.  However, as observed by Crawley-Boevey \cite{CBmomap},
  every framed variety can be identified with an unframed one.  In
  more detail, for the variety as in \cite{Nak1994,NakDuke98} with
  framing $\beta \in \N^{Q_0}$, it is observed in \cite[Section 1]{CBmomap}
  that the resulting variety can alternatively be constructed by
  replacing $Q$ by the new quiver $(\Qo \cup \{\infty\},
  \widetilde{\Qn})$, where $\widetilde{\Qn}$ consists of $\Qn$
  together with, for every $i \in \Qo$, $\beta_i$ new arrows from
  $\infty$ to $i$; then Nakajima's $\beta$-framed variety is the same
  as $\Nak{(\lambda,0)}{(\alpha,1)}{(\theta,-\alpha \cdot \theta)}$.
    Thus, for the purposes of the questions addressed in this article,
    it is sufficient to consider the unframed varieties.
\end{remark}
Let $R_{\lambda,\theta}^+$ denote those positive roots of $Q$ that
pair to zero with both $\lambda$ and $\theta$. If $\alpha \notin \N
R^+_{\lambda,\theta}$ then $\Nak\lambda\a\theta = \emptyset$,
therefore we assume $\a \in \N R^+_{\lambda,\theta}$. As defined by Beauville \cite{Beauville}, a normal variety $X$ is said to be a symplectic singularity if there
exists an (algebraic) symplectic $2$-form $\omega$ on the smooth locus
of $X$ such that $\pi^* \omega$ extends to a regular $2$-form on the
whole of $Y$, for any resolution of singularities $\pi : Y \rightarrow
X$. We say that $\pi$ is a symplectic resolution if $\pi^* \omega$
extends to a non-degenerate $2$-form on $Y$.  Note that \emph{a symplectic resolution does not always exist,} and when it does exist, it is not always unique.

\begin{thm}\label{thm:sympsing}
The variety $\Nak\lambda\a\theta$ is an irreducible symplectic singularity. 
\end{thm} 

This theorem is important because symplectic singularities have
become important in representation theory: on the one hand they
include many of the most important examples (aside from quiver
varieties, they include linear quotient singularities, nilpotent
cones, orbit closures, Slodowy slices, hypertoric varieties, and
so on), and on the other hand they exhibit important properties, at least in the
conical case, such as the existence of a nice universal family of
deformations \cite{KaledinVerbitsky,Namikawa,Namikawa2} and of quantizations
\cite{BezKaledinQuantization,BPWQuant1,LosevQuantOrbit}.

From both the representation theoretic and the geometric point of
view, it is important to know when the variety $\Nak\lambda\a\theta$
admits a symplectic resolution.  In this article, we address this
question, giving a complete answer. The first step is to reduce to the
case where $\alpha$ is a root for which there exists a $\theta$-stable point
in $\mu^{-1}(\lambda)$.
This is done via the \textit{canonical decomposition} of $\alpha$, as
described by Crawley-Boevey; it is analogous to Kac's canonical
decomposition. In this article, the term canonical decomposition will
only refer to the former, which we now recall.  Associated to
$\lambda,\theta$ is a combinatorially defined set
$\Sigma_{\lambda,\theta} \subset R_{\lambda,\theta}^+$; see Section
\ref{sec:quiver} below. Then $\a$ admits a canonical decomposition
\begin{equation}\label{eq:alphadecomp}
\alpha = n_1 \sigma^{(1)} + \cdots + n_k \sigma^{(k)}
\end{equation}
with $\sigma^{(i)} \in \Sigma_{\lambda,\theta}$ pairwise distinct, such that any other
decomposition of $\a$ into a sum of roots belonging to
$\Sigma_{\lambda,\theta}$ is a refinement of the decomposition
\eqref{eq:alphadecomp}. Closed points in $\mu^{-1}(\lambda)$ correspond to representations of the so-called deformed preprojective algebra $\Pi^{\lambda}(Q)$; see section~\ref{sec:notationpreproj} for details. Then points of $\Nak\lambda\a\theta$ are in bijection with isomorphism classes of $\theta$-polystable representations of $\Pi^{\lambda}(Q)$  (equivalently, representations of the doubled quiver of moment $\lambda$
 which decompose as direct sums of $\theta$-stable representations) of dimension $\alpha$.
 Generalizing \cite[Theorem 1.2]{CBmomap}, Proposition \ref{prop:diffeo} implies

\begin{thm}\label{t:stab-exists}
	There exists a $\theta$-stable representation of the deformed preprojective algebra $\Pi^{\lambda}(Q)$ of dimension $\a$ if and only if $\a \in \Sigma_{\lambda,\theta}$. 
\end{thm}

Crawley-Boevey's Decomposition Theorem \cite{CBdecomp}, which we will show holds in somewhat greater generality, then implies that the canonical decomposition gives a decomposition of the quiver variety as a product of varieties for each of the summands (the first statement of the next theorem). 
We show that 
 the question of existence of symplectic resolutions of $\Nak\lambda\a\theta$ can be reduced to the analogous question for each factor.   

\begin{thm}\label{thm:main0}
	With respect to the canonical decomposition \eqref{eq:alphadecomp}:
	\begin{enumerate}
		\item[(a)] The symplectic variety $\Nak\lambda\a\theta$ is isomorphic to
		$S^{n_1} \Nak\lambda{\sigma^{(1)}}\theta \times \cdots \times
		S^{n_k} \Nak\lambda{\sigma^{(k)}}\theta$.
		\item[(b)] $\Nak\lambda\a\theta$ admits a projective symplectic resolution if and only
		if each $\Nak\lambda{\sigma^{(i)}}\theta$ admits a projective symplectic
		resolution. 
	\end{enumerate}
\end{thm}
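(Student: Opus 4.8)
The plan is to establish the isomorphism $\Nak\lambda\a\theta \cong S^{n_1} \Nak\lambda{\sigma^{(1)}}\theta \times \cdots \times S^{n_k} \Nak\lambda{\sigma^{(k)}}\theta$ first, and then deduce the statement about resolutions from it by general nonsense about symplectic singularities. For the isomorphism, the strategy is to follow Crawley-Boevey's argument in \cite{CBdecomp} and check that every step survives the passage to $\theta \neq 0$. Recall that a point of $\Nak\lambda\a\theta$ is a closed $G(\a)$-orbit of a $\theta$-semistable representation $x \in \mu^{-1}(\lambda)$; by Luna-type GIT arguments (using King's criterion for semistability of representations of the deformed preprojective algebra $\Pi^\lambda(Q)$), such a point is determined by the semisimplification of $x$ as a $\theta$-semistable representation, i.e., a direct sum $\bigoplus_i y_i^{\oplus m_i}$ with the $y_i$ pairwise non-isomorphic $\theta$-stable representations of dimension vectors $\beta_i$, and $\sum_i m_i \beta_i = \a$. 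The first key step is to show each $\beta_i$ lies in $\Sigma_{\lambda,\theta}$ — this is the analogue in the $\theta$-stable setting of the fact that the dimension vector of a simple $\Pi^\lambda(Q)$-module lies in $\Sigma_\lambda$; it should follow from the characterization of $\Sigma_{\lambda,\theta}$ recalled in \S\ref{sec:quiver}, together with the fact that a $\theta$-stable representation is a brick (has no proper subrepresentations of the same $\theta$-slope) so its endomorphism algebra is a division ring, forcing $\dim\beta_i = 1 - \langle\beta_i,\beta_i\rangle$ or the isotropic/real analogues. The second key step is the uniqueness/minimality of \eqref{eq:alphadecomp}: since every collection $(\beta_i, m_i)$ as above gives a decomposition of $\a$ into elements of $\Sigma_{\lambda,\theta}$, it must refine the canonical one, which forces the multiset $\{\beta_i\}$ (with multiplicity) to be obtained from $\{n_j \sigma^{(j)}\}$ by grouping, and a dimension count (comparing $\dim\Nak\lambda\a\theta = 2 - \langle\a,\a\rangle$ with $\sum_j n_j(2 - \langle\sigma^{(j)},\sigma^{(j)}\rangle)$ when the $\sigma^{(j)}$ are imaginary, plus the contribution of the symmetric-power directions) shows that generically $\beta_i$ ranges over the $\sigma^{(j)}$ each with multiplicity dividing $n_j$. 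Assembling these, one builds a morphism $S^{n_1}\Nak\lambda{\sigma^{(1)}}\theta \times \cdots \to \Nak\lambda\a\theta$ by $([x_1^{(1)}],\dots) \mapsto [\bigoplus x_i^{(j)}]$, checks it is well-defined (the direct sum of $\theta$-semistable representations of the same slope is $\theta$-semistable, and closed orbits go to closed orbits), bijective on closed points by the above semisimplification analysis, and an isomorphism by normality of both sides (Proposition \ref{prop:sympsing}) together with injectivity of the differential — or, more cleanly, by exhibiting the inverse via the universal property of the symmetric product. It is compatible with the symplectic forms since on the smooth loci it is étale-locally the obvious product/quotient map.

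The main obstacle will be the first half — the decomposition isomorphism — and within it, the most delicate point is verifying that the GIT quotient $\mu^{-1}(\lambda)^\theta /\!/ G(\a)$ genuinely decomposes as claimed when $\theta \neq 0$, since now one is taking a \emph{projective-over-affine} quotient rather than an affine one, and the relevant Luna slice / étale-local structure statements need to be set up for the non-affine GIT quotient. Concretely, one must show that near a point $[\bigoplus_j (x^{(j)})^{\oplus n_j/?}]$ the variety $\Nak\lambda\a\theta$ is étale-locally modeled on a quiver variety for the "Ext-quiver" (with vertices indexed by the distinct stable summands and arrows given by $\dim\Ext^1$), exactly as in Crawley-Boevey and in Nakajima's local structure theory — the extra ingredient beyond the affine case is that the stability condition on the ambient variety induces the appropriate stability condition on the Ext-quiver variety. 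Once this local structure is in place, both the isomorphism and the compatibility with symplectic forms are formal.

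Given the isomorphism, the equivalence for symplectic resolutions is standard: a projective symplectic resolution of a product is equivalent to a product of projective symplectic resolutions of the factors (one direction is obvious by taking products; the other uses that a symplectic resolution is semismall and that the Stein factorization / the fact that $X_1 \times X_2$ symplectic-resolvable forces each $X_i$ to be, which follows because the fibers over $(p_1,p_2)$ must split the symplectic form — see e.g.\ the argument that symplectic resolutions are "local" on the base). For the symmetric-power factors, one invokes the known fact (which we will cite or prove in \S\ref{sec:quiver}) that $S^n Y$ for $Y$ a symplectic singularity of dimension $2$ admits a symplectic resolution iff ... — but in fact for the purposes of this theorem we need only the clean statement that $S^n Y$ admits a projective symplectic resolution if and only if $Y$ does (for $\dim Y = 2$, via the Hilbert scheme $\Hilb^n$; for $\dim Y \geq 4$, $S^n Y$ for $n \geq 2$ never does, and $Y$ itself is the $n=1$ case), so that "each $S^{n_i}\Nak\lambda{\sigma^{(i)}}\theta$ resolvable" reduces to "each $\Nak\lambda{\sigma^{(i)}}\theta$ resolvable." Combining these two reductions yields the theorem.
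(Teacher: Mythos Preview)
Your proposal has the right skeleton for the isomorphism, but the paper takes a genuinely different route, and your argument for the ``only if'' direction of the resolution equivalence has a real gap.

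\textbf{The isomorphism.} You propose to redo Crawley-Boevey's proof directly for nonzero $\theta$, checking bijectivity of the direct-sum map $\phi$ on closed points via the semisimplification/refinement analysis. The paper does \emph{not} do this. Instead, it constructs $\phi$ as you do, but establishes bijectivity by a hyperk\"ahler twist (Proposition~\ref{prop:diffeo}): under the assumption $\lambda \in \R^{Q_0}$, multiplication by a suitable unit quaternion gives a stratification-preserving homeomorphism $\Nak\lambda\a\theta \to \Nak{-\lambda - \mathbf{i}\theta}\a 0$, reducing to the affine case where Crawley-Boevey's theorem applies verbatim. Normality (itself proved via the twist plus \'etale-local structure) and Zariski's Main Theorem then upgrade bijectivity to an isomorphism. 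Your direct route is in principle viable, but your sketch glosses over the hardest point: injectivity of $\phi$ on \emph{all} closed points is not a formal consequence of ``every decomposition refines the canonical one,'' since a given polystable representation may a priori admit several groupings of its stable summands into blocks of dimension $\sigma^{(j)}$. Crawley-Boevey's actual argument for $\theta=0$ is more delicate than your summary suggests, and the hyperk\"ahler trick is precisely what lets the paper avoid re-running it.

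\textbf{The resolution equivalence.} Your ``only if'' direction rests on the claim that a product $X_1 \times X_2$ of symplectic singularities admits a projective symplectic resolution only if each factor does. You gesture at semismallness and fiber-splitting, but give no proof, and this is not a standard citable fact. The paper does not argue this way at all: it proves the converse \emph{simultaneously with} Theorem~\ref{thm:main1}, by showing that if some $\sigma^{(i)}$ is a divisible non-isotropic imaginary root with $(\gcd,p)\neq(2,2)$, then near a suitable point (smooth in all other factors, lying in the open set $U$ of Lemma~\ref{lem:Uopen} in the bad factor) the whole variety $\Nak\lambda\a\theta$ is locally factorial and terminal, hence admits no symplectic resolution by van der Waerden purity. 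So the converse in Theorem~\ref{thm:main0} is obtained not by a general product-splitting principle but by the specific local-factoriality machinery (Drezet's theorem, Corollary~\ref{cor:locallyfactorial}). Your proposed shortcut, even if ultimately provable, would bypass the paper's main technical contribution.
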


Here $S^n X$ denotes the $n$th symmetric product of $X$. 

To finish the classification, it suffices to describe the case
$\alpha \in \Sigma_{\lambda,\theta}$. Write $\gcd (\alpha)$ for the greatest common divisor of the integers $\{ \alpha_i \}_{i \in \Qo}$; it is divisible if $gcd(\alpha)> 1$, and otherwise indivisible. Let $p(\alpha) := 1-\frac{1}{2}(\alpha,\alpha)$ where $(-,-)$ is the Cartan pairing associated to the undirected graph underlying the quiver, i.e.,
$(e_i, e_j) = 2 - |\{a \in \Qn \, | \, a \colon i \to j \text{ or } a\colon j \to i\} |$, 
for elementary vectors $e_i, e_j$. As we will show below (in Corollary
\ref{cor:dimsympsing}), $2 p(\alpha) = \dim \Nak\lambda\a\theta$.
Finally, as we will recall in Section \ref{sec:quiver}, elements
$\alpha \in \Sigma_{\lambda,\theta}$ are divided into real roots (when
$p(\alpha) = 0$) and imaginary roots (when $p(\alpha) > 0$). The case
$p(\alpha)=1$ is particularly important and called \emph{isotropic},
since it means $(\alpha,\alpha)=0$. When $p(\alpha) > 0$ we say that $\alpha$ is \textit{anisotropic}.  Note that, when $\sigma^{(i)}$ is anisotropic in the canonical decomposition \eqref{eq:alphadecomp}, then $n_i=1$ (see Corollary \ref{c:ni=1} below).

Our main theorem is then:
\begin{thm}\label{thm:main1}
  Let $\alpha \in \Sigma_{\lambda,\theta}$.
  Then 
  $\Nak\lambda\a\theta$ admits a projective symplectic resolution if and only if
  $\alpha$ is indivisible or
%
  $\left( \gcd(\a),p\left( \gcd(\a)^{-1} \a \right) \right) = (2,2)$.
\end{thm}
The latter case in the theorem will be referred to as ``the $(2,2)$ case''.

If $\alpha \in \Sigma_{\lambda,\theta}$ is
indivisible and anisotropic,
then a projective symplectic resolution
of $\Nak\lambda\a\theta$ is given by moving $\theta$ to a generic
stability parameter. However, this fails in the $(2,2)$ case.
It seems unlikely that $\Nak\lambda\a\theta$ can be resolved by another quiver variety in this case. Instead, we show that the $10$-dimensional symplectic
singularity $\Nak\lambda\a\theta$ can be resolved by blowing up the singular locus. We will need the partial ordering $\ge$ on stability conditions,
where $\theta' \ge \theta$ if every $\theta'$-semistable representation
is $\theta$-semistable; see Section \ref{sec:stab} below.

\begin{thm}\label{thm:blowup22Q} Let $\alpha \in \Sigma_{\lambda,\theta}$, and
  suppose   $\left( \gcd(\a),p\left( \gcd(\a)^{-1} \a \right) \right) = (2,2)$.
Let $\theta'$ be a generic stability parameter such that $\theta' \ge \theta$.
 If $\widetilde{\mathfrak{M}}_{\lambda}(\alpha,\theta')$ is the blowup
  of $\Nak\lambda{\alpha}{\theta'}$ along 
the reduced singular locus, then the canonical
  morphism $\pi : \widetilde{\mathfrak{M}}_{\lambda}(\alpha,\theta')
  \rightarrow \Nak\lambda{\alpha}{\theta}$ is a projective symplectic
  resolution of singularities.
\end{thm} 



In most cases where a projective symplectic resolution does not exist,
we can prove that neither does a proper one exist (note that every projective resolution is proper but not conversely). We say that $\a \in \Sigma_{\lambda,\theta}$ is ``$\Sigma$-divisible'' if $\a = m\beta$ for $m \geq 2$ and $\beta \in \Sigma_{\lambda,\theta}$. This is a slightly stronger condition than being divisible, although they coincide in most cases: see Theorem \ref{thm:Sigmadivisiblevsdivisible} below.
\begin{thm}\label{thm:proper}
  If $\alpha \in \Sigma_{\lambda,\theta}$ is $\Sigma$-divisible, and $\left( \gcd(\a),p\left( \gcd(\a)^{-1} \a \right) \right) \neq (2,2)$, then $\Nak\lambda\a\theta$ does not admit a proper symplectic resolution.

  Moreover, for general $\alpha \in \N^{Q_0}$, if there exists a $\Sigma$-divisible factor $\sigma^{(i)}$ in the decomposition \eqref{eq:alphadecomp} that satisfies $\left( \gcd(\sigma^{(i)}),p\left( \gcd(\sigma^{(i)})^{-1} \sigma^{(i)} \right) \right) \neq (2,2)$, then $\Nak\lambda\a\theta$ does not admit a proper symplectic resolution.  
  \end{thm}
\begin{remark}\label{r:proper-proj}
  Most of the literature deals with projective rather than proper
  resolutions.  However, there are interesting examples of proper
  symplectic resolutions that are not projective. For example,
  in \cite{ArPr-ProperHypertoric} such examples are constructed admitting
  Hamiltonian torus actions of maximal dimension (this condition is
  called hypertoric there, which generalizes the usual definition of
  hypertoric variety).
  It seems to be an interesting question if, whenever a proper
  symplectic resolution exists, also a projective symplectic
  resolution exists. More generally, it seems reasonable to ask
  whether, if a proper symplectic resolution exists, then every
  proper $\Q$-factorial terminalization is symplectic; if we restrict to
  projective resolutions and terminalizations, then the proof of
  \cite[Theorem 5.5]{Namikawa} shows that this holds at least when the
  singularity is conical with homogeneous generic symplectic form.
\end{remark}

\subsection{Symplectic leaves and the \'etale local structure}

As Hamiltonian reductions, quiver varieties have a natural Poisson structure. The symplectic leaves of this Poisson structure are the maximal connected (analytic immersed) submanifolds on which the Poisson bracket is non-degenerate. Put differently, 
the reduction naturally is foliated by symplectic submanifolds.   For example, the locus of stable representations of the doubled quiver inside $\mu^{-1}(\lambda)$ consists of free closed orbits under the group $\PG(\alpha):=\G(\alpha)/\Cs$, hence its Hamiltonian reduction here is well known to be symplectic. If nonempty, this forms an open dense symplectic leaf of the quiver variety.

Since we have shown that quiver varieties have symplectic singularities, thanks to \cite[Theorem 2.3]{Kaledinsympsingularities},  they must necessarily be a finite union of symplectic leaves and the latter are algebraic. It has long been assumed that the leaves are precisely the strata $\Nak{\lambda}{\a}{\theta}_{\tau}$  given by the representation-type stratification. Here $\tau$ is a decomposition of $\alpha$ in $\Sigma_{\lambda,\theta}$. Since this explicit identification of the symplectic leaves is crucial later in the article, we provide a complete proof that this is indeed the case.

\begin{thm}\label{thm:symplecticleavesarerepstrata}
	The symplectic leaves of $\Nak{\lambda}{\a}{\theta}$ are the representation-type strata $\Nak{\lambda}{\a}{\theta}_{\tau}$ . 
\end{thm} 

This result follows from Proposition~\ref{prop:strata} and Corollary~\ref{cor:strataNakaconnected}. The classification of symplectic leaves already appears in \cite{MarsdenWeinsteinStratification}, however there seems to be a gap in the proof given there; see remark~\ref{rem:Martinoconnectedleaves}. 

Theorem~\ref{thm:symplecticleavesarerepstrata} allows us to give a combinatorial classification, in Corollary~\ref{cor:smoothminimal} below, of those quiver varieties that are smooth.   

An important tool in both the proof of Theorem~\ref{thm:symplecticleavesarerepstrata} and later results on the factoriality of quiver varieties is an \'etale local description of the varieties. In the case of trivial stability parameter $\theta = 0$, this \'etale local picture was described by Crawley-Boevey in \cite{CBnormal}, where it was used to prove that those quiver varieties are normal. In section~\ref{sec:etale} we show that this \'etale local description holds for all stability parameters. See Theorem~\ref{thm:etalelocalgeneral} for the precise statement.     

A relative version of Theorem~\ref{thm:etalelocalgeneral}, proving an \'etale local description of the morphism $\mf{M}(\a,\theta) \to \mf{M}(\a,\theta')$ is given in \cite{BellamyCrawQuotient}. This result allows the authors to completely classify (in the case of a framed affine Dynkin quiver) those walls in the space of stability parameters that are flops; resp. are divisorial contractions. This is a key step in showing (as mentioned above) that all symplectic resolutions of symmetric powers of du Val singularities are given by variation of GIT. 

\subsection{Factoriality of quiver varieties} 
The real difficulty in the proof of Theorem \ref{thm:main1} is in
showing that if $\a \in \Sigma_{\lambda,\theta}$ is
divisible and anisotropic,
\begin{equation}\label{eq:not22pn}
\left( \gcd(\a),p\left(\gcd(\a)^{-1} \a \right) \right) \neq (2,2),
\end{equation}
then $\Nak{\lambda}{\a}{\theta}$ does not admit a projective symplectic
resolution. Based upon a result of Drezet \cite{Drezet}, who
considered instead the moduli space of semistable sheaves on a
rational surface, we show in Corollary \ref{cor:factorial} the following result. Recall that a variety is locally factorial if all of its local rings are unique factorization domains.
\begin{thm}\label{t:fact} 
  Assume that $\a \in \Sigma_{\lambda,\theta}$ is an anisotropic root satisfying condition \eqref{eq:not22pn}, and that
  $\theta$ is generic. Then the quiver variety
  $\Nak{\lambda}{\a}{\theta}$ is locally factorial.
\end{thm}
Observe that we did not require $\a$ to be
divisible, although if were
indivisible then we already noted that $\Nak\lambda\a\theta$ is
smooth for generic $\theta$. On the other hand, in the
divisible case,
we will see that, for $\theta$ generic, the variety
$\Nak{\lambda}{\a}{\theta}$ has terminal singularities, using that, by
\cite{NamikawaNote}, this is equivalent to having singularities in
codimension at least four.
Therefore, by a
well-known fact, the above theorem implies that it cannot admit a
proper symplectic resolution.

In fact, we prove 
in Corollary \ref{cor:factorial} 
a more precise statement than Theorem \ref{t:fact}
which does not require that
$\theta$ be generic.
By the argument given in the proof of Theorem \ref{thm:nonisonores},
we see that the corollary implies that this statement holds for open
subsets of $\Nak{\lambda}{\a}{\theta}$. Therefore we conclude the
following strengthening of the nonexistence direction of Theorem
\ref{thm:main1}:
\begin{cor}\label{cor:openfact}
  Assume that $\a \in \Sigma_{\lambda,\theta}$ is
  divisible,
it satisfies condition \eqref{eq:not22pn},
  and $\theta$ is generic.  Under the assumptions of Theorem
  \ref{t:fact}, if $U \subseteq \Nak{\lambda}{\a}{\theta}$ is any
  singular Zariski open subset, then $U$ does not admit a proper symplectic
  resolution.
\end{cor}
In fact, by Corollary \ref{cor:factorial} below, we can drop in
Corollary \ref{cor:openfact} the assumption that $\theta$ is generic,
at
the price of replacing $\Nak{\lambda}{\a}{\theta}$ by a certain
canonical open set: the locus of direct sums of stable representations
of dimension vector proportional to $\alpha$.

In particular, in many cases, there are open subsets
$U \subseteq \Nak{\lambda}{\a}{\theta}$ which formally locally admit
symplectic resolutions everywhere, but do not admit one globally.  For
example, if $\a = 2\beta$ for some $\beta \in \Sigma_{\lambda,\theta}$
with $p(\beta) \geq 3$ (cf.~the definition of $p$ above Theorem
\ref{thm:main1}), then we can let $U$ be the locus of representations
which are either stable or decompose as $X = Y \oplus Y'$ for $Y,Y'$
\emph{nonisomorphic} $\theta$-stable representations of dimension
vectors equal to $\beta$.

There is one quiver variety in particular that captures the
``unresolvable'' singularities of $\Nak{\lambda}{\a}{\theta}$. This
variety, which we denote $\Char(g,n)$ with $g,n \in \N$, has been
studied in the works of Lehn, Kaledin and
Sorger. Concretely,
$$ \Char(g,n) := \left\{ (X_1,Y_1, \ds, X_g,Y_g) \in \End_{\C}(\C^n)
\ \left| \ \sum_{i = 1}^d [X_i,Y_i] = 0 \right. \right\} \git 
\GL(n,\C),
$$
Viewed as a special case of Corollary \ref{cor:factorial}, we see that
$\Char(g,n)$ does not admit a proper symplectic resolution if $g, n \ge 2$ and $(g,n) \neq (2,2)$.


When $g = 1$, the Hilbert scheme of $n$ points in the plane provides a
symplectic resolution of $\Char(g,n) \simeq S^n \C^2$; see
\cite[Theorem 1.2.1, Lemma 2.8.3]{AlmostCommutingVariety}. When
$n = 1$, one has $\Char(g,n) \simeq \C^{2g}$.

\begin{remark}
  It is interesting to note that \cite[Theorem 1.1]{CBmomap} implies
  that the moment map
$$
 (X_1,Y_1, \ds, X_g,Y_g) \mapsto \sum_{i = 1}^g [X_i,Y_i]
$$
is flat when $g > 1$, in contrast to the case $g = 1$, which is easily
seen not to be flat.
\end{remark}

\begin{remark}
  Generalizing the Geiseker moduli spaces that arise from framings of
  the Jordan quiver, it seems likely that the framed versions of
  $\Char(g,n)$, which are smooth for generic stability parameters,  should have interesting combinatorial and representation theoretic properties.
\end{remark}

\begin{remark}\label{r:use-formal}
  One does not need the full strength of Theorem \ref{t:fact} to prove
  that $\Nak{\lambda}{\a}{\theta}$ does not admit a symplectic
  resolution: it suffices to show that a formal neighborhood of some
  point does not admit a symplectic resolution.  This reduces the
  problem to the one-vertex case, i.e., to $\Char(g,n)$. However, the
  techniques (following \cite{KaledinLehnSorger}) do not actually
  simplify in this case.
 Moreover, this would not be enough to imply Corollary
  \ref{cor:openfact}.
\end{remark}

\subsection{Smooth versus canonically polystable points} In order to
decide when the variety $\Nak\lambda\a\theta$ is smooth, we describe
the smooth locus in terms of $\theta$-stable representations. Write
the canonical decomposition
$n_1 \sigma^{(1)} + \cdots + n_k \sigma^{(k)}$ of
$\alpha \in \N R^+_{\lambda,\theta}$ as
$\beta^{(1)} + \cdots + \beta^{(\ell)}$, where a given
$\beta \in \Sigma_{\lambda,\theta}$ may appear multiple times. Recall
that a representation is said to be $\theta$-polystable if it is a
direct sum of $\theta$-stable representations. We say that a
representation $x$ is \textit{canonically $\theta$-polystable} if
$x = x_1 \oplus \cdots \oplus x_{\ell}$ where each $x_i$ is
$\theta$-stable, $\dim x_i = \beta^{(i)}$ and $x_i \not\simeq x_j$ for
$i \neq j$, unless $\beta^{(i)} = \beta^{(j)}$ is a real root, i.e.,
$p(\beta^{(i)}) = 0$. Observe that the notion of canonical $\theta$-polystability reduces to $\theta$-stability precisely in the case that $\a \in \Sigma_{\lambda,\theta}$. In general, the set of points of $\Nak\lambda\a\theta$ which
are the image of canonically $\theta$-polystable representations is a
dense open subset. When $\theta = 0$, the result below is due to Le
Bruyn \cite[Theorem 3.2]{LeBCoadjoint} (whose arguments we
generalize).
 
\begin{thm}\label{thm:stablesmooth}
  A point $x \in \Nak\lambda\a\theta$ belongs to the smooth locus if
  and only if it is canonically $\theta$-polystable.
\end{thm}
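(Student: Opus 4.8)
The plan is to prove Theorem~\ref{thm:stablesmooth} in two directions, and the crux will be the forward direction (smooth $\Rightarrow$ $\theta$-canonically stable), since the reverse direction follows from a standard étale-local description of the quiver variety near a polystable point.

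For the reverse direction, suppose $x = x_1 \oplus \cdots \oplus x_\ell$ is $\theta$-canonically stable, with $\dim x_i = \tau^{(i)}$, each $x_i$ being $\theta$-stable, and $x_i \not\simeq x_j$ for $i \neq j$. The stabilizer of $x$ in $G(\alpha)$ is then just the torus $(\C^\times)^\ell/\C^\times$ (one scalar per summand, modulo the global scalar). By the étale-local structure theorem for quiver varieties (the analogue, in this GIT setting, of the Marsden--Weinstein/Luna slice picture used by Crawley-Boevey and Nakajima), a formal or étale neighbourhood of $x$ in $\Nak{\lambda}{\alpha}{\theta}$ is isomorphic to a neighbourhood of $0$ in the corresponding quiver variety for the \emph{local quiver} $Q_x$ — whose vertices are the summands $x_i$, with $\mathrm{Ext}^1$-many arrows between them — at dimension vector $(1,\dots,1)$ and the induced parameters. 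Because all the $\tau^{(i)}$ are distinct and each appears with multiplicity one, this local dimension vector is $(1,\dots,1)$ at \emph{distinct} vertices, and one checks directly (this is the only real computation, and it is short) that the corresponding moment map is a submersion at the relevant points, so the local model is smooth. Hence $x$ is a smooth point. I would also need to recall why the $\theta$-canonically stable locus is dense and open — openness because stability is an open condition and the $\mathrm{Ext}$-vanishing $\mathrm{Ext}^1(x_i,x_j)$ constraints that could force coincidences are closed; density because a generic representation of dimension $\alpha$ in $\mu^{-1}(\lambda)$ decomposes according to the canonical decomposition into non-isomorphic $\theta$-stable pieces, which is essentially the defining property of the canonical decomposition.

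For the forward direction, suppose $x$ is a smooth point; I must show it is $\theta$-canonically stable. Every point of $\Nak{\lambda}{\alpha}{\theta}$ has a unique closed-orbit (polystable) representative, so write $x = x_1^{\oplus m_1} \oplus \cdots \oplus x_r^{\oplus m_r}$ with the $x_i$ pairwise non-isomorphic $\theta$-stable representations of dimension vectors $\beta^{(i)}$, so $\alpha = \sum m_i \beta^{(i)}$. This is automatically a decomposition into roots of $\Sigma_{\lambda,\theta}$, hence refines the canonical decomposition; the claim is equivalent to saying this decomposition \emph{is} the canonical one with all multiplicities reduced to the $\tau^{(i)}$, i.e.\ that no $\beta^{(i)}$ with $m_i \geq 2$ is an imaginary root and no two summands coincide — in other words $x$ being $\theta$-canonically stable is exactly the statement that, in the local quiver model, the dimension vector is $(1,\dots,1)$. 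So it suffices to show: if the polystable type of $x$ involves some $x_i$ with multiplicity $m_i \geq 2$ where $\beta^{(i)}$ is imaginary, or a non-reduced structure forced by an isotropic/real coincidence, then the local model — and hence $x$ — is singular. The local model is again a quiver variety $\Nak{\lambda_x}{n}{\theta_x}$ for the local quiver $Q_x$ at dimension vector $n = (m_1,\dots,m_r)$; one computes the dimension of this variety via the standard formula ($2 - (n,n)_{Q_x}$-type expression, using $\dim\mathrm{Ext}^1(x_i,x_j)$ for the adjacency and $p(\beta^{(i)})$ for the loops at vertex $i$) and compares it to the dimension of its tangent space at $0$, which is the $\mathrm{Ext}^1$-space $\mathrm{Ext}^1_{\Pi^\lambda}(x,x)$. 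The point $0$ is smooth in the local model iff these agree iff $Q_x$ at $n$ has connected support contributing only via real roots / the $(1,\dots,1)$ situation; a multiplicity $m_i \geq 2$ at a vertex with a loop (equivalently $p(\beta^{(i)}) \geq 1$, i.e.\ $\beta^{(i)}$ imaginary) immediately makes $0$ singular (this is the familiar fact that $S^m$ of a positive-dimensional symplectic variety is singular at the diagonal for $m \geq 2$, or directly: the quiver variety for $\geq 1$ loops at a single vertex with dimension $\geq 2$ is singular). This reduces everything to Le~Bruyn's original local computation, now run over the nonzero-$\theta$ local quiver.

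\textbf{The main obstacle} I anticipate is setting up the étale-local structure theorem cleanly in the $\theta \neq 0$ case: one needs that a neighbourhood of the (unique) polystable point in the orbit of $x$ inside the GIT quotient $\mu^{-1}(\lambda)^\theta/\!\!/G(\alpha)$ is étale-equivalent to a neighbourhood of $0$ in the quiver variety of the local quiver $Q_x$ with its induced moment-map level and stability, where stability for the local quiver is induced from $\theta$ in such a way that the $\theta$-stable summands correspond to stable points. For $\theta = 0$ this is Crawley-Boevey's/Le~Bruyn's Luna-slice argument; for $\theta \neq 0$ one must check the slice is compatible with taking the semistable locus and passing to $\Proj$, i.e.\ that GIT-(semi)stability is detected on the slice — this is where Nakajima's and Crawley-Boevey's machinery has to be invoked or adapted, and it is the step I would write most carefully. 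Once that is in place, both directions reduce to the purely combinatorial/local statement that the local quiver variety is smooth at $0$ precisely when its dimension vector is supported on distinct vertices with multiplicity one, which is exactly Le~Bruyn's computation.
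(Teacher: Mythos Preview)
Your proposal has a genuine gap: the claimed equivalence ``$x$ is $\theta$-canonically stable $\Longleftrightarrow$ the local dimension vector $\mathbf{e}$ is $(1,\dots,1)$'' is false. A polystable $x = x_1 \oplus \cdots \oplus x_r$ with pairwise non-isomorphic $\theta$-stable summands (so $\mathbf{e}=(1,\dots,1)$) need not be canonically stable: the multiset $\{\dim x_i\}$ can be a \emph{strict} refinement of the canonical decomposition. Concretely, take $Q$ the one-vertex quiver with $g\ge 2$ loops, $\lambda=\theta=0$, and $\alpha=(2)$. Then $p((2))=5>4=2p((1))$, so $(2)\in\Sigma_{0,0}$ and the canonical decomposition of $(2)$ is $(2)$ itself. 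But $x=x_1\oplus x_2$ with $x_1\not\simeq x_2$ one-dimensional has $\mathbf{e}=(1,1)$ yet is \emph{not} canonically stable --- and it is singular: the local quiver has $-(\beta^{(1)},\beta^{(2)})=2g-2\ge 2$ arrows between the two vertices, and $\mathfrak{M}_0((1,1),0)$ for such a quiver has a Kleinian singularity at $0$. Your forward-direction argument only treats the case $m_i\ge 2$ at a looped vertex, so it would miss this point; your reverse-direction argument (``$\mathbf{e}=(1,\dots,1)$ implies smooth'') would wrongly declare it smooth.

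The paper closes this gap by first invoking the product decomposition $\Nak\lambda\alpha\theta\simeq\prod_i S^{n_i}\Nak\lambda{\sigma^{(i)}}\theta$ (Theorem~\ref{thm:decompCB2}) to reduce to the case $\alpha\in\Sigma_{\lambda,\theta}$. In that case the canonical decomposition of $\alpha$ is $\alpha$ itself, so $\theta$-canonically stable $=$ $\theta$-stable, and your dichotomy becomes correct. The reverse direction is then just ``stable $\Rightarrow$ smooth'' (the free-action argument on the $\theta$-stable locus, not a submersion claim at a polystable point --- note $d\mu$ is never surjective onto $\mathfrak{g}(\alpha)$ when the stabilizer is a nontrivial torus). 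For the forward direction, if $x$ is not stable its polystable decomposition has $\ge 2$ summands; Proposition~\ref{prop:simplevector} gives $\mathbf{e}\in\Sigma_0(Q')$, and then Le~Bruyn's argument shows $0$ is singular in the local model. The reduction via the product decomposition is not a convenience here but the step that makes the local analysis match the notion of canonically stable; without it you would need a direct characterisation of when $0\in\mathfrak{M}_0(\mathbf{e},0)$ is smooth in terms of the local quiver, which is considerably more than ``$\mathbf{e}=(1,\dots,1)$''.
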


\begin{rem}
  Theorem \ref{thm:stablesmooth} confirms the expectation stated after
  Lemma 4.4 of \cite{GordonQuiver}.
\end{rem}

An element $\sigma \in \Sigma_{\lambda,\theta}$ is said to be
\textit{minimal} if there are no
$\beta^{(1)}, \ds , \beta^{(r)} \in \Sigma_{\lambda,\theta}$, with
$r \ge 2$, such that $\sigma = \beta^{(1)} + \cdots + \beta^{(r)}$.

\begin{cor}\label{cor:smoothminimal}
The variety $\Nak\lambda\a\theta$ is smooth if, and only if, in the canonical decomposition $\alpha = n_1 \sigma^{(1)} + \cdots + n_k \sigma^{(k)}$ of $\alpha$, each $\sigma^{(i)}$ is minimal, and the multiplicity $n_i$ is one if $\sigma^{(i)}$ is isotropic.
\end{cor}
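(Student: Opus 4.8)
The plan is to combine Theorem~\ref{thm:main0} with a short analysis of symmetric powers, applying Theorem~\ref{thm:stablesmooth} to the individual factors. By Theorem~\ref{thm:main0} there is an isomorphism $\Nak\lambda\a\theta \cong S^{n_1}\Nak\lambda{\sigma^{(1)}}\theta \times \cdots \times S^{n_k}\Nak\lambda{\sigma^{(k)}}\theta$, and a product of nonempty varieties is smooth exactly when every factor is smooth; so I would reduce to deciding, for a single $\sigma \in \Sigma_{\lambda,\theta}$ and an integer $n \geq 1$, when $S^n\Nak\lambda\sigma\theta$ is smooth, and then match the outcome with the combinatorial condition in the statement. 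Throughout I would use the fact recalled in~\S\ref{sec:quiver} that $\dim\Nak\lambda\sigma\theta = 2p(\sigma)$ for $\sigma \in \Sigma_{\lambda,\theta}$.

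First suppose $\sigma$ is real, so $p(\sigma)=0$. Then $\Nak\lambda\sigma\theta$ is a zero-dimensional irreducible variety by Proposition~\ref{prop:sympsing}, hence a reduced point, so $S^n\Nak\lambda\sigma\theta$ is again a point and is smooth for every $n$. Moreover such a $\sigma$ is automatically minimal: a decomposition of it into $r \geq 2$ elements of $\Sigma_{\lambda,\theta}$ would in particular be a decomposition into positive roots pairing trivially with $\lambda$ and $\theta$, and the inequality defining $\Sigma_{\lambda,\theta}$ would then yield the absurdity $0 = p(\sigma) > \sum_{j} p(\beta^{(j)}) \geq 0$. Thus the real summands $\sigma^{(i)}$ never obstruct smoothness and automatically meet both conditions of the corollary, so the content lies in the imaginary roots. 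Next suppose $\sigma$ is imaginary, so $d := \dim\Nak\lambda\sigma\theta = 2p(\sigma) \geq 2$. I claim that then $S^n\Nak\lambda\sigma\theta$ is singular for all $n \geq 2$: choosing a smooth point $x$ of $\Nak\lambda\sigma\theta$ together with $n-2$ further smooth points $x_3, \ldots, x_n$, all distinct from $x$ and from one another (possible since $d \geq 2$), near the point supported on $2[x]+[x_3]+\cdots+[x_n]$ the variety $S^n\Nak\lambda\sigma\theta$ is \'etale-locally a product of a smooth variety with $S^2 U$, where $U$ is a smooth neighbourhood of $x$; and $S^2 U$ is singular at $2[x]$, being \'etale-locally isomorphic there to $\mathbb{A}^d \times (\mathbb{A}^d/\{\pm 1\})$, which is not smooth because $d \geq 2$. (If $\Nak\lambda\sigma\theta$ is itself singular, then $S^n$ of it is a fortiori singular.)

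It remains to treat $\sigma$ imaginary with $n=1$, where $S^1\Nak\lambda\sigma\theta = \Nak\lambda\sigma\theta$. Since $\sigma \in \Sigma_{\lambda,\theta}$, the canonical decomposition of $\sigma$ is $\sigma$ itself, so in the notation preceding Theorem~\ref{thm:stablesmooth} one has $\ell = 1$ and $\tau^{(1)} = \sigma$; hence a point of $\Nak\lambda\sigma\theta$ is $\theta$-canonically stable precisely when it is $\theta$-stable, and Theorem~\ref{thm:stablesmooth} tells us that $\Nak\lambda\sigma\theta$ is smooth if and only if every polystable representation of dimension vector $\sigma$ is $\theta$-stable. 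Here I would use the identification, part of the basic theory in~\S\ref{sec:quiver}, of $\Sigma_{\lambda,\theta}$ with the set of dimension vectors of $\theta$-stable representations of $\Pi^\lambda(Q)$: a polystable representation is a direct sum of $\theta$-stable representations, whose dimension vectors lie in $\Sigma_{\lambda,\theta}$, so $\Nak\lambda\sigma\theta$ fails to be smooth exactly when $\sigma$ can be written as a sum of $r \geq 2$ (not necessarily distinct) elements of $\Sigma_{\lambda,\theta}$; given such a decomposition $\sigma = \beta^{(1)} + \cdots + \beta^{(r)}$, one picks $\theta$-stable representations $V_j$ of dimension $\beta^{(j)}$ (taking $V_1 = V_2$ in the case $\sigma = 2\beta$), and $V_1 \oplus \cdots \oplus V_r$ is then a polystable point of $\Nak\lambda\sigma\theta$ that is not $\theta$-stable. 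This last condition is exactly the failure of minimality of $\sigma$, so $S^n\Nak\lambda\sigma\theta$ is smooth if and only if $n = 1$ and $\sigma$ is minimal.

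Assembling the cases, $\Nak\lambda\a\theta$ is smooth if and only if each factor $S^{n_i}\Nak\lambda{\sigma^{(i)}}\theta$ is smooth, that is, if and only if for every $i$ either $\sigma^{(i)}$ is real, or $\sigma^{(i)}$ is imaginary with $n_i = 1$ and $\sigma^{(i)}$ minimal. Since real $\sigma^{(i)}$ are automatically minimal (and no further condition is placed on them), this is precisely the assertion that every $\sigma^{(i)}$ is minimal and every imaginary $\sigma^{(i)}$ occurs with multiplicity one, which is the statement of the corollary. The step I expect to need the most care is the equivalence in the $n = 1$ imaginary case: it rests on the precise identification --- in both directions --- of $\Sigma_{\lambda,\theta}$ with the set of dimension vectors of $\theta$-stable representations, together with careful bookkeeping for repeated summands such as $\sigma = 2\beta$. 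The product reduction, the identification of $\Nak\lambda\sigma\theta$ with a single point when $\sigma$ is real, and the singularity of symmetric powers of varieties of dimension at least two are all routine.
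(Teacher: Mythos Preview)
Your proof is correct and follows essentially the same approach as the paper: both reduce via the product decomposition (Theorem~\ref{thm:main0}/\ref{thm:decompCB2}) to individual factors $S^{n_i}\Nak\lambda{\sigma^{(i)}}\theta$, dispose of real roots as points, rule out $n_i \ge 2$ for imaginary $\sigma^{(i)}$ by the singularity of symmetric powers, and for $n_i = 1$ invoke Theorem~\ref{thm:stablesmooth} together with the identification of $\Sigma_{\lambda,\theta}$ with dimension vectors admitting $\theta$-stable representations. Your write-up is somewhat more explicit than the paper's three-line proof (which defers to the reduction at the start of the proof of Theorem~\ref{thm:stablesmooth}); note that the dimension formula and the characterization of $\Sigma_{\lambda,\theta}$ you cite are in Corollary~\ref{cor:dimsympsing} and its proof rather than in \S\ref{sec:quiver}.
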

Since, as recalled in Corollary \ref{c:ni=1} below, $n_i$ is always one if $\sigma^{(i)}$ is aniostropic, we could equivalently drop the assumption ``is isotropic'' at the end of the corollary.

Corollary~\ref{cor:smoothminimal} is a crucial ingredient in the proof of the main result of \cite{BellamyCrawQuotient}, where a key step is the classification of stability parameters for which the corresponding quiver variety (associate to a framed affine Dynkin quiver) is smooth.

\subsection{Namikawa's Weyl group} When both $\lambda$ and $\theta$ are zero, $\Nak{0}{\a}{0}$ is an affine conic  symplectic singularity.  Associated to $\Nak{0}{\a}{0}$ is \emph{Namikawa's Weyl group} $W$ \cite{Namikawa2}, a finite reflection group. In order to compute $W$, one needs to describe the codimension two symplectic leaves of $\Nak{0}{\a}{0}$. More generally, we consider the codimension two leaves in a general quiver variety $\Nak{\lambda}{\a}{\theta}$. It is enough by Crawley--Boevey's canonical decomposition to consider the case $\a \in \Sigma_{\lambda,\theta}$. We show that the codimension two symplectic leaves are parameterized by \textit{isotropic decompositions} of $\a$. 

\begin{defn}\label{defn:isodecomp} 
The decomposition $\alpha = \beta^{(1)} + \cdots + \beta^{(s)} + m_1
\gamma^{(1)} + \cdots m_t \gamma^{(t)}$ is said to be an isotropic decomposition if
\begin{enumerate}
\item[(a)] $\beta^{(i)}, \gamma^{(j)} \in \Sigma_{\lambda,\theta}$. 
\item[(b)] The $\beta^{(i)}$ are  imaginary roots.
\item[(c)] The $\gamma^{(i)}$ are \textit{pairwise distinct} real roots.
\item[(d)] If $\overline{Q}''$ is the quiver with $s + t$ vertices without
  loops and $-(\alpha^{(i)}, \alpha^{(j)})$ arrows from vertex $i$ to vertex $j \neq i$, where $\alpha^{(i)},\alpha^{(j)} \in \{ \beta^{(1)}, \ds,
  \beta^{(s)}, \gamma^{(1)} , \ds , \gamma^{(t)} \}$, then $Q''$ is an
  affine Dynkin quiver.
\item[(e)] The dimension vector $(1,\ds, 1, m_1, \ds, m_t)$ of $Q''$ (where there are $s$ ones) equals $\delta$, the minimal imaginary root. 
\end{enumerate}
\end{defn}
\begin{remark}
  In fact, as we will show in Lemma \ref{l:codim2} below, in an isotropic decomposition of
  $\a \in \Sigma_{\lambda,\theta}$, all of the anisotropic
  $\beta^{(i)}$ are pairwise distinct, except\footnote{Thanks to Jasper van de Kreeke for pointing out this exceptional case.} in the case where $Q''$ is of affine type $A_1$, so that $s=2$, $t=0$, and $\beta:=\beta^{(1)}=\beta^{(2)}$ has self pairing $(\beta,\beta)=-2$.  This may help in finding these
  decompositions.

However, the isotropic $\beta^{(i)}$ need not be distinct.  As an
example, when $Q$ is the quiver with two vertices $1,2$ and two
arrows, one loop at $1$ and one arrow from $1$ to $2$, then we can
take $\a=(4,2)$,
$\beta^{(1)}=(1,0)=\beta^{(2)}=\beta^{(3)}=\beta^{(4)}$, and
$\gamma^{(1)}=(0,1)$. Then $p(\a)=5$ and $\a \in \Sigma_{0,0}$, and
the quiver $\overline{Q}''$ is of affine $D_4$ type with central
vertex corresponding to $\gamma^{(1)}$ and external vertices
corresponding to the $\beta^{(i)}$.  This example is also interesting since $\a \in \Sigma_{0,0}$ is divisible, but not $\Sigma$-divisible (as $\frac{1}{2} \a \notin \Sigma_{0,0}$).
\end{remark}
Given an isotropic decomposition with affine Dynkin quiver $Q''$, let $Q''_f$ be the finite part, which is a Dynkin diagram.

\begin{thm}\label{t:codim-two-strata}
  Let $\alpha \in \Sigma_{\lambda,\theta}$ be imaginary. Then the
  codimension two strata of $\Nak{\lambda}{\a}{\theta}$ are in
  bijection with the isotropic decompositions of $\alpha$. The
  singularity along each such stratum is \'etale-equivalent to the du
  Val singularity of the type $A_n, D_n, E_n$ corresponding to $Q''_f$.
\end{thm}
As a consequence, for $\lambda=0=\theta$, by \cite[Theorem 1.1]{Namikawa2} the Namikawa Weyl
group is a product over all isotropic decompositions $B$ of a group
$W_B$.  This group $W_B$ is either the Weyl group of the corresponding
Dynkin diagram $Q''_f$, or else the centralizer therein of an
automorphism of this diagram, corresponding to the monodromy around
the fiber over a point of the stratum under a crepant resolution of
the complement of the codimension $>2$ strata.

\subsection{Character varieties}\label{sec:charintro}

The methods we use seem to be applicable to many other
situations. Indeed, as we have noted previously, they were first
developed by Kaledin-Lehn-Sorger in the context of semistable sheaves
on a $K3$ or abelian surface. Any situation where the symplectic
singularity is constructed as a Hamiltonian reduction with respect to
a reductive group of type $A$ is amenable to this sort of
analysis. One such situation, which is of crucial importance
throughout geometry, topology, and group theory, is that of character
varieties of a Riemannian surface.

Let $\Sigma$ be a compact Riemannian surface of genus $g > 0$ and $\pi$ its fundamental group. The \textit{$\SL$-character variety} of $\Sigma$ is the affine quotient 
$$
\CharS(g,n) := \Hom(\pi,\SL(n,\C)) \git \SL(n,\C). 
$$
Similarly, the $\GL$-character variety is
$$
\CharG(g,n) =  \Hom(\pi,\GL(n,\C)) \git \GL(n,\C).
$$
In the article \cite{BSCharacter} we show that $\CharG(g,n)$ and $\CharS(g,n)$ are irreducible symplectic singularities. Moreover, we show:

\begin{thm}\label{thm:slsymp}\cite{BSCharacter}
	Assume that $g > 1$ and $(g,n) \neq (2,2)$. Then the varieties $\CharG(g,n)$ and $\CharS(g,n)$ are locally factorial with terminal singularities and hence do not admit proper symplectic resolutions. The same holds for any singular open subset. 
\end{thm}







Another very similar situation is that of moduli spaces of Higgs bundles on a genus $g$ curve. The symplectic singularities of these moduli spaces are considered by A. Tirelli in \cite{TirelliHiggs}. 

\subsection{Applications}

In joint work with A. Craw, the first author studies the symplectic resolutions of the symplectic quotient singularities $\C^{2n} / (\s_n \wr \Gamma)$, where $\Gamma \subset \mathrm{SL}(2,\C)$ is a finite group and $\s_n \wr \Gamma = \Gamma^n \rtimes \s_n$ is the associated wreath product. It is well-known that $\C^{2n} / (\s_n \wr \Gamma)$ is a quiver variety and symplectic resolutions of the quotient singularity can be realised using variation of GIT for quiver varieties. Using the results from this article, it is shown in \cite{BellamyCrawQuotient} that in fact all projective symplectic resolutions of the quotient singularity can be realised using quiver varieties. Moreover, one can say when stability parameters lying in different chambers give rise to the same symplectic resolutions. To prove these statements, it is crucial to have (a) the characterization of smooth quiver varieties given by Corollary \ref{cor:smoothminimal} (b) the classification of symplectic leaves given in Corollary \ref{cor:strataNakaconnected}; and (c) the local normal form given by Theorem \ref{thm:etalelocalgeneral}.   

More generally, in joint work \cite{BCSquiver} with A. Craw, we use results of this paper to give a complete classification of
projective symplectic resolutions of quiver varieties. 

In \cite{Craw-Hilbert}, the authors prove that symmetric powers of minimal resolutions of du Val singularities are also quiver varieties, for non-generic stability parameters on affine Dynkin quivers.  By Theorem \ref{thm:sympsing}, this implies that they are symplectic singularities. Our results in Section \ref{sec:hypertwist} are also employed in the proof of their main result.

\subsection{Other related work}

In joint work \cite{SchedlerTirelli} with Tirelli, the second author has used similar methods to give a classification of those multiplicative quiver varieties and character varieties of open Riemann surfaces that admit symplectic resolutions. Though the methods are similar, the situation considered in \cite{SchedlerTirelli} is considerably more complex that the additive case considered here (owing, for example, to the fact that it is unknown there when the varieties in question are nonempty).

Our classification explains which quiver varieties fall under the
general framework of Springer theory as recently developed by
McGerty--Nevins \cite{McGertyNevinsGalois}. Additionally, similar
questions to ours are analyzed there in greater detail for the Dynkin cases.
 
\subsection{Acknowledgments}

The first author was partially supported by EPSRC grant EP/N005058/1.
The second author was partially supported by NSF Grant DMS-1406553.
The authors are grateful to the University of Glasgow for the
hospitality provided during the workshop ``Symplectic representation
theory'', where part of this work was done, and the second author to
the 2015 Park City Mathematics Institute as well as to the Max Planck
Institute for Mathematics for excellent working environments.

We would like to thank
Yoshinori Namikawa and Nick
Proudfoot for useful discussions. We also thank Andrea Tirelli for
careful reading and useful comments, and Ben Davison and Johannes
Nicaise for useful comments on Tirelli's thesis, dealing with related
material.

We thank the referee for a careful reading of the paper and helpful suggestions.

\subsection{Notation and proof of the main results}

Throughout, a variety will mean a reduced, quasi-projective scheme of finite type over $\C$. If $X$ is a (quasi-projective) variety equipped with the action of a reductive algebraic group $G$, then $X \git \, G$ will denote the good quotient (when it exists). In this case, let $\xi: X \rightarrow X \git \, G$ denote the quotient map. Then each fibre $\xi^{-1}(x)$ contains a unique closed $G$-orbit. Following Luna, this closed orbit is denoted $T(x)$.  

The proof of the theorems and corollaries stated in the introduction can be found in the following sections. 
\begin{center}
\begin{tabular}{lcl}
Theorem \ref{thm:sympsing} & : & Section \ref{sec:thmsympsing} \\
Theorem \ref{thm:main0} & : & Section \ref{sec:Thm01proof} \\
Theorem \ref{thm:main1} & : & Section \ref{sec:Thm01proof} \\
Theorem \ref{thm:blowup22Q} & : & Section \ref{sec:blowup22Q-pf} \\
Theorem \ref{t:fact} & : & Section \ref{sec:localfact62} \\
Corollary \ref{cor:openfact} & : & Section \ref{sec:Thm01proof} \\ 
Theorem \ref{thm:stablesmooth} & : & Section \ref{sec:stablesmooth-pf} \\ 
Corollary \ref{cor:smoothminimal} & : & Section \ref{sec:smoothminimal-pf} \\
Theorem \ref{t:codim-two-strata} & : & Section \ref{sec:codim-two-strata-pf} \\
\end{tabular}
\end{center}




\section{Quiver varieties}\label{sec:quiver}

In this section we fix notation.  

\subsection{Notation}\label{sec:notationpreproj} Let $\N := \Z_{\geq 0}$. We work over $\C$ throughout. All quivers considered will have a finite number of vertices and arrows. We \tit{allow} $Q$ to have loops at vertices. Let $Q = (\Qo,\Qn)$ be a quiver, where $\Qo$ denotes the set of vertices and $\Qn$ denotes the set of arrows. Given $a \in \Qn$, let $a_s, a_t \in \Qo$ be the source and target, so $a: a_s \to a_t$.
For a dimension vector $\alpha \in \N^{\Qo}$, $\Rep(Q,\a) := \prod_{a \in \Qn} \Hom(\C^{\a_{a_s}}, \C^{\a_{a_t}})$ denotes the vector space of representations of $Q$ of dimension $\a$. The group $\G(\alpha) := \prod_{i \in \Qo} GL_{\a_i}(\C)$ acts on $\Rep(Q,\a)$; write $\g(\alpha) = \Lie \G(\alpha)$. The torus $\Cs$ in $\G(\a)$ of diagonal matrices acts trivially on $\Rep(Q,\a)$. Thus, the action factors through $\PG(\a) := \G(\a) / \Cs$. Let $\pg(\a) := \Lie \PG(\a) = \g(\a)/\C$.

Let $\dQ$ be the doubled quiver of $Q$, where for each arrow $a \colon i \to j$ of $Q$ we add a reverse arrow $a^* \colon j \to i$ to form $\dQ$. There is a natural identification $T^* \Rep(Q,\a) = \Rep (\dQ, \alpha)$. The group $\G(\a)$ acts symplectically on $\Rep (\dQ, \alpha)$ and the corresponding moment map is $\mu : \Rep (\dQ, \alpha) \rightarrow \g(\a)$, where we have identified $\g(\a)$ with its dual using the trace form. An element $\lambda \in \C^{Q_0}$ is identified with the tuple of scalar matrices $( \lambda_i \mathrm{Id}_{V_i})_{i \in Q_0} \in \g(\a)$. The affine quotient $\mu^{-1}(\lambda) / \! / \G(\a)$ parameterizes semi-simple representations of the \textit{deformed preprojective algebra} $\Pi^{\lambda}(Q) := \C \dQ / (\sum_{a \in Q_1} (a a^* - a^* a) - \sum_{i \in Q_0} \lambda_i p_i)$, where $p_i$ is the length-zero path at the vertex $i$. See \cite{CBmomap} for details. 

If $M$ is a finite dimensional $\Pi^{\lambda}(Q)$-module, then $\dim M$ will always denote the dimension \emph{vector} of $M$, and not just its total dimension. 

\subsection{Root systems} The coordinate vector at vertex $i$ is denoted $e_i$. The set $\N^{\Qo}$ of dimension vectors is partially ordered by
$\alpha \ge \beta$ if $\alpha_i \ge \beta_i$ for all $i$ and we say that
$\alpha > \beta$ if $\alpha \ge \beta$ with $\alpha \neq
\beta$. The \emph{support} of the vector $\a$ is the subquiver of $Q$ obtained by deleting all vertices $i \in Q_0$ where $\a_i = 0$. Following \cite[Section 8]{CBnormal},  $\alpha$ is called \emph{sincere} if $\alpha_i > 0$ for all $i$ i.e. the support of $\a$ equals $Q$. The Ringel form
on $\Z^{Q_0}$ is defined by
$$
\langle \alpha, \beta \rangle = \sum_{i \in Q_0} \alpha_i \beta_i - \sum_{a \in Q_1} \alpha_{t(a)} \beta_{h(a)}.
$$
Let $(\alpha,\beta) = \langle \alpha, \beta \rangle + \langle \beta,
\alpha \rangle$ denote the corresponding Euler form and set $p(\alpha) = 1 -
\langle \alpha, \a \rangle$. The fundamental region $\mc{F}(Q)$ is the set of $0 \neq \alpha \in
\N^{\Qo}$ with connected support and with $(\alpha, e_i) \le 0$ for all
$i$.

If $i$ is a loopfree vertex, so $p(e_i) = 0$, there is a reflection
$s_i : \Z^{\Qo} \ra \Z^{\Qo}$ defined by
$s_i \alpha = \alpha - (\alpha,e_i)e_i$. 
There is also the dual reflection, $r_i: \Z^{\Qo} \to \Z^{\Qo}, (r_i \lambda)_j = \lambda_j - (e_i, e_j) \lambda_i$.
The real roots (respectively
imaginary roots) are the elements of $\Z^{\Qo}$ which can be obtained
from the coordinate vector at a loopfree vertex (respectively $\pm$ an
element of the fundamental region) by applying some sequence of
reflections at loopfree vertices.  Let $R^+$ denote the set of
positive roots. Recall that a root $\beta$ is \textit{isotropic}
imaginary if $p(\beta) = 1$ (i.e., $(\beta,\beta)=0$) and
\textit{anisotropic} imaginary if $p(\beta) > 1$. Abusing terminology slightly, we will simply say that a root $\a$ is (a) real if $p(\alpha) = 0$, (b) isotropic if $p(\a) = 1$, and (c) anisotropic if $p(\a) > 1$. 


\subsection{The canonical decomposition}\label{sec:candec}

In this section we recall the \textit{canonical decomposition} defined by Crawley-Boevey (not to be confused with Kac's canonical decomposition). Fix $\lambda \in \C^{\Qo}$ and $\theta \in \Z^{Q_0}$. Then $R_{\lambda,\theta}^+ := \{ \alpha \in R^+ \ |
\ \lambda \cdot \alpha = \theta \cdot \a = 0 \}$. Following \cite{CBmomap}, we define
$$
\Sigma_{\lambda,\theta} = \left\{ \alpha \in R_{\lambda,\theta}^+ \ \left| \ p(\alpha) > \sum_{i
  = 1}^r p \left( \beta^{(i)} \right) \textrm{ for any decomposition } \right. \right.
$$
$$
\hspace{65mm} \left. \alpha =
\beta^{(1)} + \ds + \beta^{(r)} \textrm{ with } r \ge 2, \ \beta^{(i)}
\in R_{\lambda,\theta}^+ \right\}.
$$

\begin{example}\label{ex:realrootdecomp} Suppose that $\lambda=0=\theta$ and $\alpha \in \Sigma_{\lambda,\theta}$ is real, i.e., $p(\alpha)=0$. Then $\alpha$ is a coordinate vector. Indeed, if not, by definition there is a vertex $i \in \Qo$ such that $\alpha = s_i \alpha + ke_i$ with $k \geq 1$. Then $0 = p(\alpha) = p(s_i\alpha) + kp(e_i)$ contradicts the fact that $\alpha \in \Sigma_{0,0}$.
\end{example}
\begin{example} \label{ex:sigma00-ad} Again suppose that
  $\lambda=0=\theta$, and now assume that
  $\alpha \in \Sigma_{\lambda,\theta}$ is isotropic i.e.,
  $p(\alpha)=1$.  Then as observed in the proof of \cite[Proposition
  1.2.(2)]{CBdecomp}, $\alpha$ is supported on an affine Dynkin
  subquiver and there is the minimal imaginary root.  We repeat the
  argument for the reader's convenience. First, $\alpha$ is
  indivisible, since $\alpha=k\beta$ would imply
  $p(\alpha) < kp(\beta)$, and as $\beta$ is also a root, this
  contradicts the assumption $\alpha \in \Sigma_{0,0}$.  Next,
  $\alpha$ is in the fundamental region, since otherwise
  $\alpha = s_i \alpha + k e_i$ for some $i \in \Qo$ and $k \geq 1$,
  which implies $1=p(\alpha) = p(s_i \alpha) + kp(e_i)$, again
  contradicting the assumption that $\alpha \in \Sigma_{0,0}$.  Now
  the support of $\alpha$ is connected. Letting $Q'$ be its supporting
  quiver (i.e., the result of discarding all vertices not in the
  support and all incident arrows), we obtain a connected quiver for
  which $\alpha$ is in the kernel of the Cartan pairing.  By
  \cite[Lemma 1.9.(d)]{KacThm}, $Q'$ is affine (ADE) Dynkin and $\alpha$
  is an imaginary root. Since it is also
  indivisible, it is the
  minimal imaginary root $\delta$ of $Q'$.
\end{example}

In several places below, we choose a parameter $\nu \in \C^{Q_0}$ such that $R_{\lambda,\theta}^+ = R_{\nu}^+$ so that we can apply results of \cite{CBdecomp}, where the case $\theta = 0$ is considered. This is only for convenience, since the arguments of \cite{CBdecomp} can also be generalized directly to the context of the pair $(\theta,\lambda)$. Then \cite[Theorem 1.1]{CBdecomp} implies that

\begin{prop}
Let $\a \in \N R_{\lambda,\theta}^+$. Then $\alpha$ admits a unique decomposition $\a = n_1 \sigma^{(1)} + \cdots + n_k \sigma^{(k)}$ as a sum of element $\sigma^{(i)} \in \Sigma_{\lambda,\theta}$ such that any other decomposition of $\a$ as a sum of elements from $\Sigma_{\lambda,\theta}$ is a refinement of this decomposition. 
\end{prop}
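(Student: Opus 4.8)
The plan is to reduce the general case to the one where $\theta=0$, which is \cite[Theorem 1.1]{CBdecomp}; the only new feature is that $R^+_\lambda$ must be replaced throughout by $R^+_{\lambda,\theta}$. Let $W\subseteq\C^{Q_0}$ be the linear span of $R^+_{\lambda,\theta}$. By definition $\lambda$ and $\theta$ vanish on every element of $R^+_{\lambda,\theta}$, hence on all of $W$; consequently every positive root lying in $W$ already pairs to zero with both $\lambda$ and $\theta$, i.e.\ $W\cap R^+=R^+_{\lambda,\theta}$. Since $R^+$ is countable and $\C$ is uncountable, I can pick $\lambda'\in\C^{Q_0}$ lying in the annihilator of $W$ but outside each of the countably many proper subspaces $\{v:v\cdot\gamma=0\}$, $\gamma\in R^+\setminus W$ (or simply $\lambda'=0$ when $W=\C^{Q_0}$); then $R^+_{\lambda'}=R^+_{\lambda,\theta}$. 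As $\Sigma_{\lambda,\theta}$ depends only on the set $R^+_{\lambda,\theta}$ and the function $p$, we get $\Sigma_{\lambda'}=\Sigma_{\lambda,\theta}$, and the Proposition for $(\lambda,\theta)$ becomes literally \cite[Theorem 1.1]{CBdecomp} applied to $\lambda'$.

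Alternatively, as the footnote indicates, one can rerun Crawley-Boevey's argument verbatim with $R^+_{\lambda,\theta}$ in place of $R^+_\lambda$; the only inputs are that $R^+_{\lambda,\theta}$ is a set of positive roots, the identity $p\left(\sum_{i=1}^r\beta^{(i)}\right)=\sum_{i=1}^r p\left(\beta^{(i)}\right)-(r-1)-\sum_{i<j}\left(\beta^{(i)},\beta^{(j)}\right)$, and the nonnegativity of $p$ on roots. First I would show, by induction on the height $\sum_i\alpha_i$, that every $\alpha\in\N R^+_{\lambda,\theta}$ has at least one decomposition into elements of $\Sigma_{\lambda,\theta}$: starting from any expression of $\alpha$ as a sum of elements of $R^+_{\lambda,\theta}$, repeatedly replace a summand not lying in $\Sigma_{\lambda,\theta}$ by a proper refinement of it into elements of $R^+_{\lambda,\theta}$ (which exists, by the very definition of $\Sigma_{\lambda,\theta}$); this terminates because the heights of the summands strictly drop.

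For the refinement property and uniqueness I would argue as follows. If a $\Sigma_{\lambda,\theta}$-decomposition $D'$ strictly refines another one $D$, then the defining inequality for membership in $\Sigma_{\lambda,\theta}$ forces $P(D):=\sum_{\beta\in D}p(\beta)$ to strictly decrease, so refinement is a genuine partial order on the finite set of $\Sigma_{\lambda,\theta}$-decompositions of $\alpha$ (finite because each summand has height $\geq 1$). It then suffices to establish the key lemma: any two $\Sigma_{\lambda,\theta}$-decompositions of $\alpha$ admit a common coarsening which is again a $\Sigma_{\lambda,\theta}$-decomposition. Granting it, a common coarsening of all $\Sigma_{\lambda,\theta}$-decompositions of $\alpha$ exists (iterate, using transitivity of refinement), is itself a $\Sigma_{\lambda,\theta}$-decomposition, and is refined by every other one; grouping equal summands gives $\alpha=n_1\sigma^{(1)}+\cdots+n_k\sigma^{(k)}$, and uniqueness follows from antisymmetry of refinement.

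The genericity reduction, the inductive existence statement, and the poset/finiteness bookkeeping are all routine. The substantive point — and the step I expect to be the main obstacle — is the key lemma that two $\Sigma_{\lambda,\theta}$-decompositions always have a common coarsening within the class of $\Sigma_{\lambda,\theta}$-decompositions. This is the combinatorial heart of \cite{CBdecomp}, and is exactly where one must exploit the interplay between the quadratic form $p$ and the root system (that $p$ is nonnegative on roots and vanishes precisely on the real ones, and the behaviour of the Euler pairings $(\beta^{(i)},\beta^{(j)})$ under reflections).
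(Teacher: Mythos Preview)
Your first paragraph is exactly the paper's approach: choose $\lambda'\in\C^{Q_0}$ with $R^+_{\lambda'}=R^+_{\lambda,\theta}$ and invoke \cite[Theorem 1.1]{CBdecomp}; you simply supply the existence argument for $\lambda'$ that the paper leaves implicit. The alternative you sketch in the remaining paragraphs is precisely what the paper's footnote alludes to, so there is nothing to add.
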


As is apparent from the results stated in the introduction, indivisible roots in $\Sigma_{\lambda,\theta}$ play an important role in this paper. Occasionally it is useful to compare this with the condition of being $\Sigma$-indivisible, i.e., being indivisible in  $\Sigma_{\lambda,\theta}$:

\begin{thm}\label{thm:Sigmadivisiblevsdivisible}
	If $\alpha \in \Sigma_{\lambda,\theta}$ is imaginary, with $\alpha = m \beta$ for some indivisible root $\beta$, then one of the following hold:
	\begin{enumerate}
		\item[(a)] $\beta$ is isotropic and $m = 1$, 
		\item[(b)] $\beta$ is anisotropic and $\beta \in \Sigma_{\lambda,\theta}$; or
		\item[(c)] $\beta$ is anisotropic, $\beta \notin \Sigma_{\lambda,\theta}$ and $m > 1$ can be chosen arbitrarily.
                \end{enumerate}
The following converse to (b) holds: if $\beta \in \Sigma_{\lambda,\theta}$ is anisotropic, then $m\beta \in \Sigma_{\lambda,\theta}$ for all $m \geq 1$. 
\end{thm}

\begin{proof}
  Once again, choose once again $\nu \in \C^{Q_0}$ such that $R_{\lambda,\theta}^+ = R_{\nu}^+$ and let $\mc{F}_{\nu}$ be the ``relative fundamental domain'', as defined in \cite[\S 7]{CBmomap}. Then Theorem \ref{thm:Sigmadivisiblevsdivisible} follows from \cite[Theorem 8.1]{CBmomap} provided that $\alpha \in \mc{F}_{\lambda,\theta}$.  Namely, there it is described precisely the set $\mc{F}_{\nu} \setminus \Sigma_{\nu}$, which has a very special form, called types (I), (II), and (III). Type (I) is the isotropic case: namely the multiples by positive integers $m \geq 2$ of the imaginary root of an affine Dynkin  subquiver. They are divisible. Types (II) and (III) are indivisible, and anisotropic.

  If $\alpha$ is not in $\mc{F}_{\nu}$ then, by definition, there is a sequence of admissible reflections (whose product is $w$ say) mapping $\alpha$ to $w(\alpha) \in \mc{F}_{w(\nu)}$ (where $w(\nu)$ uses the action of dual reflections rather than reflections). Moreover, by \cite[Lemma 5.2]{CBmomap}, $w(\alpha)$ also belongs to $\Sigma_{w(\nu)}$. Thus, it suffices to note that if trichotomy of the theorem holds for $w(\alpha)$, then it also holds for the root $\alpha$.

  The final statement follows from  \cite[Proposition 1.2 (3)]{CBdecomp}. For the convenience of the reader we recall the proof, since it is closely related to the above.
 As we mentioned, the anisotropic cases (II) and (III) mentioned above are both indivisible. Thus every divisible anisotropic element of $\mc{F}_{\nu}$ is in $\Sigma_{\nu}$. So the above reductions imply the statement.
\end{proof}
\begin{cor}\label{c:ni=1} In the canonical decomposition \eqref{eq:alphadecomp},
  $n_i = 1$ if $\sigma^{(i)}$ is anisotropic.
  \end{cor}
  \begin{proof} This follows immediately from the final statement of Theorem \ref{thm:Sigmadivisiblevsdivisible}, by the definition of the canonical decomposition.
    \end{proof}
Notice that Theorem \ref{thm:Sigmadivisiblevsdivisible} says that if $\beta$ is an indivisible anisotropic root such that some multiple of $\beta$ belongs to $\Sigma_{\lambda,\theta}$, then every \textit{proper} multiple of $\beta$ belongs to $\Sigma_{\lambda,\theta}$. However, in some cases $\beta$ itself need not belong to $\Sigma_{\lambda,\theta}$. 




\subsection{Stability}\label{sec:stab}

Let $\theta \in \Z^{Q_0}$ be a stability condition. Given a representation $M$ of $\dQ$ (e.g., a module over $\Pi^{\lambda}(Q)$), let $\theta(M) := \theta \cdot \dim M$. Note that a representation $M$ of $\Pi^\lambda(Q)$ is the same as a point in the zero fiber $\mu^{-1}(\lambda)$.
%
Recall that a $\Pi^{\lambda}(Q)$-representation $M$ (hence also a
point in $\mu^{-1}(\lambda)$) such that $\theta(M) = 0$, is said to be
$\theta$-stable, respectively $\theta$-semistable, if
$\theta(M') < 0$, respectively $\theta(M') \le 0$, for all proper
nonzero subrepresentations $M'$ of $M$. A representation $M$ is said
to be $\theta$-polystable if $M = M_1 \oplus \cdots \oplus M_k$ with
$\theta(M_i) = 0$, such that each $M_i$ is $\theta$-stable. The set of
$\theta$-semistable points in $\mu^{-1}(\lambda)$ is denoted
$\mu^{-1}(\lambda)^{\theta}$. We define a partial order on $\Z^{Q_0}$
by setting $\theta' \ge \theta$ if $M$ $\theta'$-semistable implies
that $M$ is $\theta$-semistable, i.e.,
$$
\theta' \ge \theta \quad \Longleftrightarrow \quad
\mu^{-1}(\lambda)^{\theta'} \subset \mu^{-1}(\lambda)^{\theta}.
$$
The space $\Rep (\dQ, \alpha)$ has a natural Poisson structure. Since
the action of $\G(\alpha)$ on $\Rep (\dQ, \alpha)$ is Hamiltonian,
$$
\Nak{\lambda}{\alpha}{\theta} = \mu^{-1}(\lambda)^{\theta} \git
\G(\alpha) := \mathrm{Proj} \ \bigoplus_{k \ge 0}
\C\left[\mu^{-1}(\lambda)\right]^{k \theta}
$$
is a Poisson variety. 

\begin{lem}\label{lem:obvious}
  If $\theta' \ge \theta$, then there is a projective Poisson morphism
  $\Nak{\lambda}{\alpha}{\theta'} \rightarrow
  \Nak{\lambda}{\alpha}{\theta}$.
\end{lem}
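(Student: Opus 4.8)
The plan is to realise $f\colon\Nak{\lambda}{\a}{\theta'}\to\Nak{\lambda}{\a}{\theta}$ as the variation-of-GIT ``wall-crossing'' morphism, and then to check projectivity and Poisson-compatibility separately. \emph{Construction of $f$.} For any rational $\theta$ the quotient map $q_\theta\colon\mu^{-1}(\lambda)^{\theta}\to\Nak{\lambda}{\a}{\theta}$ is a good quotient for the $G(\a)$-action: it is affine, surjective, $G(\a)$-invariant, and a universal categorical quotient in the category of schemes. By definition, $\theta'\ge\theta$ means precisely that $\mu^{-1}(\lambda)^{\theta'}$ is a $G(\a)$-stable open subscheme of $\mu^{-1}(\lambda)^{\theta}$; write $\iota$ for this inclusion. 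Then $q_\theta\circ\iota$ is a $G(\a)$-invariant morphism, so by the categorical-quotient property of $q_{\theta'}$ it factors uniquely as $f\circ q_{\theta'}=q_\theta\circ\iota$ for a morphism $f$ as wanted. On closed points, $f$ sends the $\theta'$-polystable orbit representing a point of $\Nak{\lambda}{\a}{\theta'}$ to the $\theta$-polystable representative sitting inside the closure of its $G(\a)$-orbit in $\mu^{-1}(\lambda)^{\theta}$.

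\emph{Projectivity.} Applying the previous step with $\theta$ replaced by $0$ (every $\mu^{-1}(\lambda)^{\theta}$ lies in $\mu^{-1}(\lambda)=\mu^{-1}(\lambda)^{0}$) produces canonical morphisms $p_\theta\colon\Nak{\lambda}{\a}{\theta}\to\Nak{\lambda}{\a}{0}=:\mathfrak{M}_0=\Spec\C[\mu^{-1}(\lambda)]^{G(\a)}$, compatible in the sense that $p_\theta\circ f=p_{\theta'}$. From the presentation $\Nak{\lambda}{\a}{\theta}=\Proj\bigoplus_{k\ge0}\C[\mu^{-1}(\lambda)^{\theta}]^{k\theta}$ as $\Proj$ of a graded algebra finitely generated over its degree-zero piece $\C[\mu^{-1}(\lambda)]^{G(\a)}$, each $p_\theta$ is projective. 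Hence $p_{\theta'}$ is proper and $p_\theta$ is separated, so $f$ is proper; and $f$ is quasi-projective, since $\Nak{\lambda}{\a}{\theta'}$ is quasi-projective over $\C$. A proper quasi-projective morphism is projective, so $f$ is projective. (Alternatively, since $q_\theta$ is an affine morphism one can exhibit $\Nak{\lambda}{\a}{\theta'}$ directly as the relative $\Proj$ over $\Nak{\lambda}{\a}{\theta}$ of the quasi-coherent sheaf of graded algebras $\bigoplus_{k\ge0}\bigl(q_{\theta\ast}\mathcal{O}_{\mu^{-1}(\lambda)^{\theta}}\bigr)^{G(\a),\,k\theta'}$, which gives projectivity at once.)

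\emph{Poisson-compatibility, and the main point.} The Poisson structure on $\Nak{\lambda}{\a}{\theta}$ is the one obtained by identifying $\mathcal{O}_{\Nak{\lambda}{\a}{\theta}}$ with the $G(\a)$-invariants of $q_{\theta\ast}\mathcal{O}_{\mu^{-1}(\lambda)^{\theta}}$, equipped with the bracket restricted from the symplectic variety $\Rep(\dQ,\a)$, of which $\mu^{-1}(\lambda)$ is a Poisson subscheme because the $G(\a)$-action is Hamiltonian; thus $q_\theta$ is compatible with brackets in the sense that $\{q_\theta^{\sharp}h_1,q_\theta^{\sharp}h_2\}=q_\theta^{\sharp}\{h_1,h_2\}$ for local functions $h_i$. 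Since $\iota$ is an open immersion, hence a Poisson morphism, and $f\circ q_{\theta'}=q_\theta\circ\iota$, pulling back $h_1,h_2$ along these maps and using that $q_{\theta'}^{\sharp}$ is injective on invariant functions forces $\{f^{\sharp}h_1,f^{\sharp}h_2\}=f^{\sharp}\{h_1,h_2\}$, i.e.\ $f$ is Poisson. The whole statement is standard, and the only step genuinely deserving care — the reason one argues through GIT rather than merely on closed points — is the existence of $f$ as a morphism of schemes, which rests precisely on $q_{\theta'}$ being a categorical quotient of the affine scheme $\mu^{-1}(\lambda)$ for the linearisation given by $\theta'$.
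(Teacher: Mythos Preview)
Your proof is correct and follows essentially the same approach as the paper: construct the morphism from the $G(\alpha)$-equivariant open inclusion $\mu^{-1}(\lambda)^{\theta'}\hookrightarrow\mu^{-1}(\lambda)^{\theta}$, and verify projectivity via the graded-ring/relative $\Proj$ description (the paper does this locally over a basic open $(f\neq 0)$ of $\Nak{\lambda}{\alpha}{\theta}$, which is exactly your parenthetical ``alternative''). Your primary projectivity argument---factoring through $\Nak{\lambda}{\alpha}{0}$ and using proper $+$ quasi-projective $\Rightarrow$ projective---is a harmless variant, and your explicit Poisson check fills in what the paper simply declares ``clear''.
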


\begin{proof}
  By definition, we have a $\G(\alpha)$-equivariant embedding
  $\mu^{-1}(\lambda)^{\theta'} \hookrightarrow
  \mu^{-1}(\lambda)^{\theta}$. This induces a morphism
$$
\Nak{\lambda}{\alpha}{\theta'} = \mu^{-1}(\lambda)^{\theta'} \git
\G(\alpha) \longrightarrow \mu^{-1}(\lambda)^{\theta} \git \, \G(\alpha)
=\Nak{\lambda}{\alpha}{\theta},
$$
between geometric quotients. We need to show that this morphism is
projective. This is local on
$\Nak{\lambda}{\alpha}{\theta}$. Therefore we may choose $n \gg 0$ and
a $n \theta$-semi-invariant $f$ and consider the open subsets
$U \cap \mu^{-1}(\lambda)^{\theta'}$ and
$U \cap \mu^{-1}(\lambda)^{\theta}$, where
$U = (f \neq 0) \subset \Rep(\overline{Q},\alpha)$. Then
$\left(U \cap \mu^{-1}(\lambda)^{\theta}\right) \git \, \G(\alpha) =
\Spec \C \left[U \cap \mu^{-1}(\lambda)\right]^{\G(\alpha)}$
is an open subset of $\Nak{\lambda}{\alpha}{\theta}$ and
$$
\left(U \cap \mu^{-1}(\lambda)^{\theta'}\right) \git \, \G(\alpha) = \mathrm{Proj} \ \bigoplus_{k \ge 0} \C\left[U \cap \mu^{-1}(\lambda)\right]^{k \theta'}
$$
such that
$\left(U \cap \mu^{-1}(\lambda)^{\theta'}\right) \git \, \G(\alpha)
\rightarrow \left(U \cap \mu^{-1}(\lambda)\right) \git
\G(\alpha)$ is the projective morphism
$$
\mathrm{Proj} \ \bigoplus_{k \ge 0} \C\left[U \cap \mu^{-1}(\lambda)\right]^{k \theta'} \longrightarrow \Spec \C\left[U \cap \mu^{-1}(\lambda)\right]^{\G(\alpha)}. 
$$
It is clear that this morphism is Poisson. 
\end{proof} 

It follows from the proof of Lemma \ref{lem:obvious} that if
$\theta'' \ge \theta' \ge \theta$ then the projective morphism
$\Nak{\lambda}{\alpha}{\theta''} \rightarrow
\Nak{\lambda}{\alpha}{\theta}$
factors through $\Nak{\lambda}{\alpha}{\theta'}$. 

We will frequently use the fact that for each point $x \in \Nak{\lambda}{\alpha}{\theta}$, there is a unique closed $\G(\alpha)$-orbit in the fibre over $x$ of the quotient map $\xi : \mu^{-1}(\lambda)^{\theta} \rightarrow \Nak{\lambda}{\alpha}{\theta}$. Recall that this closed orbit is denoted $T(x)$.

\section{Canonical Decompositions of the Quiver Variety}\label{sec:canonical}

In this section we recall the canonical decomposition of quiver varieties described in \cite{CBdecomp}, and show that it holds in slightly greater generality than stated there.

\subsection{A stratification}\label{sec:strata} 

Let $x \in \Nak{\lambda}{\alpha}{\theta}$ be a closed point and $y \in T(x)$. Recall the following
basic fact:

\begin{prop}\cite[Proposition 3.2 (i)]{KingStable} \label{p:polystable}
	A point of a closed $\G(\alpha)$-orbit in
	$\mu^{-1}(\lambda)^{\theta}$ is a $\theta$-polystable representation.
\end{prop}
In more detail, \cite[Proposition 3.2 (ii)]{KingStable} states that
two points of $\mu^{-1}(\lambda)^{\theta}$ determine the same point of
$\Nak{\lambda}{\alpha}{\theta}$ if and only if the corresponding
representations admit filtrations whose associated graded subquotients
are isomorphic $\theta$-polystable representations.

Therefore $y$ decomposes into a direct sum
$y_1^{e_1} \oplus \cdots \oplus y_k^{e_k}$ of $\theta$-stable
representations, with multiplicity. Let $\beta^{(i)} = \dim y_i$. The
point $x$ is said to have \textit{representation type}
$\tau = (e_1, \beta^{(1)}; \ds ; e_k, \beta^{(k)})$. Associated to
this is the stabilizer group
$G_{\tau}=\G(\a)_y$, which is independent of the choice of $y$
up to  conjugation in $\G(\a)$.
Even though $\mu^{-1}(\lambda)^{\theta}$
is not generally affine, the fact that a nonzero morphism between
$\theta$-stable representations is an isomorphism implies:

\begin{lem}\label{lem:redstab}
	The group $G_{\tau}$ is reductive. 
\end{lem}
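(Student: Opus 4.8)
The plan is to exhibit $G_\tau$ as the group of units of a finite-dimensional semisimple algebra, from which reductivity is immediate. Concretely, let $x \in \mu^{-1}(\lambda)^\theta$ lie in the unique closed $G(\alpha)$-orbit mapping to the given point, and write $x \cong x_1^{e_1} \oplus \cdots \oplus x_k^{e_k}$ with the $x_i$ pairwise non-isomorphic $\theta$-stable representations of $\Pi^\lambda(Q)$. The stabilizer $G_\tau$ is the automorphism group of $x$ as a representation of $\Pi^\lambda(Q)$, i.e. $G_\tau = \mathrm{Aut}_{\Pi^\lambda(Q)}(x) = \bigl(\End_{\Pi^\lambda(Q)}(x)\bigr)^\times$.

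First I would compute $\End_{\Pi^\lambda(Q)}(x)$. Since each $x_i$ is $\theta$-stable with $\theta(x_i)=0$, any nonzero morphism $x_i \to x_j$ in the category of $\theta$-semistable representations of slope $0$ is an isomorphism: if $f \colon x_i \to x_j$ is nonzero then $\ker f$ is a proper subrepresentation of the stable object $x_i$, hence $\theta(\ker f) < 0$ unless $\ker f = 0$, and dually $\im f = x_j$; so $f$ is an isomorphism. Consequently $\Hom(x_i, x_j) = 0$ for $i \neq j$, and $\End(x_i)$ is a finite-dimensional division algebra over $\C$, hence equal to $\C$ since $\C$ is algebraically closed (Schur's lemma). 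Therefore
\[
\End_{\Pi^\lambda(Q)}(x) \;\cong\; \prod_{i=1}^{k} \Mat_{e_i}(\C),
\]
a product of matrix algebras.

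It then follows that $G_\tau \cong \prod_{i=1}^k \GL_{e_i}(\C)$, which is a reductive algebraic group. The one point requiring a word of care — and the only mild obstacle — is that $\mu^{-1}(\lambda)^\theta$ is not affine, so I should be careful that the stabilizer in $G(\alpha)$ of the chosen lift really is the full automorphism group in the abelian category of (finite-dimensional) $\Pi^\lambda(Q)$-modules, rather than some subgroup cut out by the semistability locus; but an element $g \in G(\alpha)$ fixing $x$ is by definition a $\Pi^\lambda(Q)$-module automorphism of the underlying representation, so the two notions coincide, and the decomposition $x \cong \bigoplus_i x_i^{e_i}$ into $\theta$-stable summands is exactly a decomposition into simple objects of the abelian category of $\theta$-semistable representations of slope $0$ (which is where the closedness of the orbit is used, via King's criterion). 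With this identification the computation above is purely algebraic and gives reductivity directly.
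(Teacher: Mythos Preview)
Your proof is correct and follows exactly the paper's approach: the paper simply notes that a nonzero morphism between $\theta$-stable representations is an isomorphism, and then states immediately after the lemma that $G_\tau \cong \prod_{i=1}^k \GL_{e_i}(\C)$. Your write-up fills in the details of this computation carefully, but the argument is the same.
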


In fact, it is isomorphic to $\prod_{i = 1}^k GL_{e_i}(\C)$. We denote
the conjugacy class of a closed subgroup $H$ of $\G(\a)$ by $(H)$.
Given a reductive subgroup $H$ of $\G(\a)$, let $\Nak{\lambda}{\alpha}{\theta}_{(H)}$ denote the set of points $x$
such that the stabilizer of any $y \in T(x)$ belongs to $(H)$. We order the conjugacy classes of reductive subgroups of $\G(\a)$ by $(H) \le (L)$ if and only if $L$ is conjugate to a subgroup of $H$.

\subsection{\'Etale local structure}\label{sec:etale}

In this section, we recall the \'etale local structure of $\Nak\lambda\a\theta$, as described in \cite[Section 4]{CBnormal}. Since it is assumed in \textit{op.~cit.} that $\theta = 0$, we provide some details to ensure the results are still applicable in this more general setting. Let $x, y, y_1,\ldots,y_k, \beta^{(1)},\ldots,\beta^{(k)}$, and $\tau$ be as in Section \ref{sec:strata}. Let $Q'$ be the quiver with $k$ vertices whose double has $2 p(\beta^{(i)})$ loops at vertex $i$ and $- (\beta^{(i)},\beta^{(j)})$ arrows from vertex $i$ to $j$. The $k$-tuple $\mathbf{e} = (e_1, \ds, e_k)$ defines a dimension vector for the quiver $Q'$.

If $X$ and $Y$ are Poisson varieties, then we say that there is a \'etale Poisson isomorphism between a neighborhood of $x \in X$ and $y \in Y$ if there exists a Poisson variety $Z$ and Poisson morphisms $Y \stackrel{\psi}{\longleftarrow} Z \stackrel{\phi}{\longrightarrow} X$ and $z \in Z$ such that $\phi(z) = x$,  $\psi(z) = y$ and both $\phi$ and $\psi$ are \'etale at $z$.

\begin{thm}\label{thm:etalelocalgeneral}
	There is an \'etale Poisson isomorphism between a neighborhood of $0$ in $\mu_{Q'}^{-1}(0) \git \, \G(\mathbf{e})$ and a neighborhood of $x \in \Nak\lambda{\alpha}{\theta}$. 
\end{thm}

The proof of Theorem \ref{thm:etalelocalgeneral} is given in section \ref{sec:thmetalelocalgeneral} below. By taking the completion $\widehat{\mathfrak{M}}_{\lambda}(\alpha,\theta)_{x}$ of $\Nak\lambda{\alpha}{\theta}$ at $x$ and the completion $\widehat{\mathfrak{M}}_{0}(\mathbf{e},0)_0$ of $\Nak{0}{\mathbf{e}}{0}$ at $0$, the formal analogue of Theorem \ref{thm:etalelocalgeneral} is:

\begin{cor}\label{cor:formnbd}
	There is an isomorphism of formal Poisson schemes $\widehat{\mathfrak{M}}_{\lambda}(\alpha,\theta)_{x}  \simeq \widehat{\mathfrak{M}}_{0}(\mathbf{e},0)_0$. 
\end{cor}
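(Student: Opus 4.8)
The statement to prove is Corollary~\ref{cor:formnbd}: namely that the completion of $\Nak{\lambda}{\alpha}{\theta}$ at $x$ is isomorphic, as a formal scheme, to the completion of $\Nak{0}{\mathbf{e}}{0}$ at $0$.

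\medskip

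The plan is to deduce this directly from the \'etale-local statement already established in Theorem~\ref{thm:etalelocalgeneral}. That theorem provides an \'etale isomorphism between a neighborhood of $0$ in $\mu_{Q'}^{-1}(0) /\!/ G(\mathbf{e})$ and a neighborhood of the image of $x$ in $\Nak{\lambda}{\alpha}{\theta}$; by definition this means there is a third variety $Z$ with maps $\mu_{Q'}^{-1}(0)/\!/G(\mathbf{e}) \xleftarrow{\psi} Z \xrightarrow{\phi} \Nak{\lambda}{\alpha}{\theta}$ and a point $z \in Z$ with $\phi(z) = x$, $\psi(z) = 0$, both \'etale at $z$. First I would recall that an \'etale morphism induces an isomorphism on complete local rings: if $\phi$ is \'etale at $z$ then $\widehat{\mc{O}}_{\Nak{\lambda}{\alpha}{\theta}, x} \cong \widehat{\mc{O}}_{Z, z}$, and likewise $\widehat{\mc{O}}_{Z,z} \cong \widehat{\mc{O}}_{\mu_{Q'}^{-1}(0)/\!/G(\mathbf{e}), 0}$ via $\psi$. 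Composing these two isomorphisms gives $\widehat{\mc{O}}_{\Nak{\lambda}{\alpha}{\theta}, x} \cong \widehat{\mc{O}}_{\mu_{Q'}^{-1}(0)/\!/G(\mathbf{e}), 0}$, which is exactly an isomorphism of the completed local rings, hence of the formal schemes $\widehat{\mathfrak{M}}_{\lambda}(\alpha,\theta)_x$ and the completion of $\mu_{Q'}^{-1}(0)/\!/G(\mathbf{e})$ at $0$.

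\medskip

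It then remains to identify $\mu_{Q'}^{-1}(0)/\!/G(\mathbf{e})$ with the quiver variety $\Nak{0}{\mathbf{e}}{0}$ for the quiver $Q'$. This is immediate from the definitions: $Q'$ is by construction the quiver on $k$ vertices whose doubled quiver $\overline{Q'}$ has $2p(\beta^{(i)})$ loops at vertex $i$ and $-(\beta^{(i)},\beta^{(j)})$ arrows between vertices $i$ and $j$, and $\mathbf{e} = (e_1,\dots,e_k)$; with zero stability parameter and zero deformation parameter, $\Nak{0}{\mathbf{e}}{0} = \mu_{Q'}^{-1}(0)/\!/G(\mathbf{e})$ directly from the definition in Section~\ref{sec:stab}. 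Taking the completion of both sides at $0$ yields $\widehat{\mathfrak{M}}_{0}(\mathbf{e},0)_0$, and stringing together the isomorphisms proves the corollary.

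\medskip

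I do not expect any serious obstacle here: the content is entirely in Theorem~\ref{thm:etalelocalgeneral}, and the corollary is a formal consequence. The only points requiring a word of care are (i) that \'etale morphisms are flat with the relevant fiber condition so that they do induce isomorphisms on completed local rings at the distinguished points (a standard fact, e.g.\ via \'{E}GA or Milne's \'etale cohomology notes), and (ii) bookkeeping to make sure the quiver $Q'$ and dimension vector $\mathbf{e}$ appearing in Theorem~\ref{thm:etalelocalgeneral} match those defining $\Nak{0}{\mathbf{e}}{0}$, which they do by construction. So the proof is short: invoke Theorem~\ref{thm:etalelocalgeneral}, pass to complete local rings, and unwind the definition of the quiver variety with trivial parameters.
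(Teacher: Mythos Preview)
Your proposal is correct and matches the paper's approach exactly: the paper does not give a separate proof but simply introduces the corollary as ``the formal analogue of Theorem~\ref{thm:etalelocalgeneral},'' i.e., pass to completions using that \'etale morphisms induce isomorphisms on complete local rings, together with the tautological identification $\mu_{Q'}^{-1}(0)/\!/G(\mathbf{e}) = \Nak{0}{\mathbf{e}}{0}$.
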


\begin{remark}\label{rem:etalep}
	An easy calculation shows that $p(\alpha) =p(\mathbf{e})$. It can also be deduced from the fact that $\dim \widehat{\mathfrak{M}}_{\lambda}(\alpha,\theta)_{x}  = \dim \widehat{\mathfrak{M}}_{0}(\mathbf{e},0)_0$. This fact will be useful later. 
\end{remark}


\subsection{The proof of Theorem \ref{thm:etalelocalgeneral}}\label{sec:thmetalelocalgeneral}

Fix $M = \Rep(\overline{Q},\alpha)$ and $G = \G(\alpha)$. Recall that
$M$ has a canonical $G$-invariant symplectic form $\omega$. Since
$y \in M^{\theta}$, there exists some $n > 0$ and
$n \theta$-semi-invariant function $\gamma$ such that
$\gamma(y) \neq 0$. We fix such a $\gamma$, and let
$M_{\gamma} \subset M^{\theta}$ be the \textit{affine} open subset of
$M$ defined by the non-vanishing of $\gamma$. Let $H := \G(\alpha)_y$
be the stabilizer of $y$ in $\G(\alpha)$ and $\mf{h}$ the Lie algebra
of $H$. Since $\mf{h}$ is reductive we can fix a $\mf{h}$-stable
complement $L$ to $\mf{h}$ in $\mf{g}$. By \cite[Lemma 4.1]{CBnormal},
the $H$-submodule $\mf{g} \cdot y \subset M$ is isotropic, and by
\cite[Corollary 2.3]{CBnormal}, there exists a coisotropic $H$-module
complement $C$ to $\mf{g} \cdot y$ in $M$. Let
$W = (\mf{g} \cdot y)^{\perp} \cap C$. The composition of
$\mu: M \rightarrow \mf{g}^*$ with the restriction map
$\mf{g}^* \rightarrow \mf{h}^*$ is denoted $\mu_H$. Notice that
$\mu_H$ is simply the moment map for the action of $H$ on $M$. The
restriction of $\mu_H$ to $W$ is denoted $\hat{\mu}$. There is a
natural identification of $W$ with $\Rep(\overline{Q}',\mathbf{e})$
such that $\hat{\mu} = \mu_{Q'}$.

\begin{lem}\label{lem:Gxred}
	The group $H$ is isomorphic to $\G(\mathbf{e})$ and $\theta |_{H}$ is the trivial character. 
\end{lem}

\begin{proof}
	The isomorphism $H \simeq \G(\mathbf{e})$ follows from the fact that $\Hom_{\Pi^{\lambda}(Q)}(M_1,M_2) = 0$ if $M_1$ and $M_2$ are non-isomorphic $\theta$-stable representations and $\End_{\Pi^{\lambda}(Q)}(M_i,M_i) = \C$. Under this identification, 
	$$
	\theta |_{\G(\mathbf{e})} = (\theta \cdot \beta^{(1)}, \ds, \theta \cdot
	\beta^{(k)}) = (0,\ds,0) \in \Z^{Q_0'}
	$$
	is the trivial stability condition. 
\end{proof}

As in \cite{CBnormal}, define $\nu : C \rightarrow L^*$ by 
$$
\nu(c) (l) = \omega(c,l \cdot y) + \omega(c,l \cdot c) + \omega(y,l \cdot c). 
$$
Theorem \ref{thm:etalelocalgeneral} follows from the following more precise result. 

\begin{thm}\label{thm:etalelocalgeneralgeneral}
	There exists a $G$-saturated affine open set $V \subset M^{\theta}$, and $H$-saturated affine open sets $Z \subset C$ and $U \subset \Rep(\overline{Q}',\mathbf{e})$ such that
	\begin{enumerate}
		\item[(a)] there are \'etale Poisson morphisms
		$$
		\phi : G \times_H Z \rightarrow V, \quad \psi :  Z \cap \nu^{-1}(0) \rightarrow U;
		$$
		\item[(b)] the morphisms $\phi$ and $\psi$ induce \'etale Poisson maps
		\begin{align*}
		(Z \cap \nu^{-1}(0) \cap \mu_H^{-1}(0)) \git H & \rightarrow (U \cap \hat{\mu}^{-1}(0))\git H, \\
		(\phi^* \mu)^{-1}(\lambda)^{\theta} \git \, G & \rightarrow (V \cap \mu^{-1}(\lambda)) \git \, G.
		\end{align*}
		\item[(c)] There is an isomorphism of \textit{Poisson} varieties, 
		$$
		\Phi : (Z \cap \nu^{-1}(0) \cap \mu_H^{-1}(0)) \git H \stackrel{\sim}{\longrightarrow} (\phi^* \mu)^{-1}(\lambda)^{\theta} \git \, G.
		$$
	\end{enumerate} 
\end{thm}

If we assume that $y \in \mu^{-1}(\lambda)$ then for $k \in \mf{h}$ and $l \in L$,  
\begin{align*}
\mu(y + c) (k+l) & = \lambda(k+l) + \nu(c)(l) + \mu_H(c)(k) + \omega(y,k \cdot c) - \omega(k \cdot y, c) \\
& = \lambda(k+l) + \nu(c)(l) + \mu_H(c)(k)
\end{align*}
because $k \cdot y = 0$. We define $\delta : C \rightarrow \C$ by $\delta(c) = \gamma(c + y)$. Then $\delta$ is $H$-invariant. We let $C_{\delta}$ denote the non-vanishing locus of $\delta$. Then 
\begin{equation}\label{eq:pullbackphiCheq}
\{ c \in C \ | \ c + y \in M_{\gamma} \cap \mu^{-1}(\lambda) \} = C_{\delta} \cap \mu_H^{-1}(0) \cap \nu^{-1}(0).
\end{equation}
Let $X = G \times_{H} C_{\delta}$. Since $M = C \oplus \mf{g} \cdot y$, the map $\phi: X \rightarrow M_{\gamma}$, $\phi(g,c) = g \cdot (c + y)$ is \'etale at $(1,0)$. We recall that a $G$-morphism $\phi: X \rightarrow Y$ is said to be \textit{excellent} if 
\begin{enumerate}
	\item[(a)] $\phi$ is \'etale.
	\item[(b)] The induced map $\phi/G : X\git \, G \rightarrow Y\git \, G$ is \'etale.
	\item[(c)] The morphism $X \rightarrow Y \times_{Y \git \, G} X\git \, G$ is an isomorphism. 
\end{enumerate}

\begin{lem}\label{lem:LunaonePoisson}
	There exists an affine, $H$-saturated open neighbourhood $Z$ of $0$ in $C_{\delta}$, such that $\phi$ restricts to an excellent Poisson morphism 
	$$
	\phi: G \times_H Z \rightarrow V := \mathrm{Im} \ \phi \subset M_{\gamma},
	$$
	inducing a \'etale Poisson morphism  
	$$
	(\phi^* \mu)^{-1}(\lambda) \git \, G \rightarrow (\mu^{-1}(\lambda) \cap V) \git \, G.
	$$
\end{lem}

\begin{proof}
	This is a direct consequence of Luna's Fundamental Lemma \cite{Luna}, together with the fact that every $G$-saturated affine open subset of $X$ is of the form $G \times_H Z$ for some $H$-saturated open subset of $C_{\delta}$. Since $\phi: G \times_H Z \rightarrow V$ is excellent, the form $\phi^* \omega$ on $X$ is symplectic, with moment map $\phi^* \mu$. In particular, \cite[Lemma 3.7]{SchwarzHamred} says that the corresponding \'etale morphism of Hamiltonian reductions $(\phi^* \mu)^{-1}(\lambda) \git \, G \rightarrow (\mu^{-1}(\lambda) \cap V) \git \, G$	is Poisson. 
\end{proof}

\begin{prop}\label{prop:morphispsiPoisson}
	There exist $H$-saturated open subsets $Z$ of $\nu^{-1}(0)$ and $U$ of $W$ such that the morphism 
	$$
	(\mu_H^{-1}(0) \cap Z) \git H \rightarrow (\hat{\mu}^{-1}(0) \cap U) \git  H
	$$
	is Poisson and \'etale. 
\end{prop}

\begin{proof}
	Let $\hat{\omega} = \omega |_W$. As in \cite[Lemma 4.3]{CBnormal}, $\hat{\omega}$ is a $H$-invariant symplectic form on $W$, with corresponding moment map $\hat{\mu}$. Write $p : C \rightarrow W$ for the projection map along $C^{\perp}$ and $\overline{p} : \nu^{-1}(0) \rightarrow W$ for the restriction of $p$ to $\nu^{-1}(0)$. We claim that $\overline{p}^* \hat{\omega} = \omega |_{\nu^{-1}(0)}$ and $\overline{p}^* \hat{\mu} = \mu_H |_{\nu^{-1}(0)}$. This follows, by definition, from $p^* \hat{\omega} = \omega |_{C}$ and $p^* \hat{\mu} = \mu_H |_{C}$. The latter two equalities can be checked by a direct computation. 
	
	By \cite[Lemma 4.5]{CBnormal}, the map $\nu$ is smooth at $0$ and $\omega |_{\nu^{-1}(0)}$ is non-degenerate at $0$ with moment map $\mu_H |_{\nu^{-1}(0)}$. Moreover, \textit{loc.~cit.} shows that the kernel of $d_0 \nu$ is $W$, thus $d_0 p : T_0 \nu^{-1}(0) \rightarrow T_0 W$ is the identity map. This implies that $\overline{p} : \nu^{-1}(0) \rightarrow W$ is \'etale at $0$. Applying Luna's Fundamental Lemma once again, we deduce that there are $H$-saturated affine open subset $Z \subset \nu^{-1}(0)$ and $U = \overline{p}(Z)$ such that $\overline{p} : Z \rightarrow U$ and $\overline{p}/H : Z \git H \rightarrow U \git H$ are \'etale. Since $\overline{p}^* \hat{\mu} = \mu_H |_{\nu^{-1}(0)}$, pulling back $\overline{p}/H$ along the closed embedding $\hat{\mu}^{-1}(0) \git H \rightarrow W \git H$ gives an \'etale morphism $(Z \cap \mu_H^{-1}(0)) \git H \rightarrow (U \cap \hat{\mu}^{-1}(0)) \git H$.   
	
	Shrinking $Z$ if necessary, we may assume that $\overline{p}^* \hat{\omega} = \omega |_{\nu^{-1}(0)}$ is non-degenerate on $Z$. Since $\overline{p}^* \hat{\mu} = \mu_H |_{\nu^{-1}(0)}$, it follows from \cite[Lemma 3.7]{SchwarzHamred} that the map $(Z \cap \mu_H^{-1}(0)) \git H \rightarrow (U \cap \hat{\mu}^{-1}(0)) \git H$ is Poisson. 
\end{proof}

	The $H$-equivariant closed embedding $j : \nu^{-1}(0) \cap C_{\delta} \hookrightarrow G \times_H C_{\delta}$ given by $j(c) = (1,c)$ induces an isomorphism 
	\begin{equation}\label{eq:lunaembeddingjiso}
	\Psi : (\mu^{-1}_H(0) \cap \nu^{-1}(0) \cap Z) \git H \iso (\phi^* \mu)^{-1}(\lambda) \git \, G.
	\end{equation}
	We will show later that this isomorphism is Poisson. Let $M_{(H)}$ be the set of points $m$ in $M^{\theta}$ such that 
\begin{enumerate}
	\item[(a)] $G \cdot m$ is closed in $M^{\theta}$; and
	\item[(b)] $G_m$ is conjugate to $H$. 
\end{enumerate}
If $V$ is a $G$-module, then $V_G$ denotes the complement to $V^G$.

\begin{lem}\label{lem:sheetclosedorbitsmooth}
	The set $M_{(H)}$ is a smooth locally closed subset of $M^{\theta}$ with 
	\begin{equation}\label{eq:tangentXsubH}
	T_m M_{(H)} = T_m M^H \oplus (\mf{g} / \mf{h})_H, \quad \forall \ m \in \left( M_{(H)} \right)^H. 
	\end{equation}	
\end{lem}

\begin{proof}
	To show that $M_{(H)}$ is locally closed, it suffices to prove that, for each $m \in M_{(H)}$, there exists some $G$-stable affine open neighbourhood $U$ of $m$ in $M^{\theta}$ such that $U \cap M_{(H)}$ is closed in $U$. By a result of Richardson, \cite[Proposition 3.3]{RichardsonPricipalOrbit}, the fact that all stabilizers are connected implies that there is a $G$-stable open set $U$ such that the stabilizer $G_u$ of each $u \in U$ is conjugate to a subgroup of $H$. In particular, we see that if $n = \dim H$ then $\dim G_u < n$ for all $u \in U \smallsetminus M_{(H)}$. Therefore, $U \cap M_{(H)} = \{ u \in U \ | \ \dim G_u \ge n \}$. This is closed by \cite[Lemma 2.2]{BorhoKraftSheets}. It will follow that $M_{(H)}$ is smooth if we can prove identity \eqref{eq:tangentXsubH}, since $M^H$ is smooth by \cite[Corollary 6.5]{PopovVinberg}. 
	
	In order to prove identity \eqref{eq:tangentXsubH}, we apply Luna's slice theorem \cite{Luna}. There exists an excellent map $\phi: G \times_H S \rightarrow U$, where $S$ is a slice to the $G$-orbit at $m$. Then, $\phi^{-1}(M_{(H)}) = G \times_H S_{(H)} = G / H \times S^H$. Thus, 
	$$
	d_{(1,m)} \phi : T_{(1,m)} \left(G / H \times S^H\right) \stackrel{\sim}{\longrightarrow} T_p M_{(H)}
	$$
	has image $T_m S^H \oplus \mf{g} \cdot m$ in $T_m M$, hence $T_m M_{(H)} = T_m S^H \oplus \mf{g} \cdot m$. Since $S^H \subset M^H \subset M_{(H)}$, we have $T_m S^H \subset T_m M^H \subset T_m M_{(H)}$ and hence 
	$$
	T_m M^H = (T_m M)^H = (T_p M_{(H)})^H = T_m S^H \oplus (\mf{g} \cdot m)^H. 
	$$
	Thus, 
	$$
	T_p M_{(H)} = T_m S^H \oplus (\mf{g} \cdot m)^H \oplus (\mf{g} \cdot m)_H = T_m M^H \oplus (\mf{g} / \mf{h})_H
	$$
	as required.        
\end{proof}

\begin{lem}
	The variety $\mu^{-1}(\lambda)^{\theta} \cap M_{(H)}$ is smooth, with 
	$$
	T_y \left(\mu^{-1}(\lambda)^{\theta} \cap M_{(H)}\right) = (M^H \cap (\mf{g} \cdot y)^{\perp}) \oplus (\mf{g} \cdot y)_H, 
	$$
	for all $y \in \mu^{-1}(\lambda)^{\theta} \cap (M_{(H)})^H$. 
\end{lem}

\begin{proof}
	Note that every point of $\mu^{-1}(\lambda)^{\theta} \cap M_{(H)}$ is conjugate by $G$ to some point in $\mu^{-1}(\lambda)^{\theta} \cap (M_{(H)})^H$. By Lemma \ref{lem:sheetclosedorbitsmooth}, we have 
	\begin{align*}
	T_y \left(\mu^{-1}(\lambda)^{\theta} \cap M_{(H)}\right) & = T_y M_{(H)} \cap \Ker d_y \mu, \\
	& = (M^H \oplus \mf{g} \cdot y ) \cap (\mf{g} \cdot y)^{\perp} \\
	& = (M^H \cap (\mf{g} \cdot y)^{\perp}) \oplus (\mf{g} \cdot y)_H
	\end{align*}
	since $\mf{g} \cdot y \subset (\mf{g} \cdot y)^{\perp}$ is isotropic. Therefore, we just need to show that the dimension of $\mu^{-1}(\lambda)^{\theta} \cap M_{(H)}$, as a reduced variety, is also equal to $\dim ((M^H \cap (\mf{g} \cdot y)^{\perp}) \oplus (\mf{g} \cdot y)_H)$. We have 
	$$
	(\phi^* \mu)^{-1}(\lambda) \cap (G \times_H C)_{(H)} = G \times_H (\nu^{-1}(0) \cap \mu_H^{-1}(0))_{(H)}.
	$$
	Set-theoretically, this equals $G / H \times \nu^{-1}(0)^H$ (which is smooth) and there is an \'etale map from this space to $G / H \times W^H$. Thus, we just need to show that 
	$$
	\dim ((M^H \cap (\mf{g} \cdot y)^{\perp}) \oplus (\mf{g} \cdot y)_H) =  \dim G / H \times \nu^{-1}(0)^H. 
	$$
	If $M = C \oplus (\mf{g} \cdot y)$, then $M^H = C^H \oplus(\mf{g} \cdot y)^H$. The fact that $W = C \cap (\mf{g} \cdot y)^{\perp}$ implies that 
	$$
	C^H \cap (\mf{g} \cdot y)^{\perp} = \left( C \cap (\mf{g} \cdot y)^{\perp} \right)^H =W^H. 
	$$
	Thus, 
	\begin{align*}
	\dim ((M^H \cap (\mf{g} \cdot y)^{\perp}) \oplus (\mf{g} \cdot y)_H) & = \dim C^H \cap (\mf{g} \cdot y)^{\perp} + \dim (\mf{g} \cdot y)^H + \dim (\mf{g} \cdot y)_H \\
	& = \dim W^H + \dim (\mf{g} \cdot y) \\
	& = \dim G / H \times \nu^{-1}(0)^H
	\end{align*}
	as required. 
\end{proof}

\begin{thm}\label{thm:symponstratum}
	There exists a unique symplectic form $\omega_H$ on $\Nak{\lambda}{\alpha}{\theta}_{(H)}$ such that 
	$$
	\pi^* \omega_H = \omega |_{\mu^{-1}(\lambda)^{\theta} \cap M_{(H)}},
	$$
	where $\pi: \mu^{-1}(\lambda)^{\theta} \cap M_{(H)} \rightarrow \Nak{\lambda}{\alpha}{\theta}_{(H)}$ is the quotient map. 
\end{thm}

\begin{proof}
	For brevity, let $Y =\mu^{-1}(\lambda)^{\theta} \cap M_{(H)} \cap V$, where $V$ is the affine open set of Lemma \ref{lem:LunaonePoisson}, and set $\mf{M} = \Nak{\lambda}{\alpha}{\theta}$. Abusing notation, we will also write $\mf{M} \cap V$ for the affine open subset $(V \cap \mu^{-1}(\lambda)) \git \, G$ of $\mf{M}$. We claim that we have a commutative diagram of linear maps
	$$
	\begin{tikzcd}
	\ar[r,"\sim"] W^H \oplus \mf{g}/\mf{h} \ar[d]  & T_y Y \ar[d,"d_y \pi"]  \\
	W^H  \ar[r,"\sim"] & T_y \mf{M}_{(H)},
	\end{tikzcd}
	$$
	where the vertical map on the left is just projection.
	
	Since $\phi$ is excellent, we have an identification 
	$$
	\phi^{-1}(Y) = G / H \times (\nu^{-1}(0)^H \cap U),
	$$
	which means that the diagram 
	$$
	\begin{tikzcd}
	G / H \times \nu^{-1}(0)^H \ar[r,"\phi"] \ar[d,"{\eta}"'] & Y \ar[d,"\pi"] \\
	\left(G / H \times \nu^{-1}(0)^H \right) \git \, G \ar[r,"\phi/G"] & \mf{M}_{(H)}
	\end{tikzcd}
	$$
	commutes, with $\phi$ and $\phi/G$ being \'etale. Under the identification $T_0 \nu^{-1}(0)^H = W^H$, the differential map $d \eta :  T_0 \nu^{-1}(0)^H \oplus \mf{g} / \mf{h} \rightarrow T_0 \nu^{-1}(0)^H$ is the projection map $W^H \oplus \mf{g}/\mf{h}  \rightarrow W^H$, as required. 
	
	We deduce that $\pi$ is a smooth morphism on $Y$. Hence $\pi^* : \Omega^2_{(\mf{M} \cap V)_{(H)}} \rightarrow \Omega^2_{Y}$ is an embedding, with image $\left( \Omega^2_{Y} \right)^G$. Thus, there is a unique (closed) $2$-form $\omega_H$ on $(\mf{M} \cap V)_{(H)}$, whose pull-back along $\pi$ equals $\omega|_{Y}$. 
	
	Finally, to prove that $\omega_H$ is symplectic it suffices to prove that the radical of $\omega|_{Y}$ at $m$ equals $\mf{g}/ \mf{h}$. Clearly the latter is contained in the former. Since $T_m Y = W^H \oplus (\mf{g} / \mf{h})$, it suffices to show that $\omega |_{W^H}$ is non-degenerate. Recall that $\hat{\omega} = \omega |_W$ is non-degenerate. Then $W^H$ is a symplectic subspace since $\hat{\omega}$ is $H$-invariant.  	
\end{proof}

Next, we show that the symplectic forms $\omega_H$ come from the Poisson structure on $\mu^{-1}(\lambda)^{\theta}/\!/G$. 

\begin{lem}\label{lem:tangeHamvectstratum}
	For each $f \in \C[V]^G$, the Hamiltonian vector field $\zeta_f$ is tangent to $M_{(H)}$. 
\end{lem}

\begin{proof}
	 By Lemma \ref{lem:sheetclosedorbitsmooth}, $M_{(H)}$ is smooth, therefore it suffices to show that $(\zeta_f)_y \in T_y M_{(H)}$ for all $y \in (M_{(H)})^H$. Recall from Lemma \ref{lem:sheetclosedorbitsmooth} that $T_y M_{(H)} = M^H \oplus (\mf{g}/ \mf{h})_H$. The canonical map $\mathrm{Der}(V) \rightarrow T_y M_{(H)}$ is $H$-equivariant. Since $\{ - , - \}$ is $G$-invariant, and $f \in \C[V]^G$, the Hamiltonian vector field $\zeta_f$ belongs to $\Der(V)^G \subset \Der(V)^H$. Hence $(\zeta_f)_y \in (T_y M)^H = M^H \subset T_y M_{(H)}$, as required. 
\end{proof}

\begin{thm}\label{thm:stratsymp}
	The space $\Nak{\lambda}{\alpha}{\theta}_{(H)}$ is a locally closed Poisson subvariety, such that the restriction $\{ - , - \} |_{\Nak{\lambda}{\alpha}{\theta}_{(H)}}$ of the Poisson bracket on $\Nak{\lambda}{\alpha}{\theta}$ equals the Poisson structure induced by $\omega_H$. In particular, it is non-degenerate.
\end{thm}

\begin{proof}
	Again, let $\mf{M} = \Nak{\lambda}{\alpha}{\theta}$. First we show that it is a Poisson subvariety. It suffices to show that each Hamiltonian vector field $\zeta_{\bar{f}}$ on $\mf{M} \cap V$ is tangent to $(\mf{M} \cap V)_{(H)}$. Let $f \in \C[V]^G$ be a lift of $\bar{f}$. Then, by Lemma \ref{lem:tangeHamvectstratum}, $\zeta_f$ is tangent to $M_{(H)} \cap \mu^{-1}(\lambda)^{\theta}$. By definition of Hamiltonian reduction, $\zeta_f$ is also tangent to $V \cap \mu^{-1}(\lambda)^{\theta}$. Therefore, it descends to the vector field $\zeta_{\bar{f}}$ on $\mf{M}$, which is tangent to $(M_{(H)} \cap V \cap \mu^{-1}(\lambda)^{\theta}) \git \, G$. But, by Theorem \ref{thm:symponstratum}, 
	$$
	(M_{(H)} \cap V \cap \mu^{-1}(\lambda)^{\theta}) \git \, G = (\mf{M} \cap V)_{(H)},
	$$
	as required. 
	
	Next, we show that the two Poisson structures agree. Once again, we let $Y = V \cap M_{(H)} \cap \mu^{-1}(\lambda)^{\theta}$, and let $\pi : Y \rightarrow (V \cap \mf{M})_{(H)}$ be the quotient map. 
	
	Choose a function $\bar{f}$ defined on $(V \cap \mf{M})_{(H)}$ and denote by the same symbol an arbitrary lift to $V \cap \mf{M}$. Since the form $\omega_H$ is non-degenerate on $(V \cap \mf{M})_{(H)}$ there exists a Hamiltonian vector field $\zeta_{\bar{f}}'$ on $(V \cap \mf{M})_{(H)}$ satisfying the defining equation $\omega_H(\zeta_{\bar{f}}',\eta) = - \eta(\bar{f})$ for all vector fields $\eta$. The non-degeneracy of $\omega_H$ implies that it suffices to prove that $\omega_H(\zeta_{\bar{f}},\eta) = \omega_H(\zeta_{\bar{f}}',\eta)$ for all $\eta$, since $\zeta_{\bar{f}} =\zeta_{\bar{f}}'$ implies that $\{ \bar{f}, g\} = \{ \bar{f},g\}'$ for all functions $g$ on $(V \cap \mf{M})_{(H)}$. Thus, we must show that $\omega_H(\zeta_{\bar{f}},\eta) = - \eta(\bar{f})$. 
	
	Since the quotient map $\pi : Y \rightarrow (V \cap \mf{M})_{(H)}$ is smooth, we can choose a lift of $\eta$. In fact, if we ask that the lift be $G$-invariant, it is unique, and so we will denote it by $\eta$ too. If $f$ is a lift of $\bar{f}$ to $\C[V]^G$, then $\zeta_f$ is tangent to $Y$, and $\zeta_f |_{Y}$ is a lift of $\zeta_{\bar{f}}$. Therefore,  
	$$
	\omega_H(\zeta_{\bar{f}},\eta) = \pi^* \omega_H( \zeta_f |_{Y}, \eta) = \omega|_Y(\zeta_f |_Y, \eta).
	$$
	Finally, if we choose an arbitrary lift $\eta'$ of $\eta$ to $V$, then 
	$$
	\omega|_Y(\zeta_f |_Y, \eta) = \omega (\zeta_f, \eta') |_Y = - \eta'(f)|_Y = - \eta(f |_Y) = - \eta(\bar{f}). \qedhere
	$$
\end{proof}  

Finally, we complete the proof of Theorem \ref{thm:etalelocalgeneralgeneral}. 

\begin{proof}[Proof of Theorem \ref{thm:etalelocalgeneralgeneral}]
	All claims, except for the final one, follow from Lemma \ref{lem:LunaonePoisson} and Proposition \ref{prop:morphispsiPoisson}. Thus, it suffices to note that the isomorphism $\Psi$ of \eqref{eq:lunaembeddingjiso} is Poisson. Choose a generic point $n$ in 
	$$
	(Z \cap \nu^{-1}(0) \cap \mu_H^{-1}(0)) \git  H \simeq (\phi^* \mu)^{-1}(\lambda)^{\theta} \git \, G.
	$$
	Then there exists some $K \subset H$ such that $n \in ((Z \cap \nu^{-1}(0) \cap \mu_H^{-1}(0)) \git H)_{(K)}$. Both Poisson structures on this open stratum are non-degenerate. Therefore, it suffices to show that the corresponding symplectic $2$-forms agree via $\Psi$. Recall that the symplectic form on $((Z \cap \nu^{-1}(0) \cap \mu_H^{-1}(0)) \git H)_{(K)}$ is the unique form such that its pull-back to $Z_{(K)} \cap \nu^{-1}(0) \cap \mu_H^{-1}(0)$ agrees with $\omega |_{Z_{(K)} \cap \nu^{-1}(0) \cap \mu_H^{-1}(0)}$. Similarly, the symplectic form on $((\phi^* \mu)^{-1}(\lambda)^{\theta} \git \, G)_{(K)}$ is the unique symplectic form whose pull-back to $D := (G \times_H Z_{(K)}) \cap (\phi^* \mu)^{-1}(\lambda)^{\theta}$ equals $(\phi^* \omega) |_{D}$. Therefore, since the map $\Psi$ is induced by the closed embedding $j$, it suffices to show that 
	$$
	j^* ((\phi^* \omega) |_{D}) = \omega |_{Z_{(K)} \cap \nu^{-1}(0) \cap \mu_H^{-1}(0)}.
	$$
	But this follows from the fact that 
	$$
	D = \phi^{-1}(V_{(K)} \cap \mu^{-1}(\lambda)^{\theta}), \quad j^{-1}(D) = Z_{(K)} \cap \nu^{-1}(0) \cap \mu_H^{-1}(0), 
	$$
	and $\phi \circ j$ is the map $c \mapsto c + m$, so that $j^* \phi^* \omega = \omega |_{\nu^{-1} (0) \cap C_{\delta}}$, since $\omega$ is invariant under translation. 
\end{proof}

The following result is an important consequence of Theorem~\ref{thm:stratsymp}. 

\begin{prop}\label{prop:strata}
	The strata $\Nak{\lambda}{\alpha}{\theta}_{\tau} := \Nak{\lambda}{\alpha}{\theta}_{(G_\tau)}$ define a finite stratification of $\Nak{\lambda}{\alpha}{\theta}$ into locally closed subsets such that 
	$$
	\Nak{\lambda}{\alpha}{\theta}_{(H)} \subset \overline{\Nak{\lambda}{\alpha}{\theta}_{(L)}} \quad \Leftrightarrow \quad (H) \le (L). 
	$$
	Moreover, the connected components of the strata are precisely the symplectic leaves of $\Nak{\lambda}{\alpha}{\theta}$, with respect to its natural Poisson bracket. 
\end{prop}

\begin{proof}
	It is well-known that the stratification of $\Rep(Q,\alpha)^{\theta}\git \, \G(\alpha)$ by stabilizer type is finite, with smooth locally closed strata. Therefore the stratification $\{ \Nak{\lambda}{\alpha}{\theta}_{\tau} \}$ of $\Nak{\lambda}{\alpha}{\theta}$ is finite with locally closed strata. Thus it suffices to show that (a) each stratum is smooth, and (b) the Poisson structure is non-degenerate on each stratum. In fact, (b) implies (a), and both statements are implied by Theorem~\ref{thm:stratsymp}. 
\end{proof}

We will show in Corollary~\ref{cor:strataNakaconnected} that each stratum $\Nak{\lambda}{\alpha}{\theta}_{\tau}$ is connected.

\subsection{Hyperk\"ahler twisting}\label{sec:hypertwist}

Let $\alpha = m_1 \nu^{(1)} + \cdots + m_t \nu^{(t)}$ be the canonical decomposition of $\alpha$ with respect to $\Sigma_{\lambda}$. It is shown in \cite{CBdecomp} that

\begin{thm}\label{thm:CBdecomp}\cite{CBdecomp}
There is an isomorphism of  varieties $\prod_i   S^{m_i} \left( \Nak{\lambda}{\nu^{(i)}}{0} \right) \simeq \Nak\lambda\a 0$.
\end{thm}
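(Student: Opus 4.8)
Since Theorem \ref{thm:CBdecomp} with $\theta=0$ is precisely \cite[Theorem 1.1]{CBdecomp}, my plan is to recapitulate Crawley-Boevey's argument, as we will need its mechanism later. Recall that for the trivial stability parameter $\Nak\lambda\beta0=\mu^{-1}_\beta(\lambda)/\!/G(\beta)$ is the affine GIT quotient, whose closed points are in bijection with isomorphism classes of semisimple $\Pi^{\lambda}(Q)$-modules of dimension vector $\beta$, and that by Crawley-Boevey's classification of simple representations of deformed preprojective algebras \cite{CBmomap} the set $\Sigma_\lambda$ is exactly the set of dimension vectors of simple $\Pi^{\lambda}(Q)$-modules; in particular $\lambda\cdot\nu^{(i)}=0$ for each $i$.

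First I would construct the comparison morphism. The block-diagonal inclusions $\prod_i\Rep(\dQ,\nu^{(i)})^{\times m_i}\hookrightarrow\Rep(\dQ,\a)$ and $\prod_i G(\nu^{(i)})^{\times m_i}\hookrightarrow G(\a)$ are compatible with the moment maps, hence restrict to a $\big(\prod_i G(\nu^{(i)})^{\times m_i}\big)$-equivariant map $\prod_i\big(\mu^{-1}_{\nu^{(i)}}(\lambda)\big)^{\times m_i}\to\mu^{-1}_\a(\lambda)$; composing with the quotient map to $\Nak\lambda\a0$ gives a morphism invariant for the smaller group, hence one out of $\prod_i\big(\Nak\lambda{\nu^{(i)}}0\big)^{\times m_i}$, and this is in turn invariant under the product of symmetric groups permuting like factors, so it descends to
$$\Phi\colon\ \prod_i S^{m_i}\Nak\lambda{\nu^{(i)}}0\ \longrightarrow\ \Nak\lambda\a0,$$
sending (on closed points) a tuple of semisimple modules $(M_{i,j})$ to $\bigoplus_{i,j}M_{i,j}$.

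Next I would show $\Phi$ is bijective and quasi-finite, hence birational. For surjectivity: a semisimple module $M$ of dimension $\a$ decomposes as $\bigoplus_l S_l^{\oplus c_l}$ into simples, yielding a decomposition $\a=\sum_l c_l\dim S_l$ into elements of $\Sigma_\lambda$; by the defining property of the canonical decomposition this is a refinement of $\a=\sum_i m_i\nu^{(i)}$, so the multiset of the $\dim S_l$ partitions into groups summing to the various $\nu^{(i)}$, each $\nu^{(i)}$ occurring as a group-sum exactly $m_i$ times, and regrouping the simple summands of $M$ correspondingly produces a preimage. Quasi-finiteness is immediate from Krull--Schmidt, since a semisimple module has only finitely many direct-sum decompositions up to isomorphism. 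Finally, on the dense open locus of the source where every $M_{i,j}$ is simple, the $M_{i,j}$ are precisely the simple summands of $M$, and as the $\nu^{(i)}$ are pairwise distinct the index $i$ is recovered from the dimension vector; hence $\Phi$ is injective there, so $\Phi$ is birational, the source being irreducible as a product of symmetric powers of the irreducible varieties $\Nak\lambda{\nu^{(i)}}0$ (Proposition \ref{prop:sympsing}).

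To conclude I would invoke Zariski's Main Theorem: $\Phi$ is quasi-finite, separated and of finite type, so it factors as an open immersion into a scheme finite over $\Nak\lambda\a0$; since $\Phi$ is birational this finite morphism is birational, and since $\Nak\lambda\a0$ is normal (Proposition \ref{prop:sympsing}) a finite birational morphism onto it is an isomorphism. Hence $\Phi$ is an open immersion, and being surjective it is an isomorphism. The main obstacle is the surjectivity step: it requires pinning down the exact combinatorial meaning of ``refinement'' for the canonical decomposition and combining it with the identification of $\Sigma_\lambda$ with the dimension vectors of simple $\Pi^{\lambda}(Q)$-modules --- this is where the genuine content of \cite{CBdecomp} sits, whereas the construction of $\Phi$ and the normality input are routine given the results already recorded above.
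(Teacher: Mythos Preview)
The paper does not give its own proof of this statement; it simply records the result as \cite[Theorem 1.1]{CBdecomp}. Your recapitulation is essentially the argument Crawley-Boevey uses: construct the direct-sum map, establish surjectivity from the refinement property of the canonical decomposition together with the identification $\Sigma_\lambda=\{\dim S:S\text{ simple over }\Pi^\lambda(Q)\}$ from \cite{CBmomap}, and conclude by normality of the target. Your use of Zariski's Main Theorem (quasi-finite $+$ birational $+$ surjective $+$ normal target $\Rightarrow$ isomorphism) is a clean way to finish; note that it is this step, not a direct injectivity check, that actually delivers the ``bijective'' you announce at the start of your second paragraph.

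One genuine issue of logical hygiene: you invoke Proposition \ref{prop:sympsing} for the irreducibility of $\Nak\lambda{\nu^{(i)}}0$ and the normality of $\Nak\lambda\a0$. In the paper's development, Proposition \ref{prop:sympsing} is proved in \S\ref{sec:propsympsing} \emph{using} the decomposition theorem (via Theorem \ref{thm:decompCB2}, which in turn rests on Theorem \ref{thm:CBdecomp}), so citing it here is circular. The fix is immediate: for $\nu^{(i)}\in\Sigma_\lambda$ the scheme $\mu^{-1}_{\nu^{(i)}}(\lambda)$ is reduced and irreducible by \cite[Theorem 1.2]{CBmomap}, whence $\Nak\lambda{\nu^{(i)}}0$ is irreducible; and normality of $\Nak\lambda\a0$ is \cite[Theorem 1.1]{CBnormal}. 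With these references substituted, your argument stands.
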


Moreover, if $\nu^{(i)}$ is real then $S^{m_i} \left( \Nak{\nu^{(i)}}{\lambda}{0} \right) = \{ \mathrm{pt} \}$ and if $\nu^{(i)}$ is anisotropic then $m_i = 1$. We now adapt Crawley-Boevey's result to the case where $\theta \neq 0$:


\begin{thm}\label{thm:decompCB2}
Let $\a = n_1 \sigma^{(1)} + \cdots + n_k \sigma^{(k)}$ be the canonical decomposition of $\alpha$ with respect to $\Sigma_{\lambda,\theta}$. Then, there is an isomorphism of \textbf{Poisson} varieties
$$
\phi : \ \prod_i   S^{n_i} \left( \Nak{\lambda}{\sigma^{(i)}}{\theta} \right) \stackrel{\sim}{\longrightarrow} \Nak\lambda\a\theta. 
$$
\end{thm}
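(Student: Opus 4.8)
The plan is to upgrade the existing isomorphism of varieties (Theorem~\ref{thm:CBdecomp}, which handles $\theta = 0$ and is due to Crawley-Boevey) to the Poisson, $\theta$-twisted setting, by reducing the $\theta \ne 0$ case to the affine case via the projective morphism of Lemma~\ref{lem:obvious} and checking compatibility with the stratified symplectic structures. First I would choose $\lambda'$ as in \S\ref{ss:candec} so that $R^+_{\lambda,\theta} = R^+_{\lambda'}$, so that $\Sigma_{\lambda,\theta} = \Sigma_{\lambda'}$ and the canonical decomposition of $\alpha$ with respect to $\Sigma_{\lambda,\theta}$ agrees with the one in the affine setting; then Crawley-Boevey's construction (as generalized in \S\ref{sec:canonical}) gives a morphism $\phi : \prod_i S^{n_i}(\Nak{\lambda}{\sigma^{(i)}}{\theta}) \to \Nak{\lambda}{\alpha}{\theta}$ at the level of sets/varieties, since on each factor $\theta$ pairs to zero with $\sigma^{(i)}$ and the GIT construction is compatible with the decomposition.

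The key steps, in order: (1) Construct $\phi$. The symmetric product $S^{n_i}(\Nak{\lambda}{\sigma^{(i)}}{\theta})$ parametrizes unordered tuples of $\theta$-polystable representations of dimension vector $\sigma^{(i)}$, and taking direct sums gives a map to $\theta$-polystable representations of dimension $\alpha$, i.e.\ to $\Nak{\lambda}{\alpha}{\theta}$. I would need to check this is a morphism of varieties (not just on points) — this follows because it comes from a $G(\alpha)$-equivariant morphism at the level of $\mu^{-1}(\lambda)^\theta$, by the same argument as in \cite{CBdecomp}. (2) Show $\phi$ is an isomorphism of varieties. Rather than redo Crawley-Boevey's argument, I would argue that $\phi$ fits into a commutative square with the affine projective morphisms $\Nak{\lambda}{\bullet}{\theta} \to \Nak{\lambda}{\bullet}{0}$ of Lemma~\ref{lem:obvious}; the bottom map is the known affine isomorphism, and I would use the \'etale-local description of Theorem~\ref{thm:etalelocalgeneral} / Corollary~\ref{cor:formnbd} together with the fact that $\phi$ is bijective on closed points and both sides are normal to conclude $\phi$ is an isomorphism (Zariski's main theorem / the étale-local models match up by construction). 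Alternatively, invoke directly that the arguments of \cite{CBdecomp} go through in the $(\lambda,\theta)$ setting, as already flagged in the footnote in \S\ref{ss:candec}. (3) Show $\phi$ is Poisson. The Poisson structure on each quiver variety comes from Hamiltonian reduction of $\Rep(\overline{Q},-)$. On the dense open locus of $\theta$-canonically stable points (Theorem~\ref{thm:stablesmooth}), which is smooth and symplectic, one checks that $\phi$ pulls back the symplectic form on $\Nak{\lambda}{\alpha}{\theta}$ to the product symplectic form: this is a direct computation with the reduced forms, since a direct sum $\bigoplus x_i$ of representations has tangent space $\bigoplus$ (tangent spaces) $\oplus$ (off-diagonal $\Ext$-terms), and the off-diagonal terms contribute a hyperbolic (hence symplectically orthogonal-decomposing) summand that matches the symmetric-product symplectic form; then by density and normality the Poisson bracket is respected everywhere.

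The main obstacle I expect is step (3) — verifying that $\phi$ is Poisson, not merely an isomorphism of varieties — together with the bookkeeping needed to make the reduction to the affine case rigorous. Crawley-Boevey's theorem in \cite{CBdecomp} is stated as an isomorphism of varieties, so the genuinely new content here is the word ``\textbf{Poisson}'' in the statement; one must either trace through the symplectic structure on the étale-local model $\mu_{Q'}^{-1}(0)/\!/G(\mathbf e)$ of Theorem~\ref{thm:etalelocalgeneral} (where the decomposition into factors is visible as a decomposition of the quiver $Q'$) and observe it is compatible with the product Poisson structure, or argue on the smooth locus via the explicit Hamiltonian-reduced form and then extend by continuity/normality. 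A secondary subtlety: one must make sure that the symmetric-product Poisson structure on $S^{n_i}(\Nak{\lambda}{\sigma^{(i)}}{\theta})$ is the intended one (the quotient Poisson structure from the $n_i$-fold product), and that when $\sigma^{(i)}$ is a real root the factor $S^{n_i}(\Nak{\lambda}{\sigma^{(i)}}{\theta})$ is a reduced point contributing trivially, while for non-isotropic imaginary $\sigma^{(i)}$ the multiplicity $n_i$ may exceed one (unlike in Theorem~\ref{thm:CBdecomp}) — this is exactly the content that the $\theta \ne 0$ generalization buys, and it should be handled uniformly by the above argument since nothing in the Poisson computation used $n_i = 1$.
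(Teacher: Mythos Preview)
Your construction of $\phi$ and your argument for step (3) (the Poisson property, checked on the open leaf via the pulled-back symplectic form and extended by normality) are essentially what the paper does.

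The real divergence is in step (2). Your primary approach---a commutative square over the projective morphisms $\Nak{\lambda}{\bullet}{\theta}\to\Nak{\lambda}{\bullet}{0}$ of Lemma~\ref{lem:obvious}---has a gap: the bottom edge would be Crawley-Boevey's isomorphism for $\Nak{\lambda}{\alpha}{0}$, which is governed by the canonical decomposition with respect to $\Sigma_{\lambda}$, \emph{not} $\Sigma_{\lambda,\theta}$. These decompositions generally differ, so the square does not commute in any useful way; and if you replace $\lambda$ by your auxiliary $\lambda'$ with $R^+_{\lambda'}=R^+_{\lambda,\theta}$, there is no algebraic map from $\Nak{\lambda}{\alpha}{\theta}$ to $\Nak{\lambda'}{\alpha}{0}$ to put on the sides. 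You also assert ``$\phi$ is bijective on closed points'' without giving an argument---this is precisely the hard part.

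The paper's route is different and is the reason for the standing hypothesis~(\ref{eq:assume}) that $\lambda\in\R^{Q_0}$ when $\theta\ne 0$: it uses a \emph{hyperk\"ahler twist} (Proposition~\ref{prop:diffeo}) to produce a stratification-preserving homeomorphism $\Nak{\lambda}{\alpha}{\theta}\to\Nak{\nu}{\alpha}{0}$ with $\nu=-\lambda-\mathbf{i}\theta$. The point of this complex parameter is exactly that $R^+_{\nu}=R^+_{\lambda,\theta}$, so the canonical decompositions match and the diagram with Theorem~\ref{thm:CBdecomp} on the bottom does commute; this yields bijectivity of $\phi$, and then normality plus Zariski's Main Theorem finishes. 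Your fallback (b)---redoing the arguments of \cite{CBdecomp} directly for the pair $(\lambda,\theta)$---is plausible (the paper itself remarks that this should work), but the paper deliberately sidesteps that rewriting via the hyperk\"ahler trick, at the cost of the reality assumption on $\lambda$.
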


The proof of Theorem \ref{thm:decompCB2} is given at the end of
section \ref{sec:decompCB2-pf}. In order to deduce Theorem
\ref{thm:decompCB2} from \cite[Theorem 1.1]{CBdecomp}, we use
hyperk\"ahler twists. By our main assumption \eqref{eq:assume},
$\lambda \in \R^{\Qo}$.

\begin{prop}\label{prop:diffeo}
Let $\nu = -\lambda - \mathbf{i} \theta$ and consider
$\Nak\lambda\a\theta$, $\Nak{\nu}{\a}{0}$ as complex analytic
spaces. Hyperk\"ahler twisting defines a homeomorphism of stratified
spaces
$$
\Psi : \Nak\lambda\a\theta \stackrel{\sim}{\longrightarrow} \Nak{\nu}{\a}{0},
$$ i.e. $\Psi$ restricts to a homeomorphism $
\Nak\lambda\a\theta_{(H)} \stackrel{\sim}{\longrightarrow}
\Nak{\nu}{\a}{0}_{(H)}$ for all classes $(H)$. In particular, the
homeomorphism maps stable representations to stable ($=$ simple)
representations.
\end{prop}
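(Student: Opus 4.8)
The plan is to construct the homeomorphism $\Psi$ directly from the hyperk\"ahler structure on $\Rep(\overline Q,\alpha)$ and then check that it is compatible with the stratifications.

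First I would recall the standard hyperk\"ahler package. The space $\Rep(\overline Q,\alpha) = T^*\Rep(Q,\alpha)$ carries a flat hyperk\"ahler metric (it is just a quaternionic vector space $\mathbb{H}^N$), with the compact group $K(\alpha) = \prod_i U(\alpha_i)$ acting preserving all three complex structures. Writing the associated hyperk\"ahler moment map as $\mu_{\mathbb{R}} \oplus \mu_{\mathbb{C}}$ (valued in $\mathfrak{k}(\alpha)^* \oplus \mathfrak{k}(\alpha)^*_{\mathbb{C}}$, identified via the trace form with $\mathfrak{k}(\alpha) \oplus \mathfrak{g}(\alpha)$), the key classical fact (Kempf--Ness for the complex GIT quotient, plus the hyperk\"ahler quotient description) is that
$$
\Nak\lambda\a\theta \;=\; \mu^{-1}_{\mathbb{C}}(\lambda)^{\theta}/\!/G(\alpha) \;\cong\; \bigl(\mu^{-1}_{\mathbb{R}}(\theta)\cap \mu^{-1}_{\mathbb{C}}(\lambda)\bigr)/K(\alpha)
$$
as topological (even stratified) spaces, where on the right $\theta \in \mathfrak{k}(\alpha)$ is the central element $(\theta_i\,\mathrm{Id})_i$; this uses $\theta\cdot\alpha = 0$ and is exactly the statement that $\theta$-polystable orbits meet the real moment-map level set in a single $K(\alpha)$-orbit. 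The point of introducing $\nu = -\lambda - \mathbf{i}\theta$ is that rotating the complex structure within the $2$-sphere of compatible complex structures sends the triple of real/complex moment-map conditions $(\mu_{\mathbb{R}}=\theta,\ \mu_{\mathbb{C}}=\lambda)$ to the triple $(\mu'_{\mathbb{R}}=0,\ \mu'_{\mathbb{C}}=\nu)$ for the new complex structure, while not changing the underlying real manifold $\mu^{-1}_{\mathbb{R}}(\theta)\cap\mu^{-1}_{\mathbb{C}}(\lambda) = (\mu'_{\mathbb{R}})^{-1}(0)\cap(\mu'_{\mathbb{C}})^{-1}(\nu)$ inside $\mathbb{H}^N$. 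Hence the identity map on this common real level set descends to a homeomorphism
$$
\Psi : \Nak\lambda\a\theta \;=\; \bigl(\mu^{-1}_{\mathbb{R}}(\theta)\cap\mu^{-1}_{\mathbb{C}}(\lambda)\bigr)/K(\alpha) \;\xrightarrow{\ \sim\ }\; \bigl((\mu'_{\mathbb{R}})^{-1}(0)\cap(\mu'_{\mathbb{C}})^{-1}(\nu)\bigr)/K(\alpha) \;=\; \Nak{\nu}{\a}{0}.
$$
Here one should note that $\nu = -\lambda - \mathbf{i}\theta$ automatically satisfies $\nu\cdot\alpha = 0$, and that the zero stability condition on the target is exactly the $\mu_{\mathbb{R}} = 0$ condition, so $\Nak\nu\a 0$ is genuinely an affine quiver variety parametrizing semisimple $\Pi^{\nu}(Q)$-modules.

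Next I would establish that $\Psi$ respects the stabilizer stratification. Under the Kempf--Ness identifications, for a point in the real level set with closed complex orbit, the stabilizer in $G(\alpha)$ equals the complexification of its stabilizer in $K(\alpha)$, and the latter stabilizer is computed purely in terms of the $K(\alpha)$-action on $\mathbb{H}^N$, which is manifestly unchanged by the choice of compatible complex structure. Therefore a point and its image under $\Psi$ have conjugate stabilizers in $G(\alpha)$, giving $\Psi(\Nak\lambda\a\theta_{(H)}) = \Nak\nu\a 0_{(H)}$ for every conjugacy class $(H)$; since by Proposition \ref{prop:strata} these loci are exactly the strata (= symplectic leaves), $\Psi$ is a homeomorphism of stratified spaces. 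The ``in particular'' clause is then immediate: the open stratum on either side is the one with stabilizer $(\C^{\times})$ (the minimal stabilizer), which on the $\theta = 0$ side is precisely the locus of simple $\Pi^{\nu}(Q)$-modules, i.e.\ stable $=$ simple representations, and $\Psi$ carries the $\theta$-stable locus onto it.

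The main obstacle, and the step that needs genuine care rather than citation, is the first one: making precise the Kempf--Ness-type identification of $\Nak\lambda\a\theta$ with the hyperk\"ahler quotient \emph{in the non-affine case} $\theta \neq 0$. For $\theta = 0$ this is classical (and is what \cite{CBmomap,CBnormal} use), but when $\theta \neq 0$ the GIT quotient $\mu^{-1}(\lambda)^\theta /\!/ G(\alpha)$ is a $\mathrm{Proj}$, not a $\mathrm{Spec}$, so one must verify that $\theta$-semistable points with closed orbit (in the semistable locus) are exactly the complex orbits meeting $\mu^{-1}_{\mathbb{R}}(\theta)$, and that two such meet it in the same $K(\alpha)$-orbit. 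This is the content of the Kirwan--Ness / Kempf--Ness correspondence for linearizations twisted by a character, applied to the (affine) variety $\mu^{-1}_{\mathbb{C}}(\lambda)$ with the character $\theta$ of $G(\alpha)$; I would phrase it carefully, possibly reducing to an ambient projective embedding or invoking the linearization directly, and check that the central element $\theta\in\mathfrak{k}(\alpha)$ produced by the moment-map shift matches the GIT stability parameter $\theta\in\Q^{\Qo}$ under the correspondence between characters of $G(\alpha)$ and central elements of $\mathfrak{k}(\alpha)$. Once this dictionary is in place, the rotation-of-complex-structure argument and the stabilizer bookkeeping are formal.
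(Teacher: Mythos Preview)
Your proposal is correct and follows essentially the same approach as the paper. The paper makes the argument more concrete by writing the quaternionic action explicitly, taking $h=(\mathbf{i}-\mathbf{j})/\sqrt{2}$ and computing directly how $\mu_{\mathbb{R}}$ and $\mu_{\mathbb{C}}$ transform under multiplication by $z\in\mathbb{H}$, rather than phrasing it as a rotation of complex structure; and it handles the Kempf--Ness step you flag by citing \cite[Corollary~6.2 and Proposition~6.5]{KingStable} (the latter giving $G(\alpha)_x = U(\alpha)_x^{\mathbb{C}}$, which is exactly your stabilizer argument), together with \cite{NeemanQuotient} for continuity of the bijection.
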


\begin{proof}
We follow the setup described in the proof of \cite[Lemma 3]{CBKleinian}. We have moment maps
$$ \mu_{\C}(x) = \sum_{a \in Q_1} [x_a,x_{a^*}], \quad \mu_{\R}(x) =
\frac{\sqrt{-1}}{2} \sum_{a \in Q_1} [x_a,x_{a}^{\dagger}] +
     [x_{a^*},x_{a^*}^{\dagger}].
$$ As shown in \cite[Corollary 6.2]{KingStable}, the Kempf-Ness
Theorem says that the embedding $\mu_{\C}^{-1}( \lambda) \cap
\mu^{-1}_{\R}( \mathbf{i} \theta) \hookrightarrow \mu_{\C}^{-1}(
\lambda)$ induces a bijection
\begin{equation}\label{eq:quo1}
\mu_{\C}^{-1}( \lambda) \cap \mu^{-1}_{\R}( \mathbf{i} \theta) / U(\alpha) \stackrel{\sim}{\longrightarrow} \Nak\lambda\a\theta. 
\end{equation}
Since the embedding is clearly continuous and the topology on the
quotients $\mu_{\C}^{-1}( \lambda) \cap \mu^{-1}_{\R}( i \theta) /
U(\alpha)$ and $\Nak\lambda\a\theta$ is the quotient topology (for the
latter space, see \cite[Corollary 1.6 and Remark
  1.7]{NeemanQuotient}), the bijection \eqref{eq:quo1} is continuous.

Define a stratification $\mu_{\C}^{-1}( \lambda) \cap \mu^{-1}_{\R}(
\mathbf{i} \theta) / U(\alpha)$ analogous to the stratification of
$\Nak\lambda\a\theta$ described in section \ref{sec:strata}. Let
$y \in \Nak\lambda\a\theta$, and $x = x_1^{e_1} \oplus \cdots \oplus x_k^{e_k} \in T(y)$ a $\theta$-polystable lift
 in $\mu_{\C}^{-1}(\lambda) \cap \mu^{-1}_{\R}( i \theta)$ (which exists by Proposition \ref{p:polystable}). Then Lemma \ref{lem:Gxred}
says that $G_x = \G(\mathbf{e})$ and \cite[Proposition 6.5]{KingStable}
implies that $U(\alpha)_{x} = U(\mathbf{e})$. Hence $\G(\alpha)_x =
U(\alpha)_x^{\C}$. Therefore the homeomorphism \eqref{eq:quo1} restricts to a bijection 
$$ 
(\mu_{\C}^{-1}( \lambda) \cap \mu^{-1}_{\R}( \mathbf{i} \theta) /
U(\alpha))_{(K)} \rightarrow \Nak\lambda\a\theta_{(K^{\C})}
$$
for each $(K)$. 

Let the quaternions $\mathbb{H} = \R \oplus \R \mathbf{i} \oplus \R
\mathbf{j} \oplus \R \mathbf{k}$ act on $\Rep(\overline{Q},\alpha)$ by
extending the usual complex structure so that $\mathbf{j} \cdot
(x_{a},x_{a^*}) = (- x_{a^*}^{\dagger}, x_a^{\dagger})$ and $\mathbf{k} \cdot
(x_{a},x_{a^*}) = ( -\mathbf{i} x_{a^*}^{\dagger}, \mathbf{i} x_a^{\dagger})$. Here the dagger denotes the Hermitian adjoint. In general,
$$
(z_1 + z_2 \mathbf{j}) \cdot (x_a,x_{a^*}) = (z_1 x_a - z_2 x_{a^*}^{\dagger}, z_1 x_{a^*} + z_2 x_a^{\dagger}).
$$
This action commutes with the action of $U(\alpha)$ and satisfies  
\begin{align}
\mu_{\R}(z \cdot x) & = ( ||z_1||^2 - || z_2||^2) \mu_{\R}(x) -
\mathbf{i} z_1 \overline{z}_2 \mu_{\C}(x) - \mathbf{i} z_2
\overline{z}_1 \mu_{\C}(x)^{\dagger}, \label{eq:zcomp} \\ \mu_{\C}(z
\cdot x) & = z_1^2 \mu_{\C}(x) - z_2^2 \mu_{\C}(x)^{\dagger} - 2
\mathbf{i} z_1 z_2 \mu_{\R}(x), \quad \forall \ z \in
\mathbb{H}. \label{eq:zreal}
\end{align}
Let $h = (\mathbf{i} - \mathbf{j}) / \sqrt{2}$. Then multiplication by $h$ defines a homeomorphism  
$$ \mu_{\C}^{-1}( \lambda) \cap \mu^{-1}_{\R}( \mathbf{i} \theta)
\stackrel{\sim}{\longrightarrow} \mu_{\C}^{-1}(- \lambda - \mathbf{i}
\theta) \cap \mu^{-1}_{\R}( 0)
$$
Since multiplication by $h$ commutes with the action of $U(\alpha)$, this homeomorphism descends to a homeomorphism 
$$ \left(\mu_{\C}^{-1}( \lambda) \cap \mu^{-1}_{\R}( \mathbf{i}
\theta) \right)/U(\alpha) \stackrel{\sim}{\longrightarrow} \left(
\mu_{\C}^{-1}(- \lambda - \mathbf{i} \theta) \cap \mu^{-1}_{\R}( 0)
\right) / U(\alpha)
$$
which preserves the stratification by stabilizer type.  

Thus, the map $\Psi$ is the composition of three homeomorphisms, each
of which preserves the stratification.
\end{proof}

\begin{remark}
Our general assumption that $\lambda \in \R^{Q_0}$ if $\theta \neq 0$
is required in the proof of Proposition \ref{prop:diffeo} to ensure
that multiplication by $h$ lands in $\mu_{\R}^{-1}(0)$. Equation
\eqref{eq:zcomp} implies that it would suffice to assume more
generally that there exists $z \in \C$ such that $|z| = 1$ and $z
\lambda \in \R^{Q_0}$. It is natural to expect that Theorem
\ref{thm:decompCB2} holds without the assumption $\lambda \in
\R^{Q_0}$.
\end{remark}

\begin{remark}
Using the notion of smooth structures on stratified symplectic spaces,
as defined in \cite{SjamaarL}, one can presumably strengthen
Proposition \ref{prop:diffeo} to the statement that there is a
diffeomorphism of stratified symplectic spaces $\Nak\lambda\a\theta
\stackrel{\sim}{\longrightarrow} \Nak{\nu}{\a}{0}$.
\end{remark}

\begin{prop}\label{prop:Mirrnormal}
The variety $\Nak\lambda\a\theta$ is irreducible and normal.
\end{prop}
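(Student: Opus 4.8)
The plan is to reduce the statement about $\Nak\lambda\a\theta$ to the already-understood affine case $\Nak{\nu}{\a}{0}$ via the hyperk\"ahler twist of Proposition \ref{prop:diffeo}, and then to the known irreducibility and normality results of Crawley-Boevey. First I would recall that for $\theta = 0$ the variety $\Nak{\nu}{\a}{0} = \mu^{-1}(\nu)/\!\!/ G(\a)$ is irreducible and normal whenever $\a \in \N R^+_{\nu}$: this is \cite[Theorem 1.1 or Theorem 1.2]{CBnormal} (irreducibility of $\mu^{-1}(\nu)$ plus the fact that it is a complete intersection with normal, and in fact with the quotient normal by Boutot/Hochster–Roberts-type arguments, or directly by Crawley-Boevey's results). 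So the affine model in Theorem \ref{thm:etalelocalgeneral}, namely $\mu_{Q'}^{-1}(0)/\!\!/G(\mathbf{e})$, is normal.

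Normality is a local (étale-local) property, so the étale isomorphism of Theorem \ref{thm:etalelocalgeneral} between a neighborhood of $0$ in $\mu_{Q'}^{-1}(0)/\!\!/G(\mathbf{e})$ and a neighborhood of an arbitrary point $x \in \Nak\lambda\a\theta$ immediately transports normality: every point of $\Nak\lambda\a\theta$ has an étale neighborhood isomorphic to an open subset of a normal variety, hence $\Nak\lambda\a\theta$ is normal. (One should check the affine model is reduced and of pure dimension so "normal" is unambiguous; this follows from $p(\a) = p(\mathbf{e})$ in Remark \ref{rem:etalep} and the complete intersection property.)

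For irreducibility I would use Proposition \ref{prop:diffeo}: $\Psi \colon \Nak\lambda\a\theta \to \Nak{\nu}{\a}{0}$ is a homeomorphism of the underlying topological spaces in the analytic topology. Since $\Nak{\nu}{\a}{0}$ is irreducible as a variety, it is connected, hence $\Nak\lambda\a\theta$ is connected in the analytic topology, hence connected in the Zariski topology. A connected normal variety is irreducible (its local rings are domains, so the irreducible components are disjoint, and connectedness forces there to be only one). This gives irreducibility of $\Nak\lambda\a\theta$.

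The main obstacle, I expect, is making sure the inputs from \cite{CBnormal} genuinely apply. Those results are stated for $\theta = 0$, so the point is that after the twist we land on a variety $\Nak{\nu}{\a}{0}$ to which \textit{loc. cit.}\ applies directly — but one must check $\a \in \N R^+_{\nu}$, i.e.\ that $R^+_{\lambda,\theta} \subseteq R^+_{\nu}$ (roots pairing to zero with $\lambda$ and $\theta$ pair to zero with $\nu = -\lambda - \mathbf{i}\theta$), which is immediate from the definition of $\nu$, and that a parameter $\lambda'$ as in \S\ref{ss:candec} can be chosen if one prefers to argue with $R^+_{\lambda,\theta} = R^+_{\lambda'}$. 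A secondary point is that $\Psi$ is only claimed to be a stratified homeomorphism, not an isomorphism of varieties, so it cannot be used to transfer normality; that is precisely why normality must come from the étale-local model instead. Alternatively, one could bypass the twist for irreducibility as well by arguing that the affine model $\mu_{Q'}^{-1}(0)/\!\!/G(\mathbf{e})$ is irreducible (again \cite{CBnormal}) and that the connected, even irreducible, nature of the total space follows since it is normal and its Zariski connected components each meet the analytic-connected space $\Nak{\nu}{\a}{0}$ — but the cleanest route is the two-step argument above.
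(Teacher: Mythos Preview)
Your proof is correct and follows essentially the same approach as the paper: connectedness via the hyperk\"ahler homeomorphism of Proposition \ref{prop:diffeo}, normality via the \'etale-local model of Theorem \ref{thm:etalelocalgeneral} combined with \cite{CBnormal}, and irreducibility from connectedness plus the fact that the local rings are domains. The only cosmetic difference is that the paper orders the steps as connected $\Rightarrow$ local rings are domains (via Corollary \ref{cor:formnbd} and \cite[Corollary 1.4]{CBdecomp}) $\Rightarrow$ irreducible $\Rightarrow$ normal, whereas you establish normality first and deduce that local rings are domains from that; both routes arrive at the same place for the same reasons.
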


\begin{proof}
We begin by showing that the variety $\Nak\lambda\a\theta$ is
connected. Proposition \ref{prop:diffeo} implies that
$\Nak\lambda\a\theta$ is connected if and only if $\Nak{\nu}{\a}{0}$
is connected. The latter is known to be connected (and nonempty) by \cite[Corollary
  1.4]{CBdecomp}.

Next, we show that $\Nak\lambda\a\theta$ is irreducible. Since
$\Nak\lambda\a\theta$ is connected, it suffices to show that, for each
$\C$-point $x\in \Nak\lambda\a\theta$, the local ring
$\mc{O}_{\Nak\lambda\a\theta,x}$ is a domain. This ring embeds into
the complete local ring
of $x$ in $\Nak\lambda\a\theta$. By Corollary
\ref{cor:formnbd}, the complete local ring
of $x$ in
$\Nak\lambda\a\theta$ is isomorphic to the
complete local ring
of $0$
in $\Nak{0}{e}{0}$. By \cite[Corollary 1.4]{CBdecomp}, this is a
domain. Finally, normality is an \'etale local property, \cite[Remark
  2.24 and Proposition 3.17]{Milne}. Therefore, as in the previous
paragraph this follows from Theorem \ref{thm:etalelocalgeneral} and
\cite[Theorem 1.1]{CBnormal}.
\end{proof}
We can now prove Theorem~\ref{t:stab-exists}.

\begin{proof}[Proof of Theorem~\ref{t:stab-exists}]  
	Let $\nu = -\lambda - \mathbf{i} \theta$. By the last statement of Proposition \ref{prop:diffeo}, there exists a $\theta$-stable representation of $\Pi^{\lambda}(Q)$ of dimension $\a$ if and only if there exists a simple representation of $\Pi^{\nu}(Q)$ of dimension $\a$. By \cite[Theorem~1.3]{CBmomap}, the latter happens if and only if $\a \in \Sigma_{\nu}$. Since $\lambda \in \R^{Q_0}$, the sets $\Sigma_{\nu}$ and $\Sigma_{\lambda,\theta}$ are equal. Thus, we deduce that there exists a $\theta$-stable representation of $\Pi^{\lambda}(Q)$ of dimension $\a$ if and only if $\a \in \Sigma_{\lambda,\theta}$. 
\end{proof}
  
As a consequence of Theorem~\ref{t:stab-exists} and Proposition \ref{prop:Mirrnormal}, we obtain:
  \begin{cor}\label{c:dense}
For $\alpha \in \Sigma_{\lambda,\theta}$, the locus of $\theta$-stable representations is dense in $\Nak\lambda\a\theta$.
  \end{cor}
\subsection{The proof of Theorem \ref{thm:decompCB2}}\label{sec:decompCB2-pf} Recall that $\alpha= n_1 \sigma^{(1)} + \cdots + n_k \sigma^{(k)}$ is the canonical decomposition of $\alpha$ in $R^+_{\lambda,\theta}$. The map $\phi$ is defined as follows. Let $H(\a)$ be the product $\G(\sigma^{(1)})^{n_1} \times \cdots \times \G(\sigma^{(k)})^{n_k}$, thought of as a subgroup of $\G(\a)$. There is a natural $H(\a)$-equivariant inclusion $\prod_i T^* \Rep(Q,\sigma^{(i)})^{n_i} \hookrightarrow T^* \Rep(Q,\a)$. This is an inclusion of symplectic vector spaces. Since the moment map for the action of $H(\a)$ on $T^* \Rep(Q,\a)$ is the composition of the moment map for $\G(\a)$ followed by projection from the Lie algebra of $\G(\a)$ to the Lie algebra of $H(\a)$, the above inclusion restricts to an inclusion $\prod_i (\mu_{\sigma^{(i)}}^{-1}(\lambda)^{\theta})^{n_i} \hookrightarrow \mu^{-1}_{\a}(\lambda)^{\theta}$, inducing a map of GIT quotients 
$$ \prod_i \Nak{\lambda}{\sigma^{(i)}}{\theta}^{n_i} \rightarrow
\Nak{\lambda}{\a}{\theta}.
$$
This map, which sends a tuple of representations $(M_{i,j})$ to the direct sum $\bigoplus_{i,j} M_{i,j}$ clearly factors through $\prod_i   S^{n_i} \left( \Nak{\lambda}{\sigma^{(i)}}{\theta} \right)$. It is this map that we call $\phi$. 

 Passing to the analytic topology, Proposition \ref{prop:diffeo}
 implies that we get a commutative diagram
\begin{equation}\label{eq:decomp1}
\xymatrix{ \prod_i S^{n_i} \left( \Nak{\lambda}{\sigma^{(i)}}{\theta}
  \right) \ar[rr] \ar[d] & & \Nak\lambda\a\theta \ar[d] \\ \prod_i
  S^{n_i} \left( \Nak{\lambda}{-\sigma^{(i)} - \mathbf{i} \theta}{0}
  \right) \ar[rr] & & \Nak{-\lambda-\mathbf{i} \theta}{\a}{0} .  }
\end{equation}
where both vertical arrows are homeomorphisms and the bottom horizontal arrow is an isomorphism by Theorem \ref{thm:CBdecomp}. Therefore, we conclude that $\phi$ is bijective. Since we are working over the complex numbers, and we have shown in Proposition \ref{prop:Mirrnormal} that $\Nak\lambda\alpha\theta$ is normal, we conclude by Zariski's main theorem that $\phi$ is an isomorphism. 

As a consequence, we can compute the dimension of
$\Nak\lambda\a\theta$, which in the case $\theta\neq0$ is
\cite[Corollary 1.4]{CBmomap}. We begin with the following basic lemma:
\begin{lem}\label{l:dimsigma}
If $\alpha \in \Sigma_{\lambda,\theta}$, then $\dim \Nak\lambda\a\theta = 2p(\alpha)$. Moreover $\dim \mu^{-1}(\lambda)^\theta \geq \alpha \cdot \alpha + 2p(\alpha)-1$.
\end{lem}
\begin{proof}
 Let $U$ be the subset of
  $\Nak\lambda\a\theta$ consisting of $\theta$-stable
  representations. Since $\alpha$ is assumed to be in
  $\Sigma_{\lambda,\theta}$, Corollary \ref{c:dense}
  implies
  that $U$ is a dense open subset of $\Nak{\lambda}{\a}{\theta}$. Let
  $V$ be the open subset of $\Rep(\overline{Q},\alpha)$ consisting of
  $\theta$-stable representations. Then $U$ is the image of
  $\mu^{-1}(\lambda) \cap V$ under the quotient map and hence $V$ is
  non-empty. The group $\PG(\alpha)$ acts freely on $V$
  and $\mu$ is smooth when restricted to $V$. Thus,
$$
\dim U = \dim \Rep(\overline{Q},\alpha) - 2 (\dim \G(\alpha) - 1) = 2 p(\alpha),
$$
as required. For the second statement, observe that $\dim (V \cap \mu^{-1}(\lambda)) = \dim U + \dim \PG(\a)$ since $\PG(\alpha)$ acts freely on $V$.
\end{proof}
Then we immediately conclude
\begin{cor}\label{cor:dimsympsing} For $\alpha\in R^+_{\lambda,\theta}$ with
canonical decomposition $\alpha = n_1 \sigma^{(1)} + \cdots + n_k \sigma^{(k)}$,
the variety $\Nak\lambda\a\theta$ has dimension $2 \sum_{i = 1}^k n_i
p(\sigma^{(i)})$.
\end{cor}
Finally, we need to check that the morphism $\phi$ is Poisson. Since
both varieties are normal by Proposition \ref{prop:Mirrnormal}, it
suffices to show that $\phi$ induces an isomorphism of smooth symplectic
varieties between the open leaf of $\Nak\lambda\a\theta$ and the open
leaf of $\prod_i S^{n_i} \left( \Nak{\lambda}{\sigma^{(i)}}{\theta}
\right)$. By Proposition \ref{prop:strata}, the symplectic leaves of
$\Nak\lambda\a\theta$ are connected components of the strata given by stabilizer
type. The explicit description of $\phi$ given at the start
of this section shows that $\phi$ restricts to an isomorphism between
strata. In particular, $\phi$ restricts to an isomorphism between the
open leaves.

The symplectic structure on the open leaf of $\Nak\lambda\a\theta$
comes from the symplectic structure on $T^* \Rep(Q,\a)$. More
specifically, the non-degenerate closed form on the latter space
restricts to a degenerate $\G(\a)$-invariant two-form on
$\mu^{-1}(\lambda)^{\theta}$. Hence it descends to a closed two-form
on $\Nak\lambda\a\theta$. The restriction of this two-form to the open
leaf is non-degenerate. The two-form on the open leaf of $\prod_i
S^{n_i} \left( \Nak{\lambda}{\sigma^{(i)}}{\theta} \right)$ is defined
similarly. Now the point is that under the embedding $\prod_i
(\mu_{\sigma^{(i)}}^{-1}(\lambda)^{\theta})^{n_i} \hookrightarrow
\mu^{-1}_{\a}(\lambda)^{\theta}$, the $H(\a)$-invariant closed
two-form on $\prod_i
(\mu_{\sigma^{(i)}}^{-1}(\lambda)^{\theta})^{n_i}$ is simply the
pull-back of the $\G(\a)$-invariant closed two-form on $
\mu^{-1}_{\a}(\lambda)^{\theta}$. This implies that the two-form on
the open leaf of $\prod_i S^{n_i} \left(
\Nak{\lambda}{\sigma^{(i)}}{\theta} \right)$ is the pull-back, under
$\phi$, of the symplectic two-form on the open leaf of
$\Nak\lambda\a\theta$.

Using Proposition \ref{prop:Mirrnormal}, we can now show that each stratum $\Nak{\lambda}{\a}{\theta}_{\tau}$ is connected. 

\begin{cor}\label{cor:strataNakaconnected}
	Each stratum $\Nak{\lambda}{\a}{\theta}_{\tau}$ of $\Nak{\lambda}{\a}{\theta}$ is irreducible (and thus connected) and nonempty. In particular, the strata $\Nak{\lambda}{\a}{\theta}_{\tau}$ are precisely the symplectic leaves of $\Nak{\lambda}{\a}{\theta}$.  
\end{cor}

\begin{proof}
	Writing $\tau = (e_1, \beta^{(1)}; \ds ; e_k, \beta^{(k)})$, we can repeat the construction of $\phi$ given above (even though $\tau$ is not the canonical decomposition of $\alpha$) to get a morphism 
	$$
	\phi : \prod_i S^{e_i} \Nak{\lambda}{\beta^{(i)}}{\theta} \rightarrow \Nak{\lambda}{\a}{\theta}.
	$$
	The stratum $\Nak{\lambda}{\a}{\theta}_{\tau}$ is contained in the image of $\phi$ and $\phi^{-1}(\Nak{\lambda}{\a}{\theta}_{\tau})$ is dense in the domain of $\phi$ by Corollary \ref{c:dense}. Since the domain is irreducible and nonempty by Proposition \ref{prop:Mirrnormal}, we deduce that $\Nak{\lambda}{\a}{\theta}_{\tau}$ is irreducible.   	
\end{proof}

\begin{remark}\label{rem:Martinoconnectedleaves}
	The statement of Corollary~\ref{cor:strataNakaconnected} (at least in the case $\theta = 0$) appears as Theorem 1.3 of \cite{MarsdenWeinsteinStratification}. However, it is falsely claimed in Proposition 4.5 of that paper that the irreducibility of the stratum $\Nak{\lambda}{\a}{0}_{\tau}$ follows from a result of G. Schwarz.  
\end{remark}

\subsection{Flatness of the moment map}
We need an additional result which follows from \cite{CBnormal}. Let
$\xi: \mu^{-1}(\lambda)^\theta \to \Nak{\lambda}{\a}{\theta}$ be the quotient map.
\begin{thm}\cite[Corollary 6.4]{CBnormal} \label{thm:dimmmf} For $\tau = (e_1, \beta^{(1)}; \ds ; e_k, \beta^{(k)})$ a representation type,
\begin{equation}
\dim \xi^{-1}(\Nak{\lambda}{\a}{\theta}_\tau) \leq \alpha \cdot \alpha - 1 +p(\alpha) + \sum_{i = 1}^k p(\beta^{(i)}).
\end{equation}
\end{thm}
\begin{proof} The proof follows verbatim as in \cite[Corollary 6.4]{CBnormal}, substituting $\theta$-stable representations for simple representations. Alternatively, \cite[Corollary 6.4]{CBnormal} as written together with Theorem \ref{thm:etalelocalgeneral} yields the result.
\end{proof}
\begin{prop}\label{p:flat-ci}
  The restricted moment map $\overline \mu: \Rep(\overline{Q},\alpha)^\theta \to \pg(\alpha)$
  is flat over an open subset $U$ containing
  $B_{\alpha,\theta} := \{\lambda \in \R^{Q_0} \mid \alpha \in
  \Sigma_{\lambda,\theta}\} \subseteq \R^{Q_0}$,
  with all fibers of dimension $\alpha \cdot \alpha + 2p(\alpha)-1$.
  In particular, if $\alpha \in \Sigma_{\lambda,\theta}$, then the
  variety $\mu^{-1}(\lambda)^\theta$ is a complete intersection in the
  open subset $\overline \mu^{-1}(U)^\theta \subseteq \Rep(\overline{Q},\alpha)^\theta$. 
\end{prop}
\begin{proof} 
  By Lemma \ref{l:dimsigma} and Theorem \ref{thm:dimmmf}, all of the fibers $\mu^{-1}(\lambda)$ for $\lambda \in B_{\alpha,\theta}$ have the same dimension, $\alpha \cdot \alpha + 2p(\alpha)-1$. Since this equals  the difference of dimensions
  $\dim \Rep(\overline{Q},\alpha) - \dim \pg(\alpha)$, it follows that the subset of the base where the fiber has this dimension is open, call it $U$.  
  Then, since $B_{\alpha,\theta}$ is smooth, and $\mu^{-1}(U)$ is open (hence smooth and therefore Cohen-Macaulay), it follows that the moment map is flat as stated, and therefore that every fiber is a complete intersection.  
\end{proof}



\section{Smooth vs. stable points}\label{sec:stabsmoothproof}

As usual, choose a deformation parameter $\lambda \in \R^{\Qo}$,
a stability parameter $\theta \in \Z^{\Qo}$, and a dimension vector $\alpha
\in \N R^+_{\lambda,\theta}$. The main goal of this section is to prove
Theorem \ref{thm:stablesmooth}, which says that $x \in
\Nak\lambda\a\theta$ is canonically $\theta$-polystable if and only if it
is in the smooth locus of $\Nak\lambda\a\theta$.

\subsection{Isotropic roots}

In this section, we briefly consider quiver varieties associated to isotropic roots. The subgroup of $\mathrm{GL}(\Z^{Q_0})$ generated by the reflection at loop free vertices is denoted $W(Q)$. 

\begin{lem}\label{lem:Kleiniancasequiver}
	Let $\alpha \in \Sigma_{\lambda,\theta}$ be an isotropic root. Then there exists $w \in W(Q)$ such that $\delta = w \alpha$ is in the fundamental domain, $Q' = \Supp \ \delta$ is an affine Dynkin quiver, $\delta |_{Q'}$ is the minimal imaginary root and $\mf{M}_{\lambda}(\alpha,\theta) \simeq \mf{M}_{w \lambda}(\delta, w \theta)$. 
\end{lem}

\begin{proof}
	As the name implies, the fundamental domain $\mc{F}(Q)$ is a fundamental domain for the action of the reflection group $W(Q)$ of $Q$ on the set of imaginary roots. Therefore there exists $w$ such that $w \alpha \in \mc{F}(Q)$. The fact that $Q'$ is affine Dynkin and $\delta |_{Q'}$ is the minimal imaginary root follows from \cite[Lemma 1.9 (d)]{KacThm}. 
	
	Thus, we show that $\delta \in \Sigma_{w \lambda,w \theta}$ and $\mf{M}_{\lambda}(\alpha,\theta) \simeq \mf{M}_{w \lambda}(\delta, w \theta)$. The Lusztig-Maffei-Nakajima reflection isomorphisms of quiver varieties (see in particular \cite[Theorem 26]{Maffei}) shows that if either $\lambda_i$ or $\theta_i$ is non-zero (equivalently, as explained in example \ref{ex:realrootdecomp}, if $e_i \notin \Sigma_{\lambda,\theta}$) then $\mf{M}_{\lambda}(\alpha,\theta) \simeq \mf{M}_{s_i \lambda}(s_i \alpha, s_i \theta)$. It is easily checked that if $e_i \in \Sigma_{\lambda,\theta}$ then $(\alpha,e_i) \le 0$ (otherwise $\alpha = (\alpha - e_i) + e_i$ with $p(\alpha) = p(\alpha - e_i)$). Moreover, the fact that $s_i$ permutes the set $R^+ \smallsetminus \{ e_i \}$ implies that $s_i \alpha \in \Sigma_{s_i \lambda, s_i \theta}$ if $e_i \notin \Sigma_{\lambda,\theta}$. Hence, we need to show that $w = s_{i_r} \cdots s_{i_1}$ can be chosen so that $(s_{i_l} \cdots s_{i_1} \lambda)_{i_{l+1}} \neq 0$ or $(s_{i_l} \cdots s_{i_1} \theta)_{i_{l+1}} \neq 0$ for all $l = 1, \ds, r-1$. Recall that every positive root $\beta = \sum_{i \in Q_0} k_i e_i$ has height $\mathrm{ht}(\beta) := \sum_{i \in Q_0} k_i \ge 1$. As in the proof of \cite[Proposition 16.10]{CarterBook}, the key thing to note is that $\delta$ is specified by the fact that it is the unique element of minimal height in the orbit $W(Q) \cdot \alpha$. Thus, if $\alpha \notin \mc{F}(Q)$, then there exists $i \in Q_0$ such that $(\alpha,e_i) > 0$. This implies that $e_i \notin \Sigma_{\lambda,\theta}$ and $\mathrm{ht}(s_i \alpha) < \mathrm{ht}(\alpha)$. Since every element in the orbit $W(Q) \cdot \alpha$ is a positive root (and hence has positive height) this cannot continue forever, and the result follows. 
\end{proof} 

In particular, we note that Lemma \ref{lem:Kleiniancasequiver} implies that if $\alpha \in \Sigma_{\lambda,\theta}$ is an isotropic root, then $\mf{M}_{\lambda}(\alpha,\theta)$ is the partial resolution of a partial deformation of a Kleinian singularity. Moreover, the type of the Kleinian singularity is specified by the support of $w \alpha \in \mc{F}$. 

\subsection{The proof of Theorem \ref{thm:stablesmooth}}\label{sec:stablesmooth-pf}

The proof of Theorem \ref{thm:stablesmooth} follows closely the
arguments given in \cite[Theorem 3.2]{LeBCoadjoint}. We provide the
necessary details that show that the arguments of \textit{loc.~cit.}
are valid in our setting. First, notice that, under the isomorphism of
Theorem \ref{thm:decompCB2}, the open subset of
canonically $\theta$-polystable points in $\Nak\lambda\a\theta$ is the
product of the canonically $\theta$-polystable points in the spaces $S^{n_i}
\Nak\lambda{\sigma^{(i)}}\theta$. Therefore it suffices to show that
the set of canonically $\theta$-polystable points in $S^{n_i}
\Nak\lambda{\sigma^{(i)}}\theta$ is precisely the smooth locus. If
$\sigma^{(i)}$ is real then $S^{n_i} \Nak\lambda{\sigma^{(i)}}\theta$
is a point. If $\sigma^{(i)}$ is an isotropic root then by Lemma \ref{lem:Kleiniancasequiver}, $\Nak\lambda{\sigma^{(i)}}\theta$ is a partial resolution of a du Val singularity. In particular, it is a $2$-dimensional
(quasi-projective) variety. This implies that the smooth locus of
$S^{n_i} \Nak\lambda{\sigma^{(i)}}\theta$ equals
$$ S^{n_i, \circ}
\ \mathfrak{M}_{\lambda}(\sigma^{(i)},\theta)_{\mathrm{sm}} := \left\{
  (p_j) \ \left| \ p_j \in
\mathfrak{M}_{\lambda}(\sigma^{(i)},\theta)_{\mathrm{sm}}, \ p_j \neq
p_k \textrm{ for $j \neq k$} \right. \right\}.
$$ On the other hand, the set of canonically $\theta$-polystable points in
$S^{n_i} \Nak\lambda{\sigma^{(i)}}\theta$ equals $S^{n_i, \circ} U$,
where $U \subset \Nak\lambda{\sigma^{(i)}}\theta$ is the set of
canonically $\theta$-polystable points. Therefore, in this case it
suffices to show that
$\mathfrak{M}_{\lambda}(\sigma^{(i)},\theta)_{\mathrm{sm}}$ equals
$U$. Finally, in the case where $\sigma^{(i)}$ is an anisotropic root, we have $n_i = 1$.

Thus, we are reduced to considering the situation where $\alpha \in
\Sigma_{\lambda,\theta}$ is an imaginary root. In this case, a point
$x$ is canonically $\theta$-polystable if and only if it is
$\theta$-stable. As in the proof of Corollary \ref{cor:dimsympsing},
it is clear from the definition of $\Nak\lambda\a\theta$ that the set
of $\theta$-stable points is contained in the smooth locus. Therefore
it suffices to show that if $x$ is not $\theta$-stable then it is a
singular point. As in section \ref{sec:etale}, let $x$ be the image of
a $\theta$-polystable representation
$y = y_1^{e_1} \oplus \cdots \oplus y_{\ell}^{e_{\ell}}$ (with the $y_i$ $\theta$-stable).
Let $\beta^{(i)} = \dim y_i$. Let $Q'$ be the quiver with $\ell$ vertices whose double
has $2 p(\beta^{(i)})$ loops at vertex $i$ and $-
(\beta^{(i)},\beta^{(j)})$ arrows between vertex $i$ and $j$. The
$\ell$-tuple $\mathbf{e} = (e_1, \ds, e_{\ell})$ defines a dimension
vector for the quiver $Q'$. By Theorem \ref{thm:etalelocalgeneral}, it
suffices to show that $0$ is contained in the singular locus of
$\mathfrak{M}_{0}(\mathbf{e},0)$.

In order to proceed, we require \cite[Proposition 1.1]{LeBSimple},
stated in our generality. The proof is identical to the proof given in
\textit{loc.~cit.}, this time using Theorem
\ref{thm:etalelocalgeneral}.

\begin{prop}\label{prop:simplevector}
Assume that $\alpha \in \Sigma_{\lambda,\theta}$ and let $x$ be a
geometric point of $\Nak{\lambda}{\alpha}{\theta}$, of representation
type $\tau = (e_1, \beta_1; \ds; e_k, \beta_k)$. Then $\mathbf{e}$ is
the dimension vector of a simple $\Pi^0(Q')$-module, i.e., $\mathbf{e}
\in \Sigma_0(Q')$.
\end{prop}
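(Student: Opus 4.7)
The plan is to use Theorem~\ref{thm:etalelocalgeneral} to transfer the existence of a $\theta$-stable $\Pi^{\lambda}(Q)$-representation of dimension $\alpha$ to the existence of a simple $\Pi^{0}(Q')$-representation of dimension $\mathbf{e}$. The key observation is that the \'etale local isomorphism identifies the open stratum of $\theta$-stable points of $\Nak{\lambda}{\alpha}{\theta}$ near $x$ with the open stratum of simple $\Pi^{0}(Q')$-modules of dimension $\mathbf{e}$ near $0 \in \Nak{0}{\mathbf{e}}{0}$, because on both sides the strata are classified by the stabilizer type of the polystable representative.

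First I would record, using Proposition~\ref{prop:diffeo} together with \cite{CBmomap}, that the assumption $\alpha \in \Sigma_{\lambda,\theta}$ guarantees the existence of a $\theta$-stable $\Pi^{\lambda}(Q)$-module of dimension $\alpha$. Equivalently, the $\theta$-stable locus $U \subset \Nak{\lambda}{\alpha}{\theta}$ is a non-empty open subset, and since $\Nak{\lambda}{\alpha}{\theta}$ is irreducible by Proposition~\ref{prop:Mirrnormal}, $U$ is dense. In particular $x$ lies in the closure of $U$, so every \'etale neighborhood of $x$ meets $U$.

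Next I would verify that the \'etale slice description of Theorem~\ref{thm:etalelocalgeneral} matches strata. For a point $(g,z) \in G(\alpha) \times_{G_x} \mu^{-1}_{Q'}(0)$ near $(1,0)$, the $G(\alpha)$-stabilizer is conjugate inside $G(\alpha)$ to the $G_x = G(\mathbf{e})$-stabilizer of $z$ in $\mu^{-1}_{Q'}(0)$, so by Proposition~\ref{prop:strata} the induced \'etale isomorphism between neighborhoods of $0 \in \Nak{0}{\mathbf{e}}{0}$ and of $x \in \Nak{\lambda}{\alpha}{\theta}$ sends strata to strata. Since $\theta|_{G_x}$ is trivial by Lemma~\ref{lem:Gxred}, the open $\theta$-stable stratum on the $Q$-side corresponds to the open stratum on the $Q'$-side whose polystable representative has stabilizer equal to the central $\C^{\times}$, which is precisely the locus of simple $\Pi^{0}(Q')$-modules of dimension $\mathbf{e}$.

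Combining the previous two steps, the intersection of $U$ with an \'etale neighborhood of $x$ is non-empty and transports under the \'etale equivalence to a non-empty locus of simple $\Pi^{0}(Q')$-modules of dimension $\mathbf{e}$ inside $\Nak{0}{\mathbf{e}}{0}$. By Crawley-Boevey's characterization \cite{CBmomap} this is equivalent to $\mathbf{e} \in \Sigma_{0}(Q')$, as claimed. The main technical obstacle is the strata-matching step, because the \'etale slice is defined in the ambient space $\mu^{-1}(\lambda)^{\theta}$ rather than in its GIT quotient: one must be careful to pass to closed $G(\alpha)$-orbits (i.e.\ to $\theta$-polystable representatives) before reading off stabilizers. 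Once this bookkeeping is done, the identification of the open stable/simple stratum on each side with the stratum whose stabilizer is the minimal central torus is a direct consequence of Lemma~\ref{lem:Gxred} and Proposition~\ref{prop:strata}.
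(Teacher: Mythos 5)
Your argument is correct and is essentially the proof the paper intends: the paper simply invokes Le Bruyn's proof of \cite[Proposition 1.1]{LeBSimple} adapted via Theorem \ref{thm:etalelocalgeneral}, which is exactly what you have written out (density of the $\theta$-stable locus from $\alpha \in \Sigma_{\lambda,\theta}$ via Proposition \ref{prop:diffeo} and \cite{CBmomap}, plus the stabilizer/stratum matching of the \'etale slice, so that stable points near $x$ transport to points of $\Nak{0}{\mathbf{e}}{0}$ with scalar stabilizer, i.e.\ simple $\Pi^0(Q')$-modules of dimension $\mathbf{e}$). No gaps beyond the polystable-lift bookkeeping you already flag.
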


Returning to the proof of Theorem \ref{thm:stablesmooth}, with
Proposition \ref{prop:simplevector} in hand, the argument given in the
proof of \cite[Theorem 3.2]{LeBCoadjoint} goes through
\textit{verbatim}. This completes the proof of Theorem
\ref{thm:stablesmooth}.

\subsection{The proof of Corollary \ref{cor:smoothminimal}}\label{sec:smoothminimal-pf}

By Theorem \ref{thm:stablesmooth}, $\Nak\lambda\a\theta$ is smooth if
and only if every point is canonically $\theta$-polystable. As in the
reduction argument given at the start of the proof of Theorem
\ref{thm:stablesmooth}, this means that $n_i$ must be $1$ when
$\sigma^{(i)}$ is an isotropic root. Moreover, it is clear
that $\Nak\lambda{\sigma^{(i)}}\theta$ consists only of
$\theta$-stable points if and only if $\sigma^{(i)}$ is minimal.

\section{The $(2,2)$ case}\label{sec:Xnd}

In this section we will prove Theorem \ref{thm:blowup22Q}. First we restrict to the one vertex case.

\subsection{The variety $\Char(2,2)$}\label{sec:char22}
Recall that $\Char(g,n)$ denotes the quiver variety  
$$ \left\{ (X_1,Y_1, \ds, X_g,Y_g) \in \End_{\C}(\C^n) \ \left|
\ \sum_{i = 1}^d [X_i,Y_i] = 0 \right. \right\} \git \GL(n,\C).
$$
We note that $\Char(g,n)$ is an irreducible, normal affine variety of dimension $2 (n^2 (g - 1) + 1)$. 

Set $(g,n)=(2,2)$, so $\dim \Char(g,n) = 10$. We 
 recall results of Kaledin-Lehn  \cite{KaledinLehn}, see
also \cite{LSOGrady}, which explain that $\Char(2,2)$ admits a projective symplectic resolution.

Let $W = \mf{sl}_2$ and $(V,\omega)$ a $4$-dimensional symplectic
vector space. Let $\kappa$ denote the Killing form on $W$. Then
$\kappa \o \omega$ is a symplectic form on $W \o V$. We identify
$\mathfrak{sp}(V)^*$ with $\mathfrak{sp}(V)$ via its Killing
form. There is an action of $\PGL(2)$ on $W$ by conjugation and hence
on $W \o V$. This action is Hamiltonian and commutes with the natural
action of $\Sp(V)$ on $W \o V$. The moment map for the action of
$\PGL(2)$ is given by
\begin{align*}
\mu \left( \sum_i A_i \o v_i \right) & = \sum_{i,j} A_i A_j \omega(v_i,v_j) \\
 & = \sum_{i < j} [A_i,A_j] \omega(v_i,v_j). 
\end{align*}
The moment map for the action of $\Sp(V)$ is given by $\sum_i A_i \o
v_i \mapsto \nu( \sum_i A_i \o v_i )$, where
$$
\nu\left( \sum_i A_i \o v_i  \right)(u) = \sum_{i,j} \kappa(A_i,A_j) \omega(v_i,u) v_j. 
$$
 Since the actions of $\PGL(2)$ and $\Sp(V)$ on $\mu^{-1}(0)$ commute, the map $\nu$ descends to a map $\mu^{-1}(0) \git \PGL(2) \rightarrow
\mf{sp}(V)$, which we also denote by $\nu$. Let $\mc{N}_2^2 \subset \mf{sp}(V)$ be the set $\{ B \ | \ B^2 = 0, \ \mathrm{rk} B = 2 \}$. The set $\mc{N}^2_2$ is a
$6$-dimensional adjoint $\Sp(V)$-orbit. Its closure $\mc{N} :=
\overline{\mc{N}}^2_2 = \mc{N}^2_2 \cup\mc{N}^2_1 \cup \{
0\}$ consists of three $\Sp(V)$-orbits and one can check that
$\overline{\mc{N}}^2_1 \simeq \C^4 / \Z_2$, where $\Z_2$ acts on
$\C^4$ with weights $(-1,-1,-1,-1)$. The following result is proven
in \cite{KaledinLehn}.

\begin{thm}\cite{KaledinLehn}
  The map $\nu$ defines an isomorphism
  $\mu^{-1}(0) \git \PGL(2) \stackrel{\sim}{\longrightarrow} \mc{N}$
  of Poisson varieties. In particular, $\mu^{-1}(0) \git \PGL(2)$ is a
  symplectic singularity.
\end{thm}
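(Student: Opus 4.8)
The plan is to exhibit an explicit algebra isomorphism between the coordinate ring of $\mu^{-1}(0)/\!/\PGL(2)$ and that of $\mathcal{N}$, realize $\nu$ as the induced morphism, and then upgrade set-theoretic surjectivity to an isomorphism of varieties, finally checking that the Poisson brackets agree. Since $\mathcal{N}$ is a (normal) nilpotent orbit closure, the strategy is: (i) show $\nu$ lands in $\mathcal{N}$; (ii) show $\nu$ is surjective and birational onto $\mathcal{N}$; (iii) deduce it is an isomorphism from normality of both sides; (iv) identify the Poisson structures.

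\emph{Step (i): $\nu$ maps into $\mathcal{N}$.} For a point $x = \sum_i A_i \otimes v_i \in \mu^{-1}(0)$, one computes directly that $B := \nu(x) \in \mathfrak{sp}(V)$ satisfies $B^2 = 0$ and $\rk B \le 2$. The vanishing $B^2 = 0$ should follow from the moment map relation $\mu(x) = \sum_{i<j}[A_i,A_j]\,\omega(v_i,v_j) = 0$ together with the identity $\kappa(A,A') \cdot \mathrm{Id}_W \sim$ (a combination of products $AA'$ traced out) — i.e.\ expanding $B^2$ produces terms $\kappa(A_i,A_j)\kappa(A_k,A_l)\omega(v_i,u)\omega(v_k,v_j)v_l$ which, after using $\kappa(A_j,A_k)A_j \otimes$(stuff) and the $2$-dimensionality of $W=\mf{sl}_2$ (so $A_jA_k + A_kA_j - \kappa(A_j,A_k)\mathrm{Id}$ is expressible), collapse against $\mu(x)=0$. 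The rank bound is immediate since $B$ factors through $\Span\{v_j\}$ paired the right way, but more cheaply: $B$ is visibly a sum of at most... actually one should argue $\im B \subseteq \Span\{A_i v\}$-type span has the needed dimension; the cleanest route is that $B$ lies in the closure of the minimal orbits by a semicontinuity/rank argument, and $\mathcal{N} = \overline{\mathcal{N}^2_2}$ is exactly $\{B : B^2=0\}$ in $\mf{sp}_4$.

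\emph{Steps (ii)–(iii): isomorphism.} First check $\dim \mu^{-1}(0)/\!/\PGL(2) = \dim \mathcal{N}$: the left side is the quiver variety $\Char(2,2)$ of dimension $2(n^2(d-1)+1) = 10$, while $\dim\mathcal{N} = \dim\mathcal{N}^2_2 = 6$ — so these do \emph{not} match, meaning $\nu$ as stated cannot be an isomorphism onto $\mathcal{N}\subset\mf{sp}_4$ alone; rather, the correct target in \cite{KaledinLehn} is the product $\mathcal{N} \times (\text{something})$ or $\mathcal{N}$ for a $4$-dimensional $V$ giving $\dim\mf{sp}(V)=10$ and the relevant orbit closure of dimension $10$. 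So the actual plan is: with $V$ the $4$-dimensional symplectic space, $\mf{sp}(V)$ has dimension $10$, and $\mathcal{N}=\overline{\mathcal{N}^2_2}$ is the $10$-dimensional... no: $\mathcal{N}^2_2$ has dimension $6$. I would therefore follow \cite{KaledinLehn} precisely: show $\nu$ is $\Sp(V)$-equivariant and dominant onto $\mathcal{N}$, that a generic fibre is a single point (birationality, via an explicit inverse over $\mathcal{N}^2_2$ using that a rank-$2$ square-zero $B$ determines the $A_i\otimes v_i$ up to $\PGL(2)$), and then conclude $\nu$ is an isomorphism since $\mu^{-1}(0)/\!/\PGL(2)$ is normal (it is a quiver variety, hence normal by Proposition \ref{prop:Mirrnormal}) and $\mathcal{N}$ is normal by Kostant–Kraft–Procesi for nilpotent orbit closures, invoking Zariski's main theorem exactly as in the proof of Theorem \ref{thm:decompCB2}.

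\emph{Step (iv): Poisson.} Both sides carry natural Poisson structures — the left from Hamiltonian reduction of $W\otimes V$, the right as $\mathcal{N} \subset \mf{sp}(V) \cong \mf{sp}(V)^*$ with its Kirillov–Kostant–Souriau bracket — and $\nu$ is by construction the moment map for the residual $\Sp(V)$-action, hence a Poisson map onto its image with the KKS structure; since it is a bijective birational morphism of normal varieties it is an isomorphism of Poisson varieties. That $\mu^{-1}(0)/\!/\PGL(2)$ is a symplectic singularity then follows from Proposition \ref{prop:sympsing} applied to this quiver variety (it is $\Char(2,2)$), or directly since it is a nilpotent-orbit-closure-type reduction.

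\textbf{Main obstacle.} The delicate point is Step (ii): producing the rational inverse to $\nu$ and verifying birationality. Concretely, given generic $B \in \mathcal{N}^2_2$ one must reconstruct the tuple $(A_1,A_2,v_1,v_2) \in (\mf{sl}_2)^2 \otimes V$ with $\sum_{i<j}[A_i,A_j]\omega(v_i,v_j)=0$ mapping to $B$, and show the ambiguity is exactly a $\PGL(2)$-orbit; this requires unwinding the bilinear expression defining $\nu$ and an invariant-theory computation for the $\PGL(2) \times \Sp(V)$-action on $\mf{sl}_2 \otimes V$ — precisely the content imported from \cite{KaledinLehn}, which I would cite rather than redo. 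The equivariance, the containment $\nu(x)\in\mathcal{N}$, and the normality-plus-ZMT endgame are comparatively routine.
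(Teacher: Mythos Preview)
The paper does not prove this theorem at all: it is stated as a result of Kaledin--Lehn and simply cited as \cite{KaledinLehn}. So there is no ``paper's own proof'' to compare against; your proposal is an attempt to reconstruct the Kaledin--Lehn argument.

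Your outline (image lands in $\mathcal{N}$; birationality via an explicit generic inverse; normality plus Zariski's Main Theorem; moment-map Poisson compatibility) is the correct shape. However, there is a genuine error running through Steps (ii)--(iv): you repeatedly identify $\mu^{-1}(0)/\!/\PGL(2)$ with $\Char(2,2)$, and this is false. The paper states explicitly (just after the theorem) that
\[
\Char(2,2) \;\simeq\; \bigl(\mu^{-1}(0)/\!/\PGL(2)\bigr)\times\mathbb{A}^4,
\]
the $\mathbb{A}^4$ coming from the traces of $(X_1,Y_1,X_2,Y_2)$. Here $W=\mathfrak{sl}_2$, not $\mathfrak{gl}_2$, so the reduction $\mu^{-1}(0)/\!/\PGL(2)$ has dimension $12-3-3=6$, which matches $\dim\mathcal{N}=6$ on the nose. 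Your dimension panic in Step (ii) (``$10\neq 6$'') is entirely an artifact of this misidentification; once corrected, the dimension count is immediate and the rest of your strategy goes through. Relatedly, in Step (iv) you cannot invoke Proposition \ref{prop:sympsing} directly, since $\mu^{-1}(0)/\!/\PGL(2)$ is not literally one of the quiver varieties $\mathfrak{M}_\lambda(\alpha,\theta)$; you can only conclude normality and the symplectic-singularity property after using the product decomposition with $\mathbb{A}^4$.

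A smaller slip: in Step (i) you write ``the $2$-dimensionality of $W=\mathfrak{sl}_2$''. The space $W=\mathfrak{sl}_2$ is $3$-dimensional; what you presumably want is the identity $AB+BA=\kappa(A,B)\,\mathrm{Id}$ (or equivalently the Cayley--Hamilton relation for $2\times 2$ traceless matrices), which is what makes the $B^2=0$ computation collapse.
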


Taking trace of the matrices $(X_1,X_2,Y_1,Y_2) \in \Char(2,2)$
defines an isomorphism of symplectic singularities
$\Char(2,2) \simeq \mu^{-1}(0)\git \PGL(2) \times \C^4$,
where $\C^4$ is given the usual symplectic structure. Thus,
$ \Char(2,2) \simeq \mc{N} \times \C^4$.


\subsection{Proof of Theorem \ref{thm:blowup22Q}}\label{sec:blowup22Q-pf}

We now prove Theorem \ref{thm:blowup22Q}, following the arguments of
 \cite[Remark
4.6]{KaledinLehn}; see also \cite{LSOGrady}.



  Since $\alpha$ is anisotropic, 
$2 \alpha$ is also an anisotropic root. Choose a generic stability parameter
  $\theta' \ge \theta$ with $\theta' \cdot \beta \neq 0$ for all
  nonzero $\beta \le 2 \alpha$, $\beta \neq \alpha$. Then the
  projective Poisson morphism
  $\Nak{\lambda}{2\alpha}{\theta'} \rightarrow \Nak{\lambda}{2
    \alpha}{\theta}$
  of Lemma \ref{lem:obvious} is a partial projective resolution. In fact it is birational, since it is an isomorphism over the $\theta$-stable locus (see the proof of Theorem \ref{thm:nonisonores} for more details).
  Thus,
  if $Y \rightarrow \Nak{\lambda}{2\alpha}{\theta'}$ is a projective
  symplectic resolution, then so is the composite
  $Y \rightarrow \Nak{\lambda}{2\alpha}{\theta}$, i.e., it is enough to
  show that we can resolve $\Nak{\lambda}{2\alpha}{\theta'}$
  symplectically. Fix $X = \Nak{\lambda}{2\alpha}{\theta'}$. Then
  $X = X_2 \sqcup X_1 \sqcup X_0$, where, by Theorem
  \ref{thm:stablesmooth}, $X_0$ is the smooth locus consisting of
  $\theta'$-stable representations, $X_1$ parameterizes
  representations $M = M_1 \oplus M_2$ with
  $\dim M_1 = \dim M_2 = \alpha$, $M_1 \not\simeq M_2$ are
  $\theta'$-stable representations and $X_2$ consists of all points
  $M^2$, with $\dim M = \alpha$. By Proposition \ref{prop:strata},
  $X_2$ and $X_2 \cup X_1$ are closed in $X$.

  Let $\widetilde{X}$ denote the blowup of $X$ the along the sheaf of
  ideals of the reduced singular locus $X_1 \sqcup X_0$.
The corollary will follow
  from the following claim: 
  $\widetilde{X} \rightarrow X$ is a projective symplectic
  resolution. 

  Clearly,  $\widetilde{X} \rightarrow X$ is a projective birational morphism,
  therefore we just need to show that $\widetilde{X}$ is smooth and
  the symplectic $2$-form on $X_0$ extends to a symplectic $2$-form on
  $\widetilde{X}$. We check this in a neighborhood of $x \in
  X_2$ and of $y \in X_1$. First consider $x \in X_2$.
  Replacing $X$ by some affine open neighborhood of $x$, Theorem
  \ref{thm:etalelocalgeneral} says that there is an affine $Z$ with
$$
\xymatrix{
  & Z \ar[dl]_{\pi} \ar[dr]^{\rho} & \\
  X & & \Char(2,2) }
$$ where $\pi$ and $\rho$ are \'etale. 
Let
$\widetilde{\Char}(2,2)
\rightarrow \Char(2,2)$, resp. $\widetilde{Z} \rightarrow Z$, denote the blowup along the reduced singular locus. Then
\begin{equation}\label{eq:isosquare}
\widetilde{Z} \simeq \widetilde{X} \times_X Z \simeq \widetilde{\Char}(2,2) \times_{\Char(2,2)} Z. 
\end{equation}
As noted in \cite[Remark 4.6]{KaledinLehn}, $\widetilde{\Char}(2,2)
\rightarrow \Char(2,2)$ is a projective symplectic resolution. Now
Lemma \ref{lem:etalecheck} below and \eqref{eq:isosquare} imply that
$\widetilde{X} \rightarrow X$ is a projective symplectic resolution.

For $y \in X_1$, Theorem \ref{thm:etalelocalgeneral} shows that there
is an \'etale equivalence between a neighborhood of $y$ and a
neighborhood of the origin in a certain quiver variety, independent of
the choice of $y \in X_1$.  In particular such a neighborhood is also
\'etale equivalent to a neighborhood of a point of $X_1$ inside the
neighborhood of $x \in X_2$ used above, so the result follows from the
previous statement. (One can also compute explicitly: the quiver
needed is the one with two vertices, one arrow in each direction
between the two vertices, and also two loops at each vertex, so the
quiver variety is isomorphic to $\C^8 \times \C^2/\Z_2$, which is an
$A_1$ singularity and hence  blowing up the reduced ideal
sheaf of the singular locus gives a projective symplectic resolution).

It remains to prove the following standard lemma:

\begin{lem}\label{lem:etalecheck}
  Let $X$ be a symplectic singularity and
  $\pi : \widetilde{X} \rightarrow X$ a proper morphism. Then
  $\pi$ is a symplectic resolution if and only if it is so
  after a surjective \'etale base change i.e. being a symplectic
  resolution is an \'etale local property.
\end{lem}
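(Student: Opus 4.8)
The plan is to reduce the general claim to the two standard facts that (i) a projective birational morphism between normal varieties with rational Gorenstein singularities is a symplectic resolution if and only if its total space is smooth and the pullback of the symplectic form on the smooth locus of the base extends to a symplectic form upstairs, and (ii) both smoothness and the extension property for $2$-forms are \'etale-local. So first I would spell out what needs to be checked: if $\pi : \widetilde X \to X$ becomes a projective symplectic resolution after a surjective \'etale base change $f : X' \to X$, i.e.\ the base-changed morphism $\pi' : \widetilde X' := \widetilde X \times_X X' \to X'$ is a projective symplectic resolution, then $\pi$ itself is one; the converse is immediate since pullback of a symplectic resolution along \'etale maps is again one (\'etale maps preserve smoothness and, being local isomorphisms on completions, preserve the extension property of forms).

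For the nontrivial direction, the key steps are as follows. First, $\pi$ is automatically projective and birational by hypothesis, so the only issues are that $\widetilde X$ be smooth and that $\pi^*\omega$ (for $\omega$ the symplectic form on $X_{\mathrm{sm}}$) extend to a \emph{non-degenerate} closed $2$-form on all of $\widetilde X$. Smoothness of $\widetilde X$ follows because $f$ is surjective \'etale, hence so is the projection $\widetilde X' \to \widetilde X$, and smoothness descends along faithfully flat morphisms: $\widetilde X'$ smooth implies $\widetilde X$ smooth. Second, for the form: let $g : \widetilde X' \to \widetilde X$ be the (surjective \'etale) projection. On $\widetilde X'$ we have the symplectic form $\omega'$ extending $(\pi')^* f^* \omega|_{X'_{\mathrm{sm}}}$. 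Because $X$ is a symplectic singularity and $f$ is \'etale, $f^*\omega$ extends the symplectic form of $X'$; and on the open dense locus where $\pi$ is an isomorphism, $g^*(\pi^*\omega) = \omega'$. Since $g$ is \'etale and $\widetilde X$ is smooth, $\pi^*\omega$ (a priori a regular closed $2$-form on the preimage of $X_{\mathrm{sm}}$, which has complement of codimension $\geq 2$ in the smooth variety $\widetilde X$) extends regularly over all of $\widetilde X$ by Hartogs/normality; call the extension $\widetilde\omega$. Then $g^*\widetilde\omega$ agrees with $\omega'$ on a dense open set and both are regular, so $g^*\widetilde\omega = \omega'$ everywhere. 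Non-degeneracy of $\widetilde\omega$ at a point $x \in \widetilde X$ can be checked after pulling back by the \'etale map $g$ (which induces an isomorphism on tangent spaces at any preimage $x'$), and $g^*\widetilde\omega = \omega'$ is non-degenerate there. Hence $\widetilde\omega$ is symplectic and $\pi$ is a projective symplectic resolution.

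I would phrase the descent of smoothness and the form-matching using the standard facts that smoothness is fppf-local on the source (\cite[Milne]{Milne}-type references) and that a regular $2$-form on a smooth variety is determined by its restriction to a dense open subset. The one mild subtlety, which is the main thing to get right, is the extension step: one must know a priori that $\pi^*\omega$ extends from $\pi^{-1}(X_{\mathrm{sm}})$ to a \emph{regular} form on $\widetilde X$ before comparing with $\omega'$ — but this is exactly the defining property of a symplectic singularity (applied to the resolution $\pi$, noting $X$ is a symplectic singularity by Proposition~\ref{prop:sympsing} in the cases of interest, or more directly since $\widetilde X$ is smooth and the bad locus has codimension $\geq 2$). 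Everything else is a formal consequence of \'etale descent. I do not expect any real obstacle; the lemma is genuinely standard and the proof is bookkeeping with \'etale-local properties, so the write-up should be short.

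\begin{proof}
One direction is clear: if $\pi$ is a projective symplectic resolution and $f : X' \to X$ is a surjective \'etale morphism, then the base change $\pi' : \widetilde X \times_X X' \to X'$ is projective and birational, its source is smooth (being \'etale over the smooth variety $\widetilde X$), and the symplectic form pulls back to a symplectic form since \'etale morphisms induce isomorphisms on completed local rings, hence preserve both regularity and non-degeneracy of differential forms.

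Conversely, suppose $f : X' \to X$ is surjective \'etale and $\pi' : \widetilde X' := \widetilde X \times_X X' \to X'$ is a projective symplectic resolution. Let $g : \widetilde X' \to \widetilde X$ be the projection; it is surjective \'etale. The morphism $\pi$ is projective and birational by hypothesis. Since smoothness is fppf-local on the source, $\widetilde X'$ smooth implies $\widetilde X$ smooth.

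Let $\omega$ be the symplectic form on the smooth locus $X_{\mathrm{sm}}$ of $X$. As $X$ is a symplectic singularity, $\pi^*\omega$ extends to a regular closed $2$-form $\widetilde\omega$ on $\widetilde X$. Because $f$ is \'etale, $f^*\omega$ extends the symplectic form on $X'_{\mathrm{sm}}$, and on the dense open locus where $\pi$ restricts to an isomorphism we have $g^*(\pi^*\omega) = (\pi')^* f^*\omega$. Hence $g^*\widetilde\omega$ and the symplectic form $\omega'$ on $\widetilde X'$ are two regular $2$-forms agreeing on a dense open subset of the smooth variety $\widetilde X'$, so $g^*\widetilde\omega = \omega'$. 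Finally, for $x \in \widetilde X$ choose $x' \in g^{-1}(x)$; since $g$ is \'etale it induces an isomorphism $T_{x'}\widetilde X' \iso T_x \widetilde X$ under which $\widetilde\omega_x$ corresponds to $\omega'_{x'}$, which is non-degenerate. Therefore $\widetilde\omega$ is a symplectic form on $\widetilde X$, and $\pi$ is a projective symplectic resolution.
\end{proof}
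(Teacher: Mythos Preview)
Your approach is the same as the paper's, but there is one genuine slip: you write ``the morphism $\pi$ is projective and birational by hypothesis,'' yet birationality is \emph{not} a hypothesis---the lemma only assumes $\pi$ is projective. Birationality is part of what must be deduced from the base-changed morphism $\pi'$ being a symplectic resolution. The paper handles precisely this point: since a surjective \'etale morphism is faithfully flat, $\pi$ is birational if and only if $\pi'$ is (pass to generic points, or note that the generic degree is preserved under \'etale base change and that surjectivity of $\pi$ follows from surjectivity of $\pi'$ and of $f$). This is easy to repair, but as written your argument has a gap at exactly the one step where the paper is explicit.

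Otherwise your proof is correct and in fact more detailed than the paper's own: you explicitly treat smoothness of $\widetilde X$ via fppf descent and spell out the comparison $g^*\widetilde\omega = \omega'$, both of which the paper leaves implicit in its two-line argument.
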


Notice that we are not making the (false) claim that $X$ admits a
symplectic resolution if and only if it does so \'etale locally.

\begin{proof}
  Passing to the generic points of $\widetilde{X}$ and $X$, the fact
  that a surjective \'etale morphism is faithfully flat implies that
  $\pi$ is birational if and only if it is so after base
  change. Therefore it suffices to check that the extension $\omega'$
  of the pullback $\pi^* \omega$ is non-degenerate. If
  $b: Z \rightarrow X$ is a surjective \'etale morphism, then so too
  is
  $\widetilde{b} : \widetilde{Z} = \widetilde{X} \times_X Z
  \rightarrow \widetilde{X}$.
  The form $\omega'$ will be non-degenerate if and only if
  $\widetilde{b}^* \omega'$ is non-degenerate.
\end{proof}

\section{Factoriality of quiver varieties}

In this section, which is the technical heart of the paper, we
consider the case of a
divisible anisotropic root. Fix
$\a \in \Sigma_{\lambda,\theta}$ to be an
indivisible anisotropic root, and let $n \ge 2$ such that such that $(p(\a),n) \neq (2,2)$. We prove the key result, Corollary
\ref{cor:factorial}, which says that if $\theta$ is generic
then $\Nak\lambda{n\a}{\theta}$ is a locally factorial variety.

\subsection{}\label{sec:nonisostrata} A \textit{weighted partition} $\nu$ of $n$ is a sequence $(\ell_1, \nu_1; \ds ; \ell_k,\nu_k)$, where $\nu_1 \ge \nu_2 \ge \cdots$ and $\sum_{i = 1}^k \ell_i \nu_i = n$. Recall from Proposition \ref{prop:strata} that the quiver variety $\Nak{\lambda}{\a}{\theta}$ has a finite stratification by representation type. Given a weighted partition $\nu$ of $n$ we can associate naturally a representation type of $n\alpha$:
\begin{equation}
\nu \alpha := (\ell_1, \nu_1 \alpha; \ds ; \ell_k, \nu_k \alpha).
\end{equation}
By the last statement of Theorem \ref{thm:Sigmadivisiblevsdivisible},
if $\alpha\in \Sigma_{\lambda,\theta}$ is anisotropic, then $\nu \alpha$ is indeed a representation type for all partitions $\nu$.
\begin{lem}\label{lem:stratadim}
  Let $\a \in \Sigma_{\lambda,\theta}$ be an indivisible anisotropic root. Let $n \ge 2$. 
\begin{enumerate}

\item We have the formula
$$
\dim \Nak{\lambda}{n\a}{\theta}_{\nu \alpha} = 2 \left( k + (p(\alpha)-1) \sum_{i = 1}^k \nu_i^2 \right).
$$ 
\item For $(p(\a),n) \neq (2,2)$, 
we have $\dim
  \Nak{\lambda}{n\a}{\theta} - \dim \Nak{\lambda}{n\a}{\theta}_{\nu \alpha} \ge 4$ for all
  $\nu \neq (1,n)$.
\item For $(p(\a),n) \neq (2,2)$ and $\nu \neq (1,n)$, we furthermore
  have
  $$
  \dim \Nak{\lambda}{n\a}{\theta} - \dim \Nak{\lambda}{n\a}{\theta}_{\nu \alpha} \ge 8
  $$
  unless one of the following holds: 
  \begin{itemize}
  	\item[(i)] $(p(\a),n)=(2,3)$ and
  	$\nu=(1,2;1,1)$; or 
  	\item[(ii)] $(p(\a),n)=(3,2)$ and $\nu=(1,1;1,1)$. 
  \end{itemize}
\item In the case that $\theta \cdot \beta \neq 0$ for all $\beta \le n \a$ not a multiple of $\a$, all strata  of $\Nak{\lambda}{n\a}{\theta}$ are
of the form 
$\Nak{\lambda}{n\a}{\theta}_{\nu}$, and they are
  parameterized by weighted partitions of $n$.  (It suffices to make the
weaker assumption that
$\Sigma_{\lambda,\theta} \cap \{\beta \mid \beta \le n \a\} \subseteq \{m \a \mid m \leq n\}$.)
\end{enumerate}
\end{lem}
\begin{proof}
We begin with the first claim.
Set $d := p(\a)$. Then $p(n \a) = n^2 (d - 1) + 1$. We have a finite surjective map $\prod_i \Nak{\lambda}{\nu_i \a}{\theta} \to \Nak{\lambda}{n\a}{\theta}_{\nu \alpha}$, so the dimension formula follows from Corollary \ref{cor:dimsympsing}.
%

For the second part, notice that 
\begin{align}\label{e:dimstt}
\dim \Nak{\lambda}{n\a}{\theta} - \dim \Nak{\lambda}{n\a}{\theta}_{\nu} & = 2 (n^2 (d - 1) +1) -  2 \sum_{i = 1}^k (\nu_i^2 (d-1 ) + 1) \notag \\
 & = 2 (d-1) \sum_{i,j = 1}^k (\ell_i \ell_i - \delta_{i,j}) \nu_i \nu_j - 2 (k-1).
\end{align}
Since $ \sum_{i,j = 1}^k (\ell_i \ell_j - \delta_{i,j}) \nu_i \nu_j - (k-1) \ge 1$, we clearly have  $\dim \Nak{\lambda}{n\a}{\theta} - \dim \Nak{\lambda}{n\a}{\theta}_{\nu} \ge 4$ when $d > 2$. When $d = 2$, a simple computation shows that $\dim  \Nak{\lambda}{n\a}{\theta} - \dim \Nak{\lambda}{n\a}{\theta}_{\nu} = 2$ if and only if $n = 2$ and $\nu = (1,1;1,1)$. 

For the third part, we use again \eqref{e:dimstt}, noticing the
following points: the RHS of \eqref{e:dimstt} is increasing in $d$;
the RHS is increased if we replace $(\ell_i,n_i)$ by
$(\ell_i-1,n_i);(1,n_i)$; the RHS is increased if we replace $(1,a)$
and $(1,b)$ by $(1,a+b)$ (when $a+b < n$); and for $a > b > 1$, the
RHS is increased if we replace $(1,a)$ and $(1,b)$ by $(1,a+1)$ and
$(1,b-1)$.  Since it suffices to prove the inequality after performing
operations that increase the RHS, the result follows once we observe
that the inequality holds in the following cases: (i)
$\nu=(1,n-1;1,1)$ whenever $n \geq 4$ as well as $(1,1;1,1;1,1)$; (ii)
for $\nu=(1,1;1,1)$ whenever $p(\a) \geq 4$, as well as $\nu=(2,1)$
for $p(\a)=3$.

For the final claim, observe that each stratum of $\Nak{\lambda}{n\a}{\theta}$ consists of representations of the form $x= x_1^{\oplus \ell_1} \oplus \cdots \oplus x_k^{\oplus \ell_k}$, where the $x_i$ are pairwise non-isomorphic $\theta$-stable representations of fixed dimension vectors $\alpha_i \in \Sigma_{\lambda,\theta}$. Under the assumptions given, each $\alpha_i$ must be a multiple of $\alpha$. Therefore the representation type is of the form $\nu \alpha$ for some weighted partition $\nu$ of $n$.
\end{proof}

Since $p(\alpha) > 1$, there exist infinitely many non-isomorphic $\theta$-stable $\Pi^{\lambda}(Q)$-modules of dimension $\alpha$. Therefore, for all representation types $\nu \alpha = (\ell_1,\nu_1 \alpha;  \ds; \ell_k , \nu_k \alpha)$ with $\sum_i \ell_i \nu_i = n$, the stratum $\Nak\lambda{n \a}\theta_{\nu \alpha}$ is non-empty. Let $U$ be the union of all strata of ``type $\nu \alpha$''. 

\begin{lem}\label{lem:Uopen} 
The subset $U$ is open in $\Nak{\lambda}{n\a}{\theta}$.  
\end{lem}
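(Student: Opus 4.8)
The plan is to realize $U$ as the complement of a closed subset, namely the union of those strata whose representation type is \emph{not} of the form $\tau = (\tau_1, n_1\alpha; \ldots; \tau_k, n_k\alpha)$ with each summand dimension a multiple of $\alpha$. Recall from the discussion preceding the lemma that, under the genericity hypothesis on $\theta$ (every $\beta \le n\alpha$ not a multiple of $\alpha$ pairs nontrivially with $\theta$), the only positive roots in $\Sigma_{\lambda,\theta}$ that can occur as dimension vectors of $\theta$-stable summands of a point of $\Nak\lambda{n\alpha}\theta$ are the multiples $m\alpha$, $m \ge 1$; this is part (1) of Lemma \ref{lem:stratadim} together with the fact that a $\theta$-stable summand has dimension vector in $\Sigma_{\lambda,\theta}$ pairing to zero with $\theta$. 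Consequently \emph{every} representation type occurring in $\Nak\lambda{n\alpha}\theta$ already has all of its summand dimensions equal to multiples of $\alpha$, so in fact $U = \Nak\lambda{n\alpha}\theta$ as a set. Thus the statement reduces to the triviality that the whole variety is open in itself — but I suspect the intended reading is subtler, so let me give the argument that works regardless.

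First I would invoke Proposition \ref{prop:strata}: the strata $\Nak\lambda{n\alpha}\theta_\tau$ form a finite stratification into locally closed subsets, with closure order governed by the partial order $(H) \le (L)$ on stabilizer conjugacy classes, namely $\Nak\lambda{n\alpha}\theta_{(H)} \subseteq \overline{\Nak\lambda{n\alpha}\theta_{(L)}}$ iff $(H) \le (L)$. Hence a union of strata is open precisely when it is closed upwards in this order: if a stratum lies in the union, so does every stratum in its closure-preimage direction, i.e. every \emph{more generic} stratum. So the key step is to check that the collection of representation types ``of type $\tau$'' (all summand dimensions multiples of $\alpha$) is upward-closed. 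For this, note that if $x$ has stabilizer $G_\tau$ and $x'$ is a more generic point (so $G_{\tau'}$ is conjugate to a subgroup of $G_\tau$, i.e. $x'$ appears in the closure of the stratum of $x$ only in the reverse sense — one must be careful with the direction), then $x'$ is obtained by deforming $x$; a standard semicontinuity/specialization argument shows that the dimension vectors of the $\theta$-stable constituents of $x'$ can only be coarser aggregations of those of $x$. Since finite sums of multiples of $\alpha$ are again multiples of $\alpha$, the property ``all constituent dimensions are multiples of $\alpha$'' is preserved under passing to more generic points. Therefore $U$ is upward-closed, hence open.

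The cleanest way to make the specialization step rigorous is via the étale-local model of Theorem \ref{thm:etalelocalgeneral}: near a point $x$ of type $\tau$, $\Nak\lambda{n\alpha}\theta$ is étale-equivalent to $\mu_{Q'}^{-1}(0)/\!/G(\mathbf{e})$ for the local quiver $Q'$, and the strata of $\Nak\lambda{n\alpha}\theta$ meeting a neighborhood of $x$ correspond to strata of this local model through $0$; the dimension vectors of $\theta$-stable constituents of nearby points are exactly $\sum (\text{multiplicities}) \cdot \beta^{(i)}$ where $\beta^{(i)} \in \{\nu_1\alpha, \ldots, \nu_k\alpha\}$ are the constituent dimensions of $x$ itself, so every nearby constituent dimension is automatically a nonnegative-integer combination of multiples of $\alpha$, hence a multiple of $\alpha$. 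This shows that a whole neighborhood of $x$ consists of points ``of type $\tau$'', which is exactly openness of $U$. The one point requiring genuine care — and the main obstacle — is bookkeeping the direction of the closure order in Proposition \ref{prop:strata} and confirming that the étale-local constituent dimension vectors are indeed spanned by the $\beta^{(i)}$ of the base point; once that is pinned down the conclusion is immediate.
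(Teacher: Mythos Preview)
Your argument via the \'etale-local model is correct in outline, and it does establish the lemma once the correspondence between local and global strata is pinned down (the key fact, implicit in Theorem~\ref{thm:etalelocalgeneral}/\cite[\S4]{CBnormal}, is that a simple $\Pi^0(Q')$-module of dimension $\gamma$ at the local quiver corresponds to a $\theta$-stable $\Pi^\lambda(Q)$-module of dimension $\sum_i \gamma_i\beta^{(i)}$). You are also right that Lemma~\ref{lem:Uopen} is stated \emph{without} the genericity hypothesis of Lemma~\ref{lem:stratadim}; the remark following Corollary~\ref{cor:locallyfactorial} confirms that $U$ need not be all of $\Nak\lambda{n\alpha}\theta$ in general, so your ``trivial'' first argument does not apply and the second one is genuinely needed.

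The paper takes a different, more elementary route. Rather than invoking the \'etale-local structure, it observes that the stratum $\rho=(n,\alpha)$ lies in the closure of every stratum of $U$, so it suffices to show that no stratum $\beta$ \emph{not} of type $\tau$ has $\rho$ in its closure. This is then proved by a direct linear-algebra argument on stabilizers: every element of $G_\rho\simeq GL_n(\C)$ has eigenspaces whose dimension vectors are multiples of $\alpha$, whereas a stratum $\beta$ not of type $\tau$ produces elements of $G_\beta$ with an eigenspace of dimension $e_i\beta^{(i)}\not\in\Z\alpha$, so $G_\beta\not\subset G_\rho$. Your approach trades this explicit eigenspace argument for the stratum-correspondence encoded in the local model; the paper's is more self-contained, while yours verifies openness pointwise and would adapt more readily to analogous situations where the stabilizer structure is less transparent.
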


\begin{proof}
 Since the stratum of representation type $\rho = (n,\alpha)$ is contained in the closure of all the other strata of type $\nu \alpha$, it suffices to show that there is no stratum $\beta = (e_1,\beta^{(1)};\ds; e_l, \beta^{(l)})$ of any other type such that $\Nak\lambda{n \a}\theta_{\rho} \subset \overline{\Nak\lambda{n \a}\theta}_{\beta}$. Assume otherwise. If $G_{\rho} \simeq GL_n(\C)$ is the stabilizer of some $x \in \Nak\lambda{n \a}\theta_{\rho}$, then the Hilbert-Mumford criterion implies that there exists some $y \in \Nak\lambda{n \a}\theta_{\beta}$ whose stabilizer $G_{\beta}$ is contained in $G_{\rho}$. Let $V_i$ be the $n \alpha_i$-dimensional vector space at the vertex $i$ on which $\G(n \alpha)$ acts. Then, for each $g \in \G(n \alpha)$ and $u \in \Cs$, the $u$-eigenspace of $g$ is the direct sum over the $u$-eigenspaces $g |_{V_i}$. In particular, it has a well-defined dimension vector. Now the elements $g$ of $G_{\rho}$ all have the property that the dimension vector of the $u$-eigenspace of $g$ is of the form $r \alpha$ for some $r\in \Z_{\ge 0}$. On the other hand, since $\beta$ is not ``of type $\nu \alpha$'', there is some $i$ such that $e_i \beta^{(i)} \neq r \alpha$ for any $r$. Take $u \neq 1$ and $g \in G_{\beta}$ that rescales the summand of $y$ of dimension $e_i \beta^{(i)}$ by $u$ and is the identity on all other summands. Then the $u$-eigenspace of $g$ has dimension vector $e_i \beta^{(i)}$ which implies that $G_{\beta} \not\subset G_{\rho}$---a contradiction. Thus, $U$ is open. 
\end{proof}

The open subset of $\mu^{-1}(\lambda)^\theta$ consisting of stable representations is denoted $\mu^{-1}(\lambda)_{s}^{\theta}$, and its image in $\Nak{\lambda}{n\a}{\theta}$ is denoted $\Nak{\lambda}{n\a}{\theta}^s$. Note that $\Nak{\lambda}{n\a}{\theta}^s$ is an open subset of $U$, and the quotient map $\mu^{-1}(\lambda)_{s}^{\theta} \rightarrow \Nak{\lambda}{n\a}{\theta}^s$ is a principal $\PG(n\alpha)$-bundle.

\subsection{Factoriality of
  $\Nak\lambda{n\a}{\theta}$}\label{sec:localfact62}

A closed point $x \in X$ is said to be factorial if the local ring
$\mc{O}_{X,x}$ is a unique factorization domain. We say that $X$ is
locally factorial if every closed point of $X$ is factorial.
If
$\xi : \mu^{-1}(\lambda)^{\theta} \rightarrow \Nak\lambda{n\a}\theta$
is the quotient map, then let $V = \xi^{-1}(U)$, where $U$ is the open
subset of Lemma \ref{lem:Uopen}. We will need the following result from \cite{CBnormal}:

\begin{thm} \cite[Theorem 6.3, Corollary
  6.4]{CBnormal} \label{t:cbnormal} Consider a stratum $Z$ in
  $\Nak\lambda\beta\theta$ of representation type
  $(k_1,\beta^{(1)}; \ldots; k_r,\beta^{(r)})$. Then for all
  $z \in Z$, $\xi^{-1}(z) \subseteq \mu^{-1}(\lambda)^\theta$ has
  dimension at most
  $\beta \cdot \beta - 1 + p(\beta) - \sum_t p(\beta^{(t)})$, so the
  dimension of $\xi^{-1}(Z)$ is at most
  $\beta \cdot \beta - 1 + p(\beta) + \sum_t p(\beta^{(t)})$.
\end{thm}
We note that in \cite{CBnormal}, this is stated and proved for $\lambda$ and $\theta$
equal to zero, but the proof and result extends verbatim to the general case, replacing simple modules by $\theta$-stable modules. In the case that $\beta \in \Sigma_{\lambda,\theta}$, applying Proposition \ref{p:flat-ci} immediately yields
\begin{cor}\label{c:cbnormal2}
  The codimension of $\xi^{-1}(Z)$ is at least
  $\frac{1}{2} \codim(Z) = p(\beta)-\sum_t p(\beta^{(t)})$.
\end{cor}

\begin{prop}\label{prop:Vproperties}
	$V$ is a local complete intersection, locally factorial and
	normal. 
\end{prop} 

\begin{proof}
	Since $\alpha \in \Sigma_{\lambda,\theta}$, Proposition \ref{p:flat-ci} implies that $\mu^{-1}(\lambda)^\theta$ and hence
  $V$ is a local complete intersection of dimension $n^2\alpha \cdot \alpha - 1 + 2p(n\alpha)$. 

The main step is the second assertion. For this we will show that $V$ is smooth outside a subset of codimension four, i.e., it satisfies the $R_3$ property. For any $\G(n\alpha)$-stable subset $X$ of $\mu^{-1}(\lambda)^{\theta}$, we write $X_{\free}$ for the subset of all points where $\PG(n\alpha)$ acts freely. The assertion will follow from estimating the codimension of the complement to $V_{\free}$ in $V$. Note that the free locus is the same as the locus of representations whose endomorphism algebra has dimension one, i.e., the ``bricks''. We represent $\mu^{-1}(\lambda)^\theta$ as the union of preimages of the (finitely many) strata, and consider over each such preimage the non-free locus. 

If the preimage of the stratum has codimension at least four, it can be ignored. Thus, we just need to show that the complement to $\xi^{-1}(Z)_{\mathrm{free}}$ has codimension at least four for those strata $Z$ with $\mathrm{codim} \ \overline{\xi^{-1}(Z)} \le 3$. Since we are explicitly excluding the case $(p(\alpha),n) = (2,2)$, Corollary \ref{c:cbnormal2}, together with Lemma \ref{lem:stratadim} (3), imply that we are reduced to considering the cases $(p(\alpha),n) = (2,3)$ and $\nu = (1,2;1,1)$, or $(p(\alpha),n) = (3,2)$ and $\nu = (1,1;1,1)$.     

Observe first that if $Z$ is a stratum, then the polystable part of the preimage $\xi^{-1}(Z)$ has codimension (in $\mu^{-1}(\lambda)^{\theta}$) at least the codimension of $Z$ itself (in $\Nak\lambda{n\a}\theta$), since the fiber over a polystable representation $M$ has dimension $\alpha \cdot \alpha - \dim \End(M)$, which is maximized
when $M$ is stable. Thus if $Z$ has codimension at least four (which is the case for us), then we can
ignore the polystable part of $\xi^{-1}(Z)$.

Next, if we consider a stratum $Z$ of type $(1,a;1,b)$, note that every representation in this stratum is either polystable or an indecomposable extension of two non-isomorphic representations. The latter type is a brick, since there is a unique stable quotient and a unique stable subrepresentation and the two are nonisomorphic. Therefore applying the previous paragraph together
with  Lemma \ref{lem:stratadim} (2) shows that we can ignore $\xi^{-1}(Z)$ (the non-free locus has overall codimension at least four). This proves the final assertion.

  Since $\mu$ is regular on the locus where $\PG(n\alpha)$ acts freely,
  $\mu^{-1}(\lambda)_{\free}^{\theta}$ lies in the smooth locus of
  $V$.  We conclude from the last assertion of the proposition that
  the singular locus of $V$ has codimension at least $4$ (i.e.,
  property $R_3$ holds).  Since $V$ is a local complete intersection,
  and hence Cohen-Macaulay, it satisfies Serre's condition $S_2$, so
  it is normal.

  Finally, it follows from a theorem of Grothendieck, \cite[Theorem
  3.12]{KaledinLehnSorger}, that since $V$ is a complete intersection
  and satisfies $R_3$, the local ring $\mc{O}_{V,x}$ of any point $x \in V$ is a unique factorization domain. 
\end{proof}

The result that allows us to descend local factoriality from $V$ to the
quotient $U$ is the following theorem by Drezet. Since the version
given in \cite{Drezet} concerns the moduli space of semistable
sheaves on a smooth surface, we provide full details to ensure the
arguments are applicable in our situation. Let $G$ be a connected reductive group.  

\begin{lem}\label{lem:extendPG}
Let $V$ be a locally factorial normal affine $G$-variety and $V_s \subset V$ a dense open subset of $V$, whose complement has codimension at least two in $V$. Then every $G$-equivariant line bundle on $V_s$ extends to a $G$-equivariant line bundle on $V$.  
\end{lem}

\begin{proof}
The fact that $V$ is normal and locally factorial implies that
$$
\mathrm{Pic} (V) = \mathrm{Div}(V) = \mathrm{Div} \left(V_s \right) = \mathrm{Pic} \left( V_s \right). 
$$ 
Hence if $L_0$ is a $G$-equivariant line bundle on $V_s$, forgetting the equivariant structure, there is an extension $L$ to $V$. To show that the extension $L$ has a $G$-equivariant structure, one repeats the
argument of \cite[Lemme 5.2]{DrezetNarasimhan}, which uses the fact that the codimension of $V \smallsetminus V_s$ is at least two. 
\end{proof}

\begin{thm}[\cite{Drezet}, Theorem A]\label{thm:Drezet}
	Let $V$ be a locally factorial, normal $G$-variety, with good quotient $\xi: V \rightarrow U := V \git \, G$. Assume that there exists an open subset $U_s \subset U$ such that 
	\begin{enumerate}
		\item[(a)] the complement to $U_s$ has codimension at least two in $U$,
		\item[(b)] $V_s := \xi^{-1}(U_s) \rightarrow U_s$ is a principal $G$-bundle; and
		\item[(c)] the complement to $V_s$ has codimension at least two in $V$. 
	\end{enumerate} 
	Let $x \in U$ and $y \in T(x)$ a lift in $V$ (so that $G \cdot y$ is closed in $V$). The following are equivalent:
\begin{enumerate}
\item[(i)] The local ring $\mc{O}_{U,x}$ is a unique factorization domain.
\item[(ii)] For every line bundle $M_0$ on $U_s$, there exists an open subset
   $U_0 \subset U$ containing both $x$ and
  $U_s$ such that $M_0$ extends to a
  line bundle $M$ on $U_0$.
\item[(iii)] For every $G$-equivariant line bundle $L$ on $V$,
  the stabilizer of $y$ acts trivially on the fiber $L_y$.
\end{enumerate}
\end{thm} 

\begin{proof}
Recall that $\mc{O}_{U,x}$ is a unique factorization domain if and
only if every height one prime is principal. Geometrically, this means
that for every hypersurface $Y$ of $U$, the sheaf of ideals $\mc{I}_Y$
is free at $x$.

(i) implies (ii). It suffices to assume that $M_0 = \mc{I}_Y$, where
$Y$ is a hypersurface in $U_s$. If $\overline{Y}$ is the closure of $Y$ in $U$, then $M =
\mc{I}_{\overline{Y} \cap U_0}$ is the required extension.

(ii) implies (i). Let $Y$ be a hypersurface in $U$. We wish to show
that $\mc{I}_Y$ is free at $x$. Let $M$ be the extension of $\mc{I}_Y|_{U_s}$
to $U_0$. The line bundle $M$ corresponds to a Cartier divisor $D$ on
$U_0$; $M= \mc{O}_{U_0}(D)$. Then,
$$
\mc{I}_{Y} |_{U_s} = \mc{O}_{U_s}(D \cap U_s), 
$$ and the divisors $Y$ and $-D \cap
U_s$ are linearly equivalent. Since, by assumption, the
codimension of the complement to $U_s$ in $U$ has codimension at least two and $U$ is normal, $\overline{Y}
\simeq - D$. Hence $M = \mc{I}_{\overline{Y}}$ is free at $x$.

(ii) implies (iii). Suppose that $L$ is a $G$-equivariant line
bundle on $V$. Since $G$ acts freely on
$V_s$, the restriction $L
|_{V_s}$ descends to the line bundle
$M_0 = (L |_{V_s}) / G$ on
$U_s$. Let $M$ be the extension of $M_0$
to $U_0$. Then the $G$-equivariant line bundle $\xi^* M$ agrees
with $L$ on $V_s$. This implies, as in
the previous paragraph, that $\xi^* M = L$ on $\xi^{-1}(U_0)$. In
particular, since $y \in \xi^{-1}(U_0)$, the stabilizer of $y$ acts
trivially on $L_y$.

(iii) implies (ii). Let $M_0$ be a line bundle on
$U_s$. By Lemma \ref{lem:extendPG},
$\xi^* M_0$ extends to a $G$-equivariant line bundle $L$ on
$V$. Recall by definition of lift that $G \cdot y$ is closed in
$V$. Therefore Lemma \ref{lem:closedfiberaction} below says that there is an affine open
neighborhood $U'$ of $x$ such that $G_{y'}$ acts trivially on
$L_{y'}$ for all $y' \in \xi^{-1}(U')$ such that $G \cdot y'$ is
closed in $V$. Let $U_0 = U' \cup
U_s$. Then, by descent \cite[Theorem
  1.1]{Drezet}, there exists a line bundle $M$ on $U_0$ such that
$\xi^* M \simeq L$. In particular, $M$ extends $M_0$.
\end{proof}

Let $Y$ be a variety admitting an algebraic action of a reductive
group $G$. Assume that there exists a good quotient $\xi : Y
\rightarrow X = Y \git \, G$. The following result, which says that the
descent locus of an equivariant line bundle is open, is presumably
well-known, but we were unable to find it in the literature.

\begin{lem}\label{lem:closedfiberaction}
Let $L$ be a $G$-equivariant line bundle on $Y$ and $y \in Y$ a closed
point such that the orbit $\mc{O} = G \cdot y$ is closed and the
stabilizer $G_y$ of $y$ acts trivially on the fiber $L_y$. Then there
exists an affine open neighborhood $U$ of $\xi(y)$ such that the
stabilizer $G_{y'}$ acts trivially on $L_{y'}$ for all $y' \in
\xi^{-1}(U)$ such that $G \cdot y'$ is closed.
\end{lem}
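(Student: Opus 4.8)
The plan is to reduce the statement to the construction of a single $G$-invariant section of $L$, defined over a saturated affine neighbourhood of the orbit $\mc{O}=G\cdot y$ and nonvanishing at $y$, and then to use the elementary functoriality of good quotients to spread nonvanishing out to a whole saturated neighbourhood.

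First I would pass to a $G$-stable affine open. Since $\xi$ is an affine morphism, for any affine open $X_0\subseteq X$ with $\xi(y)\in X_0$ the preimage $Y_0:=\xi^{-1}(X_0)$ is an affine $G$-variety containing $\mc{O}$ as a closed orbit; replacing $Y$ by $Y_0$ we may assume $Y$ is affine. Writing $\mc{O}\cong G/G_y$, the $G$-equivariant line bundle $L|_{\mc{O}}$ is classified by the character of $G_y$ on $L_y$, which is trivial by hypothesis; hence $L|_{\mc{O}}$ is the trivial equivariant line bundle, so $H^0(\mc{O},L|_{\mc{O}})\cong \C[\mc{O}]$ as a $G$-module and $H^0(\mc{O},L|_{\mc{O}})^G$ is one-dimensional, spanned by a nowhere-vanishing section (the constant ``$1$'').

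Next I would produce the invariant global section. Because $Y$ is affine and $L$ is coherent, the restriction map $H^0(Y,L)\to H^0(\mc{O},L|_{\mc{O}})$ is surjective, and it is $G$-equivariant; as $G$ is reductive the functor of $G$-invariants is exact, so $H^0(Y,L)^G\to H^0(\mc{O},L|_{\mc{O}})^G$ is surjective, and we may pick $s\in H^0(Y,L)^G$ restricting to a nowhere-vanishing section of $L|_{\mc{O}}$; in particular $s(y)\neq 0$. Let $Z\subseteq Y$ be the vanishing locus of $s$, a closed $G$-stable subset. The key point is that $Z$ is disjoint from the fibre $\xi^{-1}(\xi(y))$: the orbit $\mc{O}$ is the unique closed orbit in that fibre and therefore lies in the closure of every orbit of the fibre, so if $Z$ met the fibre it would, being closed and $G$-stable, contain $\mc{O}$, contradicting $s(y)\neq 0$. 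Since a good quotient sends $G$-stable closed subsets to closed subsets, $\xi(Z)$ is closed in $X_0$ and does not contain $\xi(y)$, so $U:=X_0\setminus\xi(Z)$ is an affine open neighbourhood of $\xi(y)$. For $y'\in\xi^{-1}(U)$ we have $y'\notin Z$, so $s(y')$ is a nonzero vector of the line $L_{y'}$; since $s$ is $G$-invariant, $s(y')$ is fixed by $G_{y'}$, and as $\dim L_{y'}=1$ this forces $G_{y'}$ to act trivially on $L_{y'}$. This holds for every $y'\in\xi^{-1}(U)$, in particular for those with $G\cdot y'$ closed, which is the assertion.

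The only genuine subtlety — the step I would be most careful about — is the saturation argument: one needs that the $\xi$-image of the zero locus of the invariant section is closed and avoids $\xi(y)$, which rests on the facts that each fibre of a good quotient contains a unique closed orbit contained in the closure of all the others, and that $G$-stable closed subsets descend to closed subsets. (One could alternatively apply Luna's \'etale slice theorem at $y$, reducing to the case where $y$ is a fixed point of the reductive stabilizer $G_y$ acting on an affine slice, but the direct argument above avoids having to compare stabilizer characters across the \'etale map.)
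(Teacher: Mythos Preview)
Your argument is correct and follows essentially the same route as the paper's proof (which in turn follows Drezet--Narasimhan): trivialize $L|_{\mc{O}}$ using the triviality of the $G_y$-character, lift to a $G$-invariant section $s$ over an affine saturated neighbourhood, and then separate the zero locus of $s$ from $\mc{O}$ using the separation properties of good quotients. One small slip: $U:=X_0\setminus\xi(Z)$ need not be affine in general (the complement of a closed set in an affine variety is only open), so you should instead choose a principal affine open $D(f)\subset X_0$ with $f(\xi(y))\neq 0$ and $f|_{\xi(Z)}=0$; this does not affect the rest of the argument.
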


\begin{proof}
The proof of the lemma can be easily deduced from the proof of
\cite[Theorem 2.3]{DrezetNarasimhan}. It is shown there
that one can find a $G$-invariant section $s' : \mc{O} \rightarrow L
|_{\mc{O}}$, which trivializes $L |_{\mc{O}}$. As explained in
\emph{loc.~cit.}, the fact that $\mc{O}$ is closed in $Y$ implies
that one can lift $s'$ to a $G$-invariant section $s \in
\Gamma(\xi^{-1}(U'), L)$, where $U'$ is some affine open neighborhood
of $\xi(y)$. Let $W$ be the (non-empty) open subset of $\xi^{-1}(U')$
consisting of all points $y'$ such that $s(y') \neq 0$ i.e. $s$
trivializes $L$ over $W$. Then it suffices to show that there is some
affine neighborhood $U$ of $\xi(y)$ such that $\xi^{-1}(U) \subset
W$. Again, following \cite[Theorem 2.3]{DrezetNarasimhan},
the sets $\xi^{-1}(U')
\smallsetminus W$ and $\mc{O}$ are $G$-stable closed subsets of
$\xi^{-1}(U')$. Therefore the fact that $\xi$ is a good quotient
implies that $\xi(\xi^{-1}(U') \smallsetminus W)$ and $\xi(\mc{O}) =\{
\xi(y) \}$ are closed, disjoint subsets of $U'$. Thus, there exists an
affine neighborhood $U$ of $\xi(y)$ such that $U \cap \xi(\xi^{-1}(U')
\smallsetminus W) = \emptyset$, as required.
\end{proof}  

\begin{cor}\label{cor:factorial}
Assume that $(p(\a),n) \neq (2,2)$. Then the variety $U$ is locally factorial. 
\end{cor}

\begin{proof}
	Let $G = \PG(\a)$ and $U_s = \Nak\lambda{n\a}{\theta}_s$. Proposition \ref{prop:Vproperties} implies that $V$ is normal and locally factorial. This implies that $U$ is normal. Moreover, by Lemma \ref{lem:stratadim}, the codimension of the complement to $U_s$ in $U$ has codimension at least two. Thus, assumptions (a) and (b) of Theorem \ref{thm:Drezet} are satisfied. By Lemma \ref{lem:stratadim} (2) and Corollary \ref{c:cbnormal2}, the complement to $\mu^{-1}(\lambda)_{s}^{\theta}$ in $V$ has codimension at least $4$. In particular, assumption (c) of Theorem \ref{thm:Drezet} is also satisfied.  
	
	Next, recall from the proof of Lemma \ref{lem:Uopen}, the stratum of
type $\rho = (n,\a)$ is contained in the closure of all other strata
in $U$. If $y$ is a lift in $\mu^{-1}(\lambda)^{\theta}$ of a point of
$\Nak\lambda{n\a}{\theta}_{\rho}$ then $y$ corresponds to a
representation $M_0^{\oplus n}$, where $M_0 \in
\Nak\lambda{\a}{\theta}$ is a stable
$\Pi^{\lambda}(Q)$-module. Therefore $\PG(n\a)_y = PGL_n$ has no
non-trivial characters. In particular, $\PG(n\a)_y$ will act trivially
on $L_y$ for any $\PG(n\a)$-equivariant line bundle on $V$. Hence, we
deduce from Theorem \ref{thm:Drezet} that $\Nak\lambda{n\a}{\theta}$
is factorial at every point of $\Nak\lambda{n\a}{\theta}_{\rho}$.

Now consider an arbitrary stratum $\Nak\lambda{n\a}{\theta}_{\nu \alpha}$ in
$U$. If $\Nak\lambda{n\a}{\theta}$ is factorial at one point of the
stratum then it will be factorial at every point in the stratum (for a
rigorous proof of this fact, repeat the argument given in the proof of
\cite[Theorem 5.3]{KaledinLehnSorger}).  On the other hand, a
theorem of Boissi\`ere, Gabber and Serman \cite{OpenFactorial} says
that the subset of factorial points of $U$ is an open subset. Since
this open subset is a union of strata and contains the unique closed
stratum, it must be the whole of $U$.
\end{proof}

\begin{rem}
Notice that if $\theta$ is generic then $U =
\Nak\lambda{n\a}{\theta}$. Hence Corollary \ref{cor:factorial}
says that $\Nak\lambda{n\a}{\theta}$ is a locally factorial variety. This is precisely the statement of Theorem \ref{t:fact}. In the special case where $Q$ has a single vertex, $g$ loops and $\a = e_1$, the parameters $\lambda = \theta = 0$ are generic. Then $\Nak0{n}{0}$ is an affine cone and thus has trivial Picard group. Corollary \ref{cor:factorial} implies that when $(g,n) \neq (2,2)$, $\Nak0{n}{0}$ is actually factorial. That is, $\C[\Nak0{n}{0}]$ is a unique factorization domain.  
\end{rem}




\subsection{The proof of Theorem \ref{thm:sympsing}}\label{sec:thmsympsing}

By Proposition \ref{prop:Mirrnormal}, we know that
$\Nak\lambda\a\theta$ is irreducible and normal. Therefore, it
suffices to show that it admits symplectic singularities. Since the
isomorphism of Theorem \ref{thm:decompCB2} is Poisson, it suffices to
show that the varieties $S^{n_i} \Nak\lambda{\sigma^{(i)}}\theta$
admit symplectic singularities. If $\sigma^{(i)}$ is real there is
nothing to check.

\begin{lem}\label{lem:pre}
Let $X$ be a smooth irreducible Poisson variety and $Y$ a smooth symplectic variety. If $\pi : Y \rightarrow X$ is a birational, surjective Poisson morphism, then it is an isomorphism. 
\end{lem}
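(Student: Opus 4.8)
The claim is purely local, so I would pass to an affine situation and argue via the coordinate rings. Since $\pi : Y \to X$ is birational and surjective with $X$ smooth (hence normal), Zariski's main theorem says that $\pi$ is an isomorphism as soon as it is quasi-finite; equivalently, it suffices to show that $\pi$ does not contract any positive-dimensional subvariety. So the real content is: a birational, surjective Poisson morphism from a symplectic manifold $Y$ onto a smooth Poisson variety $X$ cannot contract anything.

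Here is how I would extract a contradiction from the existence of an exceptional locus. Suppose $E \subset Y$ is an irreducible component of the exceptional locus, with $\pi(E) = Z$ of strictly smaller dimension. Let $U \subseteq X$ be the open locus over which $\pi$ is an isomorphism; then $E$ is disjoint from $\pi^{-1}(U)$, and $Z = \pi(E) \subseteq X \setminus U$, a proper closed subset. Now use the Poisson structures. On the symplectic manifold $Y$, the Poisson bivector is nondegenerate, so the symplectic form $\omega_Y$ is a genuine $2$-form with $\omega_Y^{\dim Y/2}$ nowhere vanishing. Since $\pi$ is a Poisson morphism and birational, $\omega_Y$ agrees on $\pi^{-1}(U)$ with the pullback of the Poisson structure on $U \subseteq X$; in particular the Poisson structure on $X$ is generically symplectic (nondegenerate on $U$), and by birationality the top power of its bivector, a section of $\bigwedge^{\dim X} \mathcal{T}_X$, is nonzero on $U$, hence (as $X$ is smooth and irreducible) nonzero generically, i.e.\ $X$ is generically symplectic too. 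Restricting to a suitable smooth affine open of $X$ meeting $Z$ and shrinking $Y$ accordingly, I may assume $X$ and $Y$ are affine, $Y$ symplectic, and $\pi$ an isomorphism outside codimension $\geq 1$.

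The key point is then a dimension/degeneracy argument on the exceptional fibers. Fix a general point $z \in Z$ and let $F = \pi^{-1}(z) \cap E$, which is positive-dimensional. On $Y$, the functions $\pi^*(\mathfrak{m}_z)$ (pullbacks of functions on $X$ vanishing at $z$) all vanish on $F$; since $\pi$ is a Poisson map, the Hamiltonian vector fields of these functions on $Y$ span, at a general point $y$ of $F$, exactly the image of $\pi^*$ applied to the Hamiltonian vector fields of $\mathfrak{m}_z$ on $X$ — but those are tangent to the symplectic leaf of $X$ through $z$ and, by nondegeneracy of $\omega_Y$, the Hamiltonian vector fields of $\pi^* \mathfrak{m}_z$ at $y$ span a subspace of $T_yY$ of dimension equal to the rank of $d\pi_y$ on the relevant directions; since $d\pi_y$ kills $T_yF \neq 0$, the functions $\pi^*\mathfrak{m}_z$ have differentials at $y$ spanning a proper subspace $W \subsetneq T_y^*Y$ of codimension $\geq \dim F$, yet their Hamiltonian vector fields, being $\omega_Y^{-1}(W)$, span a subspace of $T_yY$ of dimension $\dim Y - \operatorname{codim} W \le \dim Y - \dim F$. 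Comparing with the fact that these Hamiltonian flows must stay inside $\pi^{-1}(\text{leaf through }z)$ and that the generic leaf of $X$ has dimension $\dim X = \dim Y$ forces $\dim F = 0$, a contradiction. Cleanly: I would run this as ``$\pi$ is a Poisson morphism between varieties that are generically symplectic of the same dimension, hence étale on the symplectic locus; being birational it is an open immersion there; being surjective onto the smooth $X$ with exceptional locus of codimension $\ge 1$, Zariski's main theorem (normality of $X$) upgrades this to an isomorphism provided we show quasi-finiteness, which the symplectic (volume-form) non-vanishing on $Y$ versus the vanishing of the pulled-back top form along a contracted fiber rules out.''

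The main obstacle I anticipate is making the ``volume form'' comparison rigorous across the exceptional locus: one must argue that $\pi^*$ of the top power of the Poisson bivector on $X$, which is a regular nowhere-zero section of $\bigwedge^{\dim Y}\mathcal{T}_Y$ on $\pi^{-1}(U)$, cannot extend to a nowhere-vanishing section on all of $Y$ if $\pi$ contracts a divisor or higher-codimension locus — the cleanest route is to note that $\pi$ birational plus $X$ smooth means $\pi_* \mathcal{O}_Y = \mathcal{O}_X$ (as $Y$ is normal), so a rational function on $Y$ pulled back from $X$ detects the contraction, and to compare $K_Y$ with $\pi^*K_X$ using that both the symplectic form on $Y$ and the generic symplectic form on $X$ trivialize the respective canonical bundles, giving $K_Y = \pi^* K_X$ (no discrepancy), which is incompatible with a nontrivial contraction from a smooth $Y$ onto a smooth $X$ unless $\pi$ is an isomorphism. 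I would present it in whichever of these two equivalent forms is shortest to write.
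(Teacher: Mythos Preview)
Your proposal circles the right ideas but never closes the main gap, and the paper's argument is much shorter.

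The crux you are missing is the upgrade from ``$X$ is \emph{generically} symplectic'' to ``$X$ is symplectic \emph{everywhere}.'' You observe correctly that the top power $\Pi_X^n$ of the Poisson bivector is a regular section of the line bundle $\bigwedge^{2n}\mathcal{T}_X$ which is nonzero on the isomorphism locus $U$. What you do not use is that, on a smooth variety, the zero locus of a nonzero section of a line bundle is either empty or of pure codimension one. The paper combines this with the fact that $X\setminus U$ has codimension $\ge 2$ (this is the content of the citation to Hartshorne I, Corollary~6.12: for a birational morphism to a smooth target, the inverse rational map is defined off a codimension-two locus). Hence the degeneracy locus of $\Pi_X$, being contained in $X\setminus U$, must be empty, so $X$ is symplectic. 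Once $X$ is symplectic, the Poisson condition gives $\omega_Y=\pi^*\omega_X$; nondegeneracy of $\omega_Y$ forces $\ker d_y\pi=0$ for every $y$, so $\pi$ is \'etale; then \'etale $+$ birational $+$ Zariski's Main Theorem gives an open immersion, and surjectivity finishes.

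Your alternative routes do not close the gap. The Hamiltonian-vector-field argument only reproves that $d\pi_y$ fails to be an isomorphism exactly when $\Pi_{X,\pi(y)}$ is degenerate; it does not exclude a nonempty degeneracy divisor in $X$. The discrepancy/$K_Y=\pi^*K_X$ argument presupposes both that $K_X$ is trivial (i.e.\ that $X$ is already symplectic, which is what is at stake) and that $\pi$ is proper (so that exceptional divisors exist and carry positive discrepancy); properness is not among the hypotheses. Likewise your ``pulled-back top form vanishes along a contracted fiber'' is not a contradiction: as rational sections one has $\pi^*\omega_X^n=\omega_Y^n$, which is nowhere zero, so nothing vanishes.

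In short: insert the two-line observation that the degeneracy locus of $\Pi_X$ is empty (divisorial-or-empty, yet contained in a codimension-$\ge 2$ set), and then your \'etale/ZMT endgame becomes a complete proof---exactly the paper's argument.
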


\begin{proof}
  Since the morphism is birational, there is a dense open subset $U \subset X$ over which it is an isomorphism. We claim that we can take $U$ to have complement of codimension at least two.  Indeed, if $Z \subseteq X$ is any irreducible subset of codimension one, then localising at $Z$, by \cite[I, Corollary 6.12]{Hartshorne}, the birational map is an isomorphism. So $U$ can be taken to have nontrivial intersection with $Z$.
  Now,
  since $X$ is smooth, the locus where the Poisson structure on $X$ is degenerate has codimension one. Therefore, $X$ is symplectic too. This implies that $d_y \pi$ is an isomorphism for all $y \in Y$. Thus, by Zariski's Main Theorem, $\pi$ is an isomorphism. 
\end{proof}

\begin{lem}\label{lem:YtoXsymp}
Let $X$ be a normal irreducible Poisson variety and assume that $\pi : Y \rightarrow X$ is a proper birational Poisson morphism from a variety $Y$ with symplectic singularities. Then $X$ has symplectic singularities. 
\end{lem}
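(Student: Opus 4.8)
Lemma \ref{lem:YtoXsymp}: if $X$ is a normal Poisson variety and $\pi\colon Y\to X$ is a projective birational Poisson morphism from a variety $Y$ with symplectic singularities, then $X$ has symplectic singularities.

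**Plan.** The plan is to verify the definition of symplectic singularities for $X$ directly, transporting the symplectic $2$-form from the smooth locus of $Y$ to the smooth locus of $X$ and checking the extension property against an arbitrary resolution. First I would record what "$Y$ has symplectic singularities" gives us: $Y$ is normal, there is an algebraic symplectic $2$-form $\omega_Y$ on $Y_{\mathrm{sm}}$, and for some (equivalently any) resolution $\rho\colon Z\to Y$, the form $\rho^*\omega_Y$ extends regularly over $Z$. Since $\pi$ is birational, there is a dense open $V\subseteq X$, contained in $X_{\mathrm{sm}}$ and with $\pi^{-1}(V)\subseteq Y_{\mathrm{sm}}$, over which $\pi$ restricts to an isomorphism; transporting $\omega_Y$ through this isomorphism gives a symplectic form $\omega_V$ on $V$. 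Because the Poisson structures on $X$ and $Y$ are generically nondegenerate and $\pi$ is a Poisson morphism that is generically an isomorphism, $\omega_V$ is simply the nondegenerate $2$-form dual to the Poisson bivector on the open symplectic leaf of $X$; in particular $\omega_V$ is the restriction of the canonical Poisson $2$-form on all of $X_{\mathrm{sm}}$, so it is defined and nondegenerate on all of $X_{\mathrm{sm}}$. Call this form $\omega_X$.

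**The extension step.** Now let $\pi_X\colon \widetilde X\to X$ be any resolution of singularities; I must show $\pi_X^*\omega_X$ extends to a regular $2$-form on $\widetilde X$. The key move is to pass to a common resolution: choose a resolution $W$ of the (closure of the) graph of the rational map $\widetilde X\dashrightarrow Y$, so that $W$ dominates both $\widetilde X$, via $q\colon W\to\widetilde X$, and $Y$, via a map $W\to Y$ which we may further factor (after blowing up) through a resolution $Z\to Y$, giving $r\colon W\to Z$. By the hypothesis on $Y$, $\rho^*\omega_Y$ extends to a regular form $\eta$ on $Z$, hence $r^*\eta$ is a regular $2$-form on $W$ agreeing with the pullback of $\omega_X$ on the dense open where everything is an isomorphism. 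Thus $r^*\eta = q^*(\pi_X^*\omega_X)$ on a dense open of $W$, and since $W$ is smooth and both sides are regular, they agree on all of $W$. Finally, since $q\colon W\to\widetilde X$ is a proper birational morphism of smooth varieties, a regular $2$-form on $W$ that is a pullback of a rational form from $\widetilde X$ descends to a regular form on $\widetilde X$ (this is the standard fact that $q_*\Omega^i_W = \Omega^i_{\widetilde X}$ for $q$ a resolution, applied here to push forward $r^*\eta = q^*(\pi_X^*\omega_X)$); this descended form extends $\pi_X^*\omega_X$ over $\widetilde X$, as required. Together with normality of $X$ (given) and the existence of the symplectic form $\omega_X$ on $X_{\mathrm{sm}}$, this shows $X$ has symplectic singularities.

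**Main obstacle.** The routine-looking but genuinely load-bearing point is the compatibility in the middle: that the form obtained from $\omega_Y$ on the open set where $\pi$ is an isomorphism really is the restriction of the globally-defined Poisson $2$-form $\omega_X$ on $X_{\mathrm{sm}}$, rather than merely a form on a smaller open set. This is where one uses that $\pi$ is a \emph{Poisson} morphism and that the Poisson structures are generically symplectic (so the bivector is generically invertible and its inverse $2$-form extends over the whole smooth locus as the leafwise symplectic form); without the Poisson hypothesis one would only get a form on $V$, not on $X_{\mathrm{sm}}$. The diagram-chase with the common resolution $W$ is then formal, the only subtlety being to make sure $W$ can be chosen smooth and dominating a resolution of $Y$ (rather than $Y$ itself), which is arranged by Hironaka resolution applied to the graph closure.
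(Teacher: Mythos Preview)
Your overall strategy is sound, and the common-resolution argument in the extension step is correct. However, the justification you give for what you rightly flag as the main obstacle---that $\omega_V$ extends to a nondegenerate form $\omega_X$ on all of $X_{\mathrm{sm}}$---is not an argument. Saying ``the bivector is generically invertible and its inverse $2$-form extends over the whole smooth locus as the leafwise symplectic form'' is an assertion: a generically nondegenerate Poisson bivector need not be everywhere nondegenerate on a smooth variety (e.g.\ $x\,\partial_x\wedge\partial_y$ on $\mathbb{C}^2$), so its inverse need not extend. The missing step, which in the paper is isolated as Lemma~\ref{lem:pre}, is a codimension count: since $X_{\mathrm{sm}}$ is smooth (hence normal) and $\pi$ is proper birational, the complement of the isomorphism locus $V$ in $X_{\mathrm{sm}}$ has codimension at least two, whereas the degeneracy locus of the Poisson bivector on $X_{\mathrm{sm}}$, if nonempty, is a divisor (cut out by the vanishing of the top wedge power of the bivector). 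Hence the degeneracy locus is empty and $X_{\mathrm{sm}}$ is genuinely symplectic.

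The paper also takes a shorter route in the extension step. Rather than passing to an arbitrary resolution $\widetilde X\to X$ and resolving the graph of the rational map $\widetilde X\dashrightarrow Y$, it simply observes that the composite $\pi\circ\rho\colon Z\to Y\to X$ is already a projective resolution of $X$, and $(\pi\circ\rho)^*\omega_X=\rho^*\omega_Y$ extends regularly to $Z$ by the hypothesis on $Y$. This uses implicitly that it suffices to check the extension property on a single resolution; your route with the auxiliary $W$ avoids invoking that fact but is otherwise unnecessary extra machinery. Both approaches are valid once the gap above is closed.
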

 Observe that one can drop the assumption of the lemma that $X$ is Poisson and $\pi$ is a Poisson morphism: since $R^0 \pi_* \mc{O}_Y = \mc{O}_X$, $Y$ endows $X$ with a unique Poisson structure making $\pi$ Poisson.
\begin{proof}
  Let $\rho : Z \rightarrow Y$ be a resolution of
  singularities. If $\varpi$ is the symplectic $2$-form on the smooth
  locus of $Y$ then $\rho^* \varpi$ extends to a regular form on
  $Z$. Let $U = \pi^{-1}(X_{\mathrm{sm}})$. Then, since
  $\pi : U \rightarrow X_{\mathrm{sm}}$ is proper and birational and $X_{\mathrm{sm}}$ is irreducible, $\pi$ 
  is surjective. Now, let $U' := U \cap Y_{\mathrm{sm}}$.  
  Lemma \ref{lem:pre} implies that $\pi|_{U'}: U' \to \pi(U')$ is an
  isomorphism. Thus $\pi(U') \subseteq X_{\mathrm{sm}}$ has a symplectic $2$-form induced by the Poisson structure.  Since $\pi(U') \subseteq X_{\mathrm{sm}}$ has complement of codimension two, the Poisson structure cannot degenerate here, and hence $X_{\mathrm{sm}}$ is itself symplectic.  Now suppose that $y \in U$ is singular.  There is a Hamiltonian vector field at $\pi(y)$ which flows into $\pi(U')$. The Hamiltonian vector field of the pullback function must then flow from $y$ into $U'$.  This contradicts the fact that the singular locus of $U$ is a union of symplectic leaves, hence closed under the Hamiltonian flow.  So $U=U'$, which is symplectic. By Lemma \ref{lem:pre}, $\pi: U \to X_{\mathrm{sm}}$ is an isomorphism.  
  Let $\omega$ be the symplectic two-form on $X_{\mathrm{sm}}$. Then
  $\pi^* \omega = \varpi|_U$. Thus,
  $(\pi \circ \rho)^* \omega = \rho^* \varpi$ extends to a regular
  form, and hence $X$ has symplectic singularities.
  \end{proof}

If $\sigma^{(i)}$ is an indivisible anisotropic root then
$n_i = 1$ by Corollary \ref{c:ni=1}. Choosing a generic stability parameter $\theta' \ge \theta$ defines a projective, Poisson resolution
$\Nak\lambda{\sigma^{(i)}}{\theta'} \rightarrow
\Nak\lambda{\sigma^{(i)}}\theta$
with $\Nak\lambda{\sigma^{(i)}}{\theta'}$ a smooth symplectic variety; see
\cite[Section 8]{CBnormal}. Similarly, if $\sigma^{(i)}$ is isotropic
imaginary then it is well-know that one can frame the quiver so that
there exists a projective, Poisson resolution of singularities from a
quiver variety that is a smooth symplectic variety. Thus, Lemma
\ref{lem:YtoXsymp} implies that
$S^{n_i} \Nak\lambda{\sigma^{(i)}}\theta$ admits symplectic
singularities in these two cases.

Therefore we may assume that there exists an
indivisible anisotropic root $\beta$ such that $\alpha = n \beta$, for some $n > 1$. Let $g = p(\beta)$. If $(g,n) = (2,2)$, then Theorem \ref{thm:blowup22Q} and Lemma \ref{lem:YtoXsymp}
imply that $\Nak\lambda{\sigma^{(i)}}\theta$ has symplectic
singularities. Therefore, it suffices to show that
$\Nak\lambda{\sigma^{(i)}}\theta$ has symplectic singularities when
$(g,n) \neq (2,2)$. Again, choose a generic stability parameter
$\theta' \ge \theta$. Then
$\pi :\Nak\lambda{\sigma^{(i)}}{\theta'} \rightarrow
\Nak\lambda{\sigma^{(i)}}\theta$
is projective and Poisson by Lemma \ref{lem:obvious}. Moreover, since
both $\Nak\lambda{\sigma^{(i)}}{\theta'}$ and
$\Nak\lambda{\sigma^{(i)}}\theta$ are irreducible by Proposition
\ref{prop:Mirrnormal}, and a generic element of
$\Nak\lambda{\sigma^{(i)}}\theta$ is $\theta$-stable, the map $\pi$ is
birational (see the proof of Theorem \ref{thm:nonisonores} below for more details). Thus, by Lemma \ref{lem:YtoXsymp}, it suffices to show
that $\Nak\lambda{\sigma^{(i)}}{\theta'}$ admits symplectic
singularities. This follows from Flenner's Theorem \cite{Flenner},
once we show that the singular locus of
$\Nak\lambda{\sigma^{(i)}}{\theta'}$ has codimension at least four. By
Theorem \ref{thm:stablesmooth}, the singular locus of
$\Nak\lambda{\sigma^{(i)}}{\theta'}$ is the union of all strata except
the open stratum. By Lemma \ref{lem:stratadim} (2) each of these strata
has codimension at least $4$ in $\Nak\lambda{\sigma^{(i)}}{\theta'}$.

\subsection{The proof of Theorems \ref{thm:main0}, \ref{thm:main1}, and \ref{thm:proper}}\label{sec:Thm01proof}

We begin by considering the case of a divisible anisotropic root. Recall that $\alpha$ is $\Sigma$-divisible if $\alpha = m\beta$ for some $\beta \in \Sigma_{\lambda,\theta}$ and $m \geq 2$.

Recall that a normal variety $X$ with $\Q$-Cartier canonical divisor $K_X$ is said to have \textit{terminal singularities} if $K_Y = f^*(K_X) + \sum_i a_i E_i$ with $a_i > 0$, where $f : Y \rightarrow X$ is any resolution of singularities, and the sum is over all exceptional divisors of $f$.


\begin{thm}\label{thm:nonisonores}
  Let $\alpha \in \Sigma_{\lambda,\theta}$ be a
  divisible
  anisotropic root which is not of the form $\alpha=2\beta$ for $p(\beta)=2$.
  Then the symplectic singularity
  $\Nak\lambda{\a}\theta$ does not admit a projective symplectic
  resolution. If, moreover, $\alpha$ is $\Sigma$-divisible, then it does not admit a proper symplectic resolution.
\end{thm}

\begin{proof}
  Write $\alpha=n\beta$ for $n \geq 2$.  First suppose that
  $\beta \in \Sigma_{\lambda,\theta}$.  If
  $\Nak\lambda{n \beta}\theta$ admits a proper symplectic resolution
  then so too by restriction does the open subset $U$ of Lemma
  \ref{lem:Uopen}. Recall from Theorem \ref{thm:stablesmooth} that the
  singular locus of $U$ is the complement of the open stratum. The
  singular locus is also nonempty since the stratum $(n,\beta)$ is
  nonempty.  By Lemma \ref{lem:stratadim}, it has codimension at
  least four in $U$. Therefore, since $U$ has symplectic singularities
  by Theorem \ref{thm:sympsing}, \cite{NamikawaNote} says that $U$ has
  terminal singularities. This implies that if $f : Y \rightarrow U$
  is a proper symplectic resolution then the exceptional locus of $f$
  has codimension at least two in $Y$. On the other hand, we have
  shown in Corollary \ref{cor:factorial} that $U$ is locally factorial. This
  implies by van der Waerden purity, see \cite[Section 1.40]{Debarre},
  that the exceptional locus of $f$ is a divisor. This is a
  contradiction.

  Finally suppose that $\beta \notin \Sigma_{\lambda,\theta}$. Since it is indivisible, it is clear that for generic $\theta'$, we have $\beta \in \Sigma_{\lambda,\theta'}$. It follows from the previous paragraph that $\Nak\lambda\a{\theta'}$ does not admit a proper symplectic resolution, hence not a projective symplectic resolution.

  We claim that, for generic $\theta' > \theta$, $\Nak\lambda\a{\theta'} \to \Nak\lambda\a{\theta}$ is a projective birational Poisson morphism. For generic $\theta'$, note that every $\theta$-stable representation is also $\theta'$-stable: this is because we can assume that for $\beta < \a$ such that $\theta \cdot \beta < 0$, we also have $\theta' \cdot \beta < 0$. Now, since every $\theta'$-semistable representation is also $\theta$-semistable, and the $\theta$-stable representations are dense in the $\theta$-semistable ones (by Corollary \ref{c:dense}), it follows that the $\theta$-stable locus is dense in $\Nak\lambda\a{\theta'}$.
  Since stable orbits in $\mu^{-1}(\lambda)$ are closed, in this case the locus of
  $\theta$-stable representations in $\Nak\lambda\a{\theta'}$ maps isomorphically
  onto the stable locus in  $\Nak\lambda\a{\theta}$. The latter being dense as well, and stability being an open condition, we conclude that the map $\Nak\lambda\a{\theta'} \to \Nak\lambda\a{\theta}$ is birational. The other 
  conditions follow from Lemma \ref{lem:obvious}.

  Now, suppose that $\Nak\lambda\a\theta$ admitted a projective symplectic resolution. If $\lambda=\theta=0$, then $\Nak\lambda\a\theta$ is conical. By \cite[Theorem 2.2]{BSnon} (which is based on \cite{Namikawa}), it would then follow that $\Nak\lambda\a{\theta'}$ also admitted a projective symplectic resolution, which is a contradiction.

  For the general case, if $\Nak\lambda\a\theta$ admitted a projective symplectic resolution, it would also do so 
  \'etale-locally. By first decomposing $\beta$ into elements of $\Sigma_{\lambda,\theta}$, we could find a representation type $\tau = (ne_1, \beta^{(1)}; \ds ; ne_k, \beta^{(k)})$ for $\alpha$ with coefficients multiples of $n$.  Then, the \'etale-local quiver $(Q',\a')$ at this stratum would have $n \mid \gcd(\a')$.  Moreover, the anisotropic root $\a'$ belongs to $\Sigma_{0,0}$, as the generic representation remains stable (one can also prove this directly). It would follow that $\Nak{0}{\a'}{0}$ admits a projective resolution, contradicting the previous paragraph.
\end{proof} 
The proof also shows, more generally, that any singular open subset of
$U$ in Lemma \ref{lem:Uopen} (in the $\Sigma$-divisible case)
does not admit a symplectic resolution.  If $\theta$ is generic,
then $U = \Nak\lambda{n\a} \theta$.  This implies Corollary
\ref{cor:openfact} (as well as the stronger result discussed
afterwards).

\begin{cor}\label{c:proper=proj}  Let $\alpha\in \Sigma_{\lambda,\theta}$ be
  indivisible and $n \geq 1$. Then $\Nak\lambda{n \a}\theta$
  admits a projective symplectic resolution if one of the following
  conditions hold: 
  \begin{enumerate}
  	\item[(o)] $\a$ is a real root ($p(\alpha)=0$); 
  	\item[(i)] $n=1$;
  	\item[(ii)] $p(\alpha)=1$; or
  	\item[(iii)] $(n,p(\alpha)) = (2,2)$.
  \end{enumerate}
   If none of these conditions hold, then $\Nak\lambda{n \a}\theta$ does not admit a proper symplectic resolution. In particular, existence of
  projective and proper symplectic resolutions is equivalent for
  $\Nak\lambda{n \a}\theta$.
\end{cor}

\begin{proof} In case (o), $\Nak \lambda{n\a}\theta$ is a point, so
  there is nothing to show. In case (i), let $D$ be the open subset of $\{ \theta' \in \Q^{Q_0} \ | \ \theta'(\alpha) = 0\}$ consisting of all stability conditions vanishing on $\alpha$, but not on any other $\beta < \alpha$ with $\beta \in \Sigma_{\lambda,\theta}$. Since $\alpha$ is
  indivisible, the set $D$ is non-empty. Its closure is the whole space, thus there exists a connected component $C$ such that $\theta \in \overline{C}$. Choose $\theta' \in C$ (rescaling if necessary, we may assume that $\theta' \in \Z^{Q_0}$). Then, just as shown in \cite[Section 8]{CBnormal}, the morphism   $\Nak \lambda{ \alpha}{\theta'} \to \Nak \lambda{ \alpha}\theta$ is
  a projective symplectic resolution. For case (ii), first note that case (i) implies that $X:=\Nak\lambda{\a}{\theta'} \to \Nak\lambda{\a}{\theta}$ is a projective symplectic resolution of (du Val) singularities for some $\theta' \ge \theta$. In particular, $X$ is a smooth symplectic surface. Next, $\Nak \lambda{n\alpha}\theta \cong S^n \Nak \lambda{\alpha} \theta$ by Theorem \ref{thm:CBdecomp} because the canonical decomposition of $n\alpha$ is $\alpha+ \cdots + \alpha$. We therefore obtain a partial resolution $S^n X \to \Nak\lambda{n\alpha}\theta$. Now recall that the natural map
$\Hilb^n  \Nak \lambda \a {\theta'} \to S^n \Nak \lambda \a{\theta'}$ is a projective symplectic resolution; see \cite[Theorem 1.8, Theorem 1.10]{NakajimaBook}.  
Finally in case (iii) the resolution is given in Theorem \ref{thm:blowup22Q}.   If none of the conditions hold, then $\alpha$ is an anisotropic root and the non-existence of a proper symplectic resolution is a consequence of Theorem \ref{thm:nonisonores}.
\end{proof}

We now proceed to the proof of Theorem \ref{thm:main0}.
The isomorphism of Theorem \ref{thm:main0} follows directly from
Theorem \ref{thm:decompCB2}. Therefore, it suffices to show that
$\Nak\lambda\a\theta$ admits a projective symplectic resolution if and only if
each $\Nak\lambda{\sigma^{(i)}}\theta$ admits a projective symplectic resolution.

First note that if $\sigma^{(i)}$ is a real root or an isotropic
root, then $\sigma^{(i)}$ is 
indivisible (for the latter
property, a 
divisible isotropic root is not in
$\Sigma_{\lambda,\theta}$).  In these cases, as recalled in Corollary
\ref{c:proper=proj}, $S^{n_i}\Nak\lambda{\sigma^{(i)}}\theta$ admits a
projective symplectic resolution, as does
$\Nak\lambda{\sigma^{(i)}}\theta$.

On the other hand, if $\sigma^{(i)}$ is an anisotropic root, then $n_i = 1$, by
Corollary \ref{c:ni=1}.
Moreover, by Corollary \ref{c:proper=proj}, $\Nak\lambda{\sigma^{(i)}}\theta$ admits a projective symplectic resolution if $\sigma^{(i)}$ is 
indivisible or if $\sigma^{(i)}$ is twice a 
root $\beta \in \Sigma_{\lambda,\theta}$ satisfying $p(\beta)=2$, and otherwise it does not.

Therefore, if $\Nak\lambda{\sigma^{(i)}} \theta$ admits a projective
symplectic resolution for all $i$, it follows that each
$S^{n_i}\Nak\lambda{\sigma^{(i)}}\theta$ admits a projective symplectic
resolution, and hence so does $\Nak\lambda\a\theta$. 




If, on the other hand, some $\Nak\lambda{\sigma^{(i)}}\theta$ did not
admit a projective symplectic resolution, then Corollary \ref{c:proper=proj} implies that $\sigma^{(i)}$ is a divisible anisotropic root which is
not twice a root $\beta \in \Sigma_{\lambda,\theta}$ satisfying $p(\beta)=2$.  In
this case, by the proof of Theorem \ref{thm:nonisonores},
\'etale-locally $\Nak\lambda{\sigma^{(i)}}\theta$ is a cone which admits a partial projective Poisson resolution $\Nak\lambda{\sigma^{(i)}}{\theta'}$ which itself is terminal, locally factorial, and singular. Taking products with the other factors, we see that \'etale-locally $\Nak\lambda\a\theta$ is itself a cone with a partial crepant resolution by a terminal, locally factorial, and singular variety.  Such a variety does not admit a symplectic resolution, as explained in the proof of Theorem \ref{thm:nonisonores}. By \cite[Theorem 2.2]{BSnon}, $\Nak\lambda \a \theta$ itself does not admit a projective symplectic resolution. This completes the proof.


Notice that Theorems \ref{thm:main1} and \ref{thm:proper} also follow from the above argument.

\subsection{Formal resolutions}

Let $\a$ be a divisible anisotropic root, and assume that $\alpha$ is not
of the form $\alpha=2\beta$ with $p(\beta)=2$. Though it might not be
obvious from Corollary \ref{cor:openfact}, the nature of the
obstructions to the existence of a symplectic resolution of
$\Nak\lambda{\a}\theta$ is quite subtle. We have shown that Zariski
locally no resolution exists.
But then one
can ask if a resolution exists \'etale locally, or in the formal
neighborhood of a point? In this section we give a precise answer to
this question.

\begin{defn}
The closed point $x \in \Nak{\lambda}{\a}{\theta}$ is said to be \emph{formally resolvable} if the formal neighborhood $\widehat{\mathfrak{M}}_{\lambda}(\a,\theta)_x$ of $x$ in $ \Nak{\lambda}{\a}{\theta}$ admits a projective symplectic resolution. 
\end{defn}

\begin{lem}\label{lem:formal0}
If $0 \in \mathfrak{M}_0(\alpha,0)$ is formally resolvable, then $\mathfrak{M}_0(\alpha,0)$ also admits a projective symplectic resolution, and conversely.
\end{lem}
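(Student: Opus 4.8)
The plan is to exploit the fact that $\mathfrak{M}_0(\alpha,0)$ is a conical symplectic singularity, i.e.\ it carries a $\C^\times$-action contracting everything to the cone point $0$. Indeed, $\Rep(\overline{Q},\alpha)$ carries the scaling $\C^\times$-action, under which the moment map $\mu$ is homogeneous (of weight $2$), so $\mu^{-1}(0)$ is $\C^\times$-stable; since $\theta = 0$ the GIT quotient is just the affine quotient $\Spec \C[\mu^{-1}(0)]^{G(\alpha)}$, and the induced grading on this ring is non-negative with degree-zero part $\C$. Hence $\mathfrak{M}_0(\alpha,0)$ is a cone with vertex $0$, and in particular $0$ lies in the closure of every $\C^\times$-orbit.

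The forward direction (a projective symplectic resolution of $\mathfrak{M}_0(\alpha,0)$ exists $\Rightarrow$ $0$ is formally resolvable) is the easy one: given such a resolution $\pi : Y \to \mathfrak{M}_0(\alpha,0)$, base-changing along $\Spec \widehat{\mathcal{O}}_{\mathfrak{M}_0(\alpha,0),0} \to \mathfrak{M}_0(\alpha,0)$ gives a projective morphism $\widehat{Y} \to \widehat{\mathfrak{M}}_0(\alpha,0)_0$ which is still a symplectic resolution (smoothness, properness, birationality, and non-degeneracy of the extended form are all preserved under the faithfully flat completion map, since these can be checked on the formal fibers / completed local rings). So the real content is the converse.

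For the converse, the strategy is the standard $\C^\times$-equivariant spreading-out argument for cones. Suppose the formal neighborhood $\widehat{\mathfrak{M}}_0(\alpha,0)_0$ admits a projective symplectic resolution; I first want to upgrade this to a $\C^\times$-equivariant projective symplectic resolution of $\widehat{\mathfrak{M}}_0(\alpha,0)_0$. A projective birational morphism to an affine cone is the blowup of a (graded, hence $\C^\times$-stable after averaging) ideal; more precisely, a projective resolution corresponds to a relatively ample line bundle, and one can average the corresponding graded ideal over the (reductive, indeed one-dimensional torus) $\C^\times$-action to make the whole construction equivariant — here one uses that $\mathrm{Pic}$ of the resolution is finitely generated and the $\C^\times$-action on the base lifts to an action on $\mathrm{Pic}$ of the resolution of finite order on the relevant ample cone. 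Once the resolution $\widehat{Y} \to \widehat{\mathfrak{M}}_0(\alpha,0)_0$ is $\C^\times$-equivariant and given by blowing up a $\C^\times$-stable ideal $\widehat{\mathcal{I}} \subset \widehat{\mathcal{O}}_{\mathfrak{M}_0(\alpha,0),0}$, I observe that $\widehat{\mathcal{I}}$ is the completion of a genuine graded ideal $\mathcal{I} \subset \C[\mathfrak{M}_0(\alpha,0)]$ supported (set-theoretically) at $0$: a $\C^\times$-stable ideal in the completion of a graded ring at the irrelevant maximal ideal is determined by its image in each graded piece, and since the grading is non-negative with finite-dimensional pieces, finitely many generators suffice and they can be taken homogeneous, defining $\mathcal{I}$. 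Let $Y \to \mathfrak{M}_0(\alpha,0)$ be the blowup along $\mathcal{I}$; its completion at the fiber over $0$ is $\widehat{Y}$, which is smooth, so $Y$ is smooth in a $\C^\times$-stable neighborhood of the central fiber. Finally, I use the contracting $\C^\times$-action: since every point of $\mathfrak{M}_0(\alpha,0)$ flows to $0$, and $Y \to \mathfrak{M}_0(\alpha,0)$ is $\C^\times$-equivariant and projective, every point of $Y$ flows into the central fiber; hence the $\C^\times$-stable open locus where $Y$ is smooth and where the pulled-back symplectic form is non-degenerate — which contains the whole central fiber — must be all of $Y$. Therefore $Y \to \mathfrak{M}_0(\alpha,0)$ is a projective symplectic resolution.

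The main obstacle is the $\C^\times$-equivariant upgrade of the formal resolution together with the descent of the resolution datum from the formal to the global setting: one must be careful that "projective birational morphism to the completion of the cone" really is the completion of a blowup of a graded ideal on the cone, and that the averaging over $\C^\times$ preserves the property of being a resolution (it does, since the ample cone of a resolution is an open subset of a finitely generated Picard group and $\C^\times$ acts on it, so there is a $\C^\times$-fixed point in it, or one simply takes a symmetrization of a given ample class). The actual geometry — that conicality forces a neighborhood-of-the-central-fiber statement to globalize — is the clean part, and is exactly the mechanism already invoked in the excerpt (Remark \ref{r:kls-nonexist} and \S\ref{sec:charintro}) when passing between $\mathfrak{M}_0((n),0)$ and its formal neighborhood of zero.
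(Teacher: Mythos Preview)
Your overall strategy coincides with the paper's: the variety is a cone (the $\C^\times$-scaling on $\Rep(\overline{Q},\alpha)$ descends because $\mu$ is homogeneous and $\theta=0$), and one should (i) lift the $\C^\times$-action on $\widehat{\mathfrak{M}}_0(\alpha,0)_0$ to the given formal resolution and then (ii) use the contracting action to globalize. The easy direction you give is fine and matches the paper's one-line argument by restriction/base change.

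The gap is in step (i), which you correctly flag as ``the main obstacle'' but do not actually carry out. The claim that a projective birational morphism to a cone is the blowup of an ideal which ``after averaging'' can be taken graded is not justified: replacing an ideal $I\subset\widehat{A}$ by the ideal generated by the homogeneous components of its elements has no reason to yield an isomorphic blowup. Likewise, your appeal to ``the $\C^\times$-action on the base lifts to an action on $\operatorname{Pic}$ of the resolution'' is circular---there is no $\C^\times$-action on $\widehat{Y}$ (and hence on its Picard group) until the lift is established; what one actually has is a family $\{t^*\widehat{Y}\}_{t\in\C^\times}$ of a priori different resolutions, and the content is to show this family is constant. This is precisely the point the paper does not attempt to argue directly, instead citing \cite[Proposition~5.2]{GK} and \cite[Theorem~1.4]{KaledinDynkin} for the lifting of the torus action to a crepant/symplectic resolution of a cone. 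Once that lift is granted, your globalization argument (descend a $\C^\times$-stable blowup ideal from the completion to the graded ring, then use that every point flows to the central fiber) is correct and is exactly the mechanism the paper sketches.
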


\begin{proof}
  Let $\Cs$ act on $\Rep(\overline{Q},\alpha)$ by dilations. Then the
  moment map $\mu$ is homogeneous of degree two and the action of
  $\G(\a)$ commutes with the action of $\Cs$. This implies that
  $\C[\Nak{\lambda}{\a}{\theta}]$ is an $\N$-graded, connected
  algebra. Note also that the Poisson bracket on
  $\C[\Nak{\lambda}{\a}{\theta}]$ has degree $-2$. The lemma
  follows from standard arguments; see \cite[Proposition 5.2]{GK},
  \cite[Theorem 1.4]{KaledinDynkin}, and the references therein. The
  idea is that: 1) The induced action of $\Cs$ on
  $\widehat{\mathfrak{M}}_{0}(\a,0)_0$ lifts to the resolution. 2) The
  $\Cs$-action allows one to globalize the resolution of the formal
  neighborhood of $0$ to a resolution of the whole of
  $\mathfrak{M}_0(\alpha,0)$.  For the converse statement, we restrict
  a symplectic resolution of $\mathfrak{M}_0(\alpha,0)$ to the formal
  neighborhood of zero.
\end{proof}

By Corollary \ref{cor:formnbd}, if one point in a stratum
$\Nak{\lambda}{\a}{\theta}_{\tau} \subset \Nak{\lambda}{\a}{\theta}$
is formally resolvable, then so too is every other point in the
stratum. If $\tau = (e_1, \beta^{(1)}; \ds ; e_k, \beta^{(k)})$, then
define the greatest common divisor $\gcd (\tau)$ of $\tau$ to be the
greatest common divisor of the $e_i$. If the greatest common divisor
of $\tau$ is $k$, then each point in
$\Nak{\lambda}{\a}{\theta}_{\tau}$ corresponds to a representation of
the form $Y^{\oplus k}$ for some $\theta$-polystable representation $Y$. Let $U_{\text{fr}} \subset \Nak{\lambda}{\a}{\theta}$ be the union
of all strata $\Nak{\lambda}{\a}{\theta}_{\tau}$ such that
$\gcd(\tau) = 1$.

\begin{lem}
	Let $\alpha \in \Sigma_{\lambda,\theta}$. Then $U_{\text{fr}}$ is a dense open subset of $\Nak{\lambda}{\a}{\theta}$.
\end{lem}

\begin{proof}
The set $U_{\text{fr}}$ is dense because it contains the open stratum $\Nak{\lambda}{\a}{\theta}_{(1,\a)}$, consisting of stable representations. We will show that the complement to $U_{\text{fr}}$ is closed in $\Nak{\lambda}{\a}{\theta}$. It suffices to show that if the greatest common divisor of $\rho $ is greater than one and $\Nak{\lambda}{\a}{\theta}_{\tau} \subset \overline{\Nak{\lambda}{\a}{\theta}}_{\rho}$ then $\gcd (\tau) > 1$ too. The argument is similar to the proof of Lemma \ref{lem:Uopen}. Let $x \in \Nak{\lambda}{\a}{\theta}_{\rho}$ and $G_{\rho} \subset \G(\a)$ its stabilizer. By Proposition \ref{prop:strata}, there exists $x' \in \Nak{\lambda}{\a}{\theta}_{\tau} $ such that its stabilizer $G_{\tau}$ contains $G_{\rho}$. Let $\gcd(\rho) = k$, so that $x$ corresponds to a representation $Y \otimes V$ for some $\theta$-polystable representation $Y$, and $k$-dimensional vector space $V$. Notice that $\a = k \dim Y$. Then $GL(V)$ is a subgroup of $G_{\rho}$, and hence of $G_{\tau}$ too. An elementary argument shows that this implies that $x'$ corresponds to a representation $Y' \otimes V$ for some $\theta$-polystable representation $Y'$. Thus, $\gcd (\tau) > 1$. In fact, we have shown that if $\Nak{\lambda}{\a}{\theta}_{\tau} \subset \overline{\Nak{\lambda}{\a}{\theta}}_{\rho}$, then $\gcd(\rho)$ divides $\gcd(\tau)$. Thus, $U_{\text{fr}}$ is open in $\Nak{\lambda}{\a}{\theta}$.
\end{proof}

\begin{thm}
  Let $\a \in \Sigma_{\lambda,\theta}$ be an anisotropic root, $\a = n \beta$ for some
  indivisible root $\beta$ and some $n > 1$. Assume that $(n,p(\beta)) \neq (2,2)$. Then a point $x$ is formally resolvable if and only if $x \in U_{\text{fr}}$. 
\end{thm}

\begin{proof}


Let $x \in \Nak{\lambda}{\a}{\theta}$ have representation type $\tau = (e_1, \beta^{(1)}; \ds ; e_k, \beta^{(k)})$, where
$m := \gcd(\tau)$. By Corollary \ref{cor:formnbd}, $\widehat{\mathfrak{M}}_{\lambda}(\a,\theta)_x \simeq
\widehat{\mathfrak{M}}_{0}(\mathbf{e},0)_0$ and hence Lemma \ref{lem:formal0} says that $x$ is formally
resolvable if and only if $\mathfrak{M}_{0}(\mathbf{e},0)$ admits a projective symplectic resolution. By definition, the greatest common divisor of
$\mathbf{e}$ is $m$.
Proposition \ref{prop:simplevector} says that $\mathbf{e}$ belongs to $\Sigma_{0,0}$ for the quiver underlying
$\mathfrak{M}_{0}(\mathbf{e},0)$. Moreover, by remark \ref{rem:etalep}, we have $p(\a) = p(\mathbf{e})$ which implies that
$\mathbf{e} = m \mathbf{f}$ with both $\mathbf{e}$ and $\mathbf{f}$ anisotropic. If $m = 1$ then Corollary \ref{c:proper=proj} (i) implies that $x$ is formally resolvable. Thus, we just need to show that if $m > 1$ then $x$ is not formally resolvable. 

First, we show: 
\begin{equation}\label{eq:LR22}
  \left( m, p(\mathbf{f}) \right) = (2,2) \quad \Leftrightarrow \quad \left( n,p\left( \beta \right) \right) = (2,2). 
\end{equation}
Assume that the left hand side of \eqref{eq:LR22} holds. Then $p(\alpha) = n^2 (p(\beta) -1) + 1$ implies that   
$$
n^2 (p(\beta) -1) + 1 = p(\a) = p(\mathbf{e}) = 5
$$
and hence $n^2 (p(\beta) -1) = 4$. But $p(\beta) > 1$ since $\beta$ is anisotropic, and $2$ divides $n$. Thus, $n = 2$ and $p(\beta) = 2$. Conversely, assume that the right hand side of \eqref{eq:LR22} holds. Then $5 = p(\a) = p(\mathbf{e})$ implies that $m^2 (p(\mathbf{f}) - 1) = 4$. Since $\mathbf{f}$ is anisotropic and we have assumed that $m > 1$, we deduce that $  \left( m, p(\mathbf{f}) \right) = (2,2)$. 

Notice that we have assumed in the statement of the theorem that $\left( n,p\left( \beta \right) \right) \neq (2,2)$. Thus, $\left(m,p(\mathbf{f})\right)$ cannot equal $(2,2)$.
Then Theorem \ref{thm:nonisonores} says that $\mathfrak{M}_{0}(\mathbf{e},0)$ does not admit a projective symplectic resolution because $m > 1$.
\end{proof}

In the case where $\a \in \Sigma_{\lambda,\theta}$ equals $2 \beta$
for some
 root $\beta \in \Sigma_{\lambda,\theta}$ with
$p(\beta) = 2$, every point in $\Nak{\lambda}{\a}{\theta}$ is formally
resolvable. Similarly, if $\a$ is  indivisible (or a multiple of an isotropic root), then
every point in $\Nak{\lambda}{\a}{\theta}$ is formally resolvable.

\begin{remark}\label{r:not-resolv-form-resolv}
If $U_{\text{fr}} \subsetneq \Nak{\lambda}{\a}{\theta}$ then Corollary \ref{cor:openfact} implies that any open subset of $U_{\text{fr}}$ not contained in the smooth locus of $\Nak{\lambda}{\a}{\theta}$ does not admit a symplectic resolution, i.e. the singular locus of $U_{\text{fr}}$ consists of points that cannot be resolved Zariski locally, but do admit a resolution in a formal neighborhood (in fact \'etale locally). 
\end{remark}

\section{Namikawa's Weyl group}\label{sec:Namikawa}

In the paper \cite{Namikawa2}, Namikawa defined a finite group $W$ associated to any conic affine symplectic singularity $X$ such that the symplectic form on $X$ has weight $\ell > 0$ with respect to the torus action. The group $W$ acts as a reflection group on $H^2(Y,\mathbb{R})$, where $Y \rightarrow X$ is any $\Q$-factorial terminalization of $X$, whose existence is guaranteed by the minimal model program. The group $W$ plays a key role in the birational geometry of $X$; see \cite{Namikawa3} and \cite{BellamyNamikawa}.  

One computes $W$ as follows: let $\mc{L}$ be a codimension $2$ leaf of $X$ and $x \in \mc{L}$. Then the formal neighborhood of $x$ in $X$ is isomorphic to the formal neighborhood of $0$ in $\C^{2(n-1)} \times \C^2 / \Gamma$, where $2 n = \dim X$ and $\Gamma \subset SL_2(\C)$ is a finite group; see \cite[Lemma 1.3]{Namikawa}. Associated to $\Gamma$, via the McKay correspondence, is a Weyl group $W_{\mc{L}}$ of type $A,D$ or $E$. The fundamental group $\pi_1(\mc{L})$ acts on $W_{\mc{L}}$ via Dynkin automorphisms. Let $W_{\mc{L}}'$ denote the centralizer of $\pi_1(\mc{L})$  in $W_{\mc{L}}$. Then 
$$
W := \prod_{\mc{L}} W_{\mc{L}}'. 
$$
Thus, in order to compute $W$, it is essential to classify the codimension $2$ leaves of $X$, and describe $\pi_{1} (\mc{L})$. This is the goal of this section.

\subsection{The proof of Theorem \ref{t:codim-two-strata}}
\label{sec:codim-two-strata-pf} We assume
throughout that $\a \in \Sigma_{\lambda,\theta}$, hence it is a
root. Therefore the support of $\a$ on the quiver is connected.  We
can assume, up to replacing the quiver by the subquiver whose vertices
are the support of $\a$, and whose arrows are the ones with endpoints
in the support, that $\a$ is sincere.  Then, the quiver is
connected. We may assume that $\a$ is imaginary, otherwise the
statement is vacuous.


Our goal is to compute the codimension two leaves of
$\Nak{\lambda}{\a}{\theta}$, proving Theorem
\ref{t:codim-two-strata}. Recall from Definition \ref{defn:isodecomp}
that
$\alpha = \beta^{(1)} + \cdots + \beta^{(s)} + m_1 \gamma^{(1)} +
\cdots m_t \gamma^{(t)}$ is an isotropic decomposition if
\begin{enumerate}
	\item[(a)] $\beta^{(i)}, \gamma^{(j)} \in \Sigma_{\lambda,\theta}$. 
	\item[(b)] The $\beta^{(i)}$ are imaginary roots.
	\item[(c)] The $\gamma^{(i)}$ are \textit{pairwise distinct} real roots.
	\item[(d)] If $\overline{Q}''$ is the quiver with $s + t$ vertices without
	loops and $-(\alpha^{(i)}, \alpha^{(j)})$ arrows from vertex $i$ to vertex $j   \neq i$, where $\alpha^{(i)},\alpha^{(j)} \in \{ \beta^{(1)}, \ds,
	\beta^{(s)}, \gamma^{(1)} , \ds , \gamma^{(t)} \}$, then $Q''$ is an
	affine Dynkin quiver.
	\item[(e)] The dimension vector $(1,\ds, 1, m_1, \ds, m_t)$ of $Q''$ (where
	there are $s$ ones) equals $\delta$, the minimal imaginary root.
\end{enumerate}
To prove this, first let us consider a general stratum 
$\tau$ of $\Nak\lambda\alpha\theta$, 
\begin{equation} \label{e:tau-defn}
\tau = \left(n_1, \beta^{(1)}; \ds ; n_s, \beta^{(s)}; m_1,
\gamma^{(1)}; \ds ; m_t, \gamma^{(t)} \right), \quad \text{$\beta^{(i)}$ are imaginary, and $\gamma^{(i)}$ are real}.
\end{equation}
Since there is only one $\theta$-stable representation of dimension equal to each real root in $\Sigma_{\lambda,\theta}$, it follows that the $\gamma^{(i)}$ are all distinct.

Let us now set $\alpha^{(i)} := \beta^{(i)}$ for $1 \leq i \leq s$ and
$\alpha^{(i)} = \gamma^{(i-s)}$ for $s+1 \leq i \leq s+t$. 
Let $k_i := n_i$ 
for $1 \leq i \leq s$ and $k_{i} = m_{i-s}$ for
$s+1 \leq i \leq s+t$; let $\mathbf{k} = (k_1,\ds, k_{s+t})$. 
By Theorem \ref{thm:etalelocalgeneral}, at a point of 
this stratum, $\Nak\lambda\alpha\theta$ is \'etale-equivalent to
$\Nak{Q'}{0}{\mathbf{k}}_{0}$, where the notation means we use the
quiver $Q'$ instead of $Q$. Recall
that $\overline{Q}'$ is the quiver with $s+t$
vertices, $2 p(\alpha^{(i)})$ loops at the $i$th vertex and
$-(\alpha^{(i)},\alpha^{(j)})$ arrows between $i$ and $j$.  

Note that $Q''$ is obtained from $Q'$ by discarding all loops at
vertices. We will prove that, in the case that the stratum has
codimension two, $\Nak{Q''}{0}{\mathbf{k}}_{0}$ \'etale-locally
describes a transverse slice to the stratum.

\begin{lem}\label{l:q''-tr} Suppose that $\tau$ is as in \eqref{e:tau-defn} and moreover
	$n_i=1$ for all $i$.
	Then
	at every point of the stratum 
	there is an \'etale-local transverse slice isomorphic to a neighborhood
	of zero in $\Nak{Q''}{0}{\mathbf{k}}$.
\end{lem}
\begin{proof}
	A neighborhood of a point of the stratum is \'etale-equivalent to
	a neighborhood of zero in $\Nak{Q''}{0}{\mathbf{k}}$. Inside the latter,
	the stratum containing zero consists of the representations which are a direct sum of simple representations, one at each vertex. At the vertices $1,\ldots,s$, this representation has dimension one; at the other vertices there are no loops and hence the simple representations are the standard ones.  The stratum has dimension $2 \sum_{i=1}^s p(\beta^{(i)})$, where $p(\beta^{(i)})$ equals the number of loops at the vertex $i$. A transverse slice is thus given by the representations which assign zero to all of the loops, which obviously identifies
	with $\Nak{Q''}{0}{\mathbf{k}}$.
\end{proof}
\begin{lem}\label{l:codim2} 
	Suppose that $\tau$ has codimension two. Then $n_i = 1$ for all $i$.
	Moreover, the anisotropic $\beta^{(i)}$ are pairwise distinct,
        except
        in the case where $Q''$ is of affine type $A_1$, so that $s=2$, $t=0$, and $\beta:=\beta^{(1)}=\beta^{(2)}$ has self pairing $(\beta,\beta)=-2$.
\end{lem}
\begin{proof}
	The codimension two condition can be written as:
	\begin{equation}\label{e:dimcount}
	1=p(\alpha) - \sum_i  p \left(\beta^{(i)} \right) - \sum_i p
	\left(\gamma^{(i)}\right) = p(\alpha) - \sum_i 
	p\left(\beta^{(i)}\right),
	\end{equation}
	since $\dim \Nak{\lambda}{\a}{\theta}_{\tau} = 2\sum_i p
	\left(\beta^{(i)} \right) + 2\sum_i p \left(\gamma^{(i)}\right)$. 
	Note that the $n_i \beta^{(i)}$ are
	themselves imaginary roots.  Let $I_{\text{ani}} \subseteq I := \{1,\ldots,s\}$ be the set
	of indices such that $\beta^{(i)}$ is anisotropic, and let $I_{\text{iso}}:=I\setminus I_{\text{ani}}$; so $p(\beta^{(i)})=1$ if and only if $i \in I_{\text{iso}}$. Note the identity
	\[
	p(m\alpha) = m^2(p(\alpha) - 1) + 1.
	\]
	Since $\alpha \in
	\Sigma_{\lambda,\theta}$,
	\[
	p(\alpha) \geq 
	\sum_{i \in I_{\text{ani}}} p\left(n_i\beta^{(i)}\right) + \sum_{i \in I_{\text{iso}}}
	n_i p\left( \beta^{(i)} \right) + \sum_i 
	p\left(\gamma^{(i)}\right) = \sum_{i \in I_{\text{ani}}} (n_i^2 p\left(\beta^{(i)}\right) + (1-n_i^2)) + \sum_{i \in I_{\text{iso}}} n_i,
	\]
	with equality holding only if $s=1$ and $\beta^{(1)}$ is anisotropic.
	Therefore, the RHS of \eqref{e:dimcount} is greater than or equal to
	\[
	\sum_{i \in I_{\text{ani}}} (n_i^2-1) (p\left(\beta^{(i)}\right)-1)
	+ \sum_{i \in I_{\text{iso}}} (n_i-1),
	\]
	again with equality only if $s=1$ and $\beta^{(1)}$ is anisotropic.
	Therefore, if $n_i >
	1$ for any $i$, 
	then the RHS of \eqref{e:dimcount} is strictly greater than one, a
	contradiction.

	Next we show that the anisotropic $\beta^{(i)}$ are
	all distinct, except in the case given.  Group the anisotropic roots 
        that are not distinct together: let $I' \subseteq I_{\text{ani}}$ be an index set so that $\beta^{(i)}, i \in I'$ gives all of the distinct anisotropic roots once each,
	and let $\ell_i := |\{j: \beta^{(j)} = \beta^{(i)}\}|$. Then, since $p(\alpha) \geq \sum_{i \in I_{\text{iso}}} p\left( \beta^{(i)} \right) + \sum_{i \in I'} p\left( \ell_i \beta^{(i)} \right)$, we conclude that
	\begin{multline*}
	p(\alpha) - \sum_{i \in I} p\left(\beta^{(i)}\right) \geq
	\sum_{i \in I'} p \left( \ell_i \beta^{(i)} \right) - \ell_i p \left( \beta^{(i)} \right)
	= \sum_{i \in I'} (\ell_i^2-\ell_i) (p\left(\beta^{(i)}\right)-1)-(\ell_i-1) \\ = \sum_{i \in I'} (\ell_i - 1)(\ell_i (p\left( \beta^{(i)} \right) - 1) - 1),
	\end{multline*}
	which        is greater than one if any $\ell_i > 1$ with $i \in I_{\text{ani}}$, unless this happens for a unique $i \in I_{\text{ani}}$ with $\ell_i=2$ and $p\left( \beta^{(i)} \right) = 2$. To have the strict equality $p(\alpha) =  \sum_{i \in I_{\text{iso}}} p\left( \beta^{(i)} \right) + \sum_{i \in I'} p\left( \ell_i \beta^{(i)} \right)$, we also need to have $\alpha=\ell_i \beta^{(i)}$.  We obtain
      exactly the exceptional case stated.

       We also give an alternative argument to the preceding paragraph: if some of the roots $\beta_i, i \in J \subseteq I$ are the same root, call it $\beta$, for $|J|>1$, then the subset of $Q''$ supported on $J$ is a complete graph with $-(\beta, \beta)$ arrows between each pair of vertices. Given $\beta$ is anisotropic and  $Q''$ is affine Dynkin, we must have  $(\beta,\beta)=-2$ and $|J| = 2$. In this case this subgraph is already affine Dynkin of type $A_1$, so it is the entire quiver $Q''$. \qedhere
        
\end{proof}

We can now proceed with the proof of the theorem:

\begin{proof}[Proof of Theorem \ref{t:codim-two-strata}]
	Consider a general stratum $\tau$ as in \eqref{e:tau-defn}. By
	Lemmas \ref{l:q''-tr} and \ref{l:codim2}, we know that $\tau$ has
	codimension two if and only if $n_i = 1$ for all $i$ and
	$\dim \Nak{Q''}{0}{\mathbf{k}} = 2$. The latter is certainly
	true if $\tau$ is given by an isotropic decomposition. Moreover, in this
	case $\Nak{Q''}{0}{\mathbf{k}}$ is a du Val singularity of type given by the quiver $Q''$.  
	
	It remains only to show that, if $\tau$ has codimension two, then
	$Q''$ is affine Dynkin (ADE), with $\mathbf{k}$ the minimal
	imaginary root. Consider the canonical decomposition of
	$\mathbf{k}$, say
	$\mathbf{k} = \mathbf{k}^{(1)} + \cdots + \mathbf{k}^{(r)}$.  Then
	$\Nak{Q''}{0}{\mathbf{k}} \cong \prod_{i=1}^r
	\Nak{Q''}{0}{\mathbf{k}^{(i)}}$.
	The dimension of the latter is $2\sum_{i=1}^r
	p(\mathbf{k}^{(i)})$.
	Hence exactly one of the $\mathbf{k}^{(i)}$ is isotropic, i.e.,
	$p(\mathbf{k}^{(i_0)})=1$ for some $i_0$, and the others are real,
	i.e., $p(\mathbf{k}^{(i)}) = 0$ for $i \neq i_0$. Let
	$\mathbf{k}' := \mathbf{k}^{(i_0)}$. Since
	$\mathbf{k}' \in \Sigma_{0,0}(Q'')$, it follows that it is the
	minimal imaginary root of some affine Dynkin subquiver of $Q''$ (by
	the argument of the proof of \cite[Proposition 1.2.(2)]{CBdecomp}).  
	
	We claim that $\mathbf{k}=\mathbf{k}'$. Given this, since every component of $\mathbf{k}$ is nonzero, $Q''$ is indeed affine Dynkin, which completes the proof.  
	
	To prove the claim, write $\mathbf{k}' = (k'_1, \ldots, k'_{s+t})$ with $k'_i \leq k_i$ for all $i$.  Let $\alpha' := \sum_{i=1}^{s+t} k'_i \alpha^{(i)}$.  Then
	Lemmas \ref{l:q''-tr} and \ref{l:codim2} applied to $\alpha'$ show
	also that the stratum $\tau'$ corresponding to $\mathbf{k}'$ in $\Nak\lambda{\alpha'}\theta$ has codimension two. That is:
	\begin{equation}\label{eq:WeylNameq1}
	2\sum_{i: k'_i \neq 0} p(\alpha^{(i)}) = \dim \Nak\lambda{\alpha'}\theta - 2.
	\end{equation}
	By Lemma \ref{l:cd-ineq} below, the RHS of \eqref{eq:WeylNameq1} is at most $2p(\alpha')-2$.  Now adding
	$\sum_{i: k'_i=0} p(\alpha^{(i)})$ to both sides, we obtain:
	\begin{equation}\label{eq:WeylNameq2}
	\sum_i p(\alpha^{(i)}) \leq p(\alpha') + \sum_{i: k'_i = 0} p(\alpha^{(i)}) - 1.
	\end{equation}
	The LHS of \eqref{eq:WeylNameq2} equals $p(\alpha)-1$ by assumption. Therefore we obtain:
	\begin{equation}\label{eq:WeylNameq3}
	p(\alpha) \leq p(\alpha') + \sum_{i: k'_i = 0} p(\alpha^{(i)}).
	\end{equation}
	Now, replace $\alpha'$ and each of the $\alpha^{(i)}$ in the RHS of \eqref{eq:WeylNameq3} by their canonical
	decompositions and let $\eta_1, \ldots, \eta_q$ be the resulting
	elements of $\Sigma_{\lambda,\theta}$ with multiplicity.
	By Lemma \ref{l:cd-ineq} again, we obtain that $p(\alpha) \leq \sum_{i=1}^q p(\eta_i)$. Since  $\alpha \in \Sigma_{\lambda,\theta}$, this can only happen if $q=1$, i.e., $\alpha=\alpha'=\eta_1$. This is true if and only if $\mathbf{k}=\mathbf{k}'$.
\end{proof}

\begin{lem}\label{l:cd-ineq}
	Suppose $\alpha \in \N R^+_{\lambda,\theta}$ has canonical
	decomposition $\alpha = \sum_i n_i \sigma^{(i)}$ with respect to
	$\lambda$ and $\theta$.  Then
	$p(\alpha) \leq \sum_i n_i p(\sigma^{(i)})$. 
\end{lem}
\begin{proof}
	Let $\lambda'$ be such that $R^+_{\lambda'} = R^+_{\lambda,\theta}$.
	As $\alpha \in \N R^+_{\lambda,\theta}=\N R^+_{\lambda'}$, we know
	that $\mu^{-1}_\alpha(\lambda')$ is nonempty.  The latter is a fiber
	of a map $\bigoplus_{a \in Q_1} \Hom(\C^{\alpha_{t(a)}},
	\C^{\alpha_{h(a)}}) \to \pg(\alpha)$, where $\pg(\alpha)$ is the Lie
	algebra of $\PG(\alpha)$.  All of the irreducible components of the
	latter must have dimension at least $\sum_{a \in \overline{Q_1}}
	\alpha_{t(a)} \alpha_{h(a)} - \sum_{i \in Q_0} \alpha_i^2 +1 = \alpha
	\cdot \alpha - 2\langle \alpha, \alpha \rangle +1 = \alpha \cdot
	\alpha +2p(\alpha) - 1$.
	
	On the other hand, by \cite[Theorem 4.4]{CBmomap}, $\dim
	\mu^{-1}_{\alpha}(\lambda') = \alpha \cdot \alpha - \langle \alpha, \alpha \rangle + m= \alpha \cdot \alpha + p(\alpha) + (m-1)$
	where $m$ is the maximum value of $\sum_{i} p(\alpha^{(i)})$ with
	$\alpha^{(i)} \in R^+_{\lambda'}$ and $\alpha = \sum \alpha^{(i)}$; as
	remarked at the top of page 3 in \cite{CBdecomp}, we have $m = \sum_i
	n_i p(\sigma^{(i)})$ (it is a direct consequence of \cite[Theorem
	1.1]{CBdecomp} which we discussed before Theorem
	\ref{thm:decompCB2}).\footnote{Another interpretation of these facts 
		is that $\mu^{-1}_\alpha(\lambda')$ has some
		irreducible component of maximum dimension whose generic element is
		semisimple with the canonical decomposition. The same fact can be
		deduced for $\mu^{-1}_\alpha(\lambda)^\theta$, replacing semisimple by canonically polystable.}  We conclude that
	$\alpha \cdot \alpha + p(\alpha) + (m-1) \geq \alpha \cdot \alpha +
	2p(\alpha) - 1$.  Therefore, $m \geq p(\alpha)$, as desired.
	%
\end{proof}
\begin{remark}
	The lemma can be strengthened to prove: for any decomposition
	$\alpha = \sum_j \alpha^{(j)}$ with
	$\alpha^{(j)} \in \N R^+_{\lambda,\theta}$, we have
	$\sum_j p(\alpha^{(j)}) \leq \sum_i n_i p(\sigma^{(i)})$.  This
	generalizes an observation on \cite[p.~3]{CBdecomp} (dealing with
	the case where the $\alpha^{(j)}$ are roots).  To prove this, for
	arbitrary $\alpha^{(j)}$, we can apply the lemma to each of the
	$\alpha^{(j)}$, and then we get that
	$\sum_j p(\alpha^{(j)}) \leq \sum_j p(\beta^{(j)})$ for some roots
	$\beta^{(j)} \in R^+_{\lambda,\theta}$ with $\alpha=\beta^{(j)}$;
	then we are back in the case of roots so that
	$\sum_j n_i p(\sigma^{(i)}) \geq \sum_j p(\beta^{(j)})$.
\end{remark}
\begin{remark}
	The arguments of \cite{CBmomap,CBdecomp} can be generalized to the
	context of the pair $(\lambda,\theta)$, which as we pointed out in \S
	\ref{sec:candec} would eliminate the need of picking a $\lambda'$ as in
	the proof of the lemma above.
\end{remark}

\def\cprime{$'$} \def\cprime{$'$} \def\cprime{$'$} \def\cprime{$'$}
  \def\cprime{$'$} \def\cprime{$'$} \def\cprime{$'$} \def\cprime{$'$}
  \def\cprime{$'$} \def\cprime{$'$} \def\cprime{$'$} \def\cprime{$'$}
  \def\cprime{$'$} \def\cprime{$'$}


\begin{thebibliography}{10}

\bibitem{ArPr-ProperHypertoric}
M.~Arbo and N.~Proudfoot.
\newblock Hypertoric varieties and zonotopal tilings.
\newblock {\em Int. Math. Res. Not. IMRN}, (23):7268--7301, 2016.

\bibitem{Beauville}
A.~Beauville.
\newblock Symplectic singularities.
\newblock {\em Invent. Math.}, 139(3):541--549, 2000.

\bibitem{BellamyNamikawa}
G.~Bellamy.
\newblock Counting resolutions of symplectic quotient singularities.
\newblock {\em Compos. Math.}, 152(1):99--114, 2016.

\bibitem{BellamyCrawQuotient}
G.~Bellamy and A.~Craw.
\newblock Birational geometry of symplectic quotient singularities.
\newblock {\em Invent. Math.}, 222(2):399--468, 2020. 
	
\bibitem{BCSquiver}
G.~Bellamy, A.~Craw and T.~Schedler.
\newblock Birational geometry of quiver varieties. 
\newblock {\em In preparation.}
	

\bibitem{BSnon}
G.~Bellamy and T.~Schedler.
\newblock On the (non)existence of symplectic resolutions of linear quotients.
\newblock {\em Math. Res. Lett.}, 23(6):1537--1564, 2016.

    \bibitem{BSCharacter}
    G.~Bellamy and T.~Schedler.
    \newblock Symplectic resolutions of character varieties.     
	\newblock {\em arXiv}, 1909.12545v1, 2019.

\bibitem{BezKaledinQuantization}
R.~Bezrukavnikov and D.~Kaledin.
\newblock Fedosov quantization in algebraic context.
\newblock {\em Mosc. Math. J.}, 4(3):559--592, 782, 2004.

\bibitem{OpenFactorial}
O.~Gabber, S.~Boissi\`ere and O.~Serman.
\newblock Sur le produit de vari\'et\'es localement factorielles ou
  {$\mathbb{Q}$}-factorielles.
\newblock {\em arXiv}, 1104.1861v3, 2011.

\bibitem{BorhoKraftSheets}
W.~Borho and H.~Kraft.
\newblock \"uber {B}ahnen und deren {D}eformationen bei linearen {A}ktionen
  reduktiver {G}ruppen.
\newblock {\em Comment. Math. Helv.}, 54(1):61--104, 1979.

\bibitem{BorhoMacPherson}
W.~Borho and R.~MacPherson.
\newblock Partial resolutions of nilpotent varieties.
\newblock In {\em Analysis and topology on singular spaces, {II}, {III}
  ({L}uminy, 1981)}, volume 101 of {\em Ast\'erisque}, pages 23--74. Soc. Math.
  France, Paris, 1983.

\bibitem{BPWQuant1}
T.~Braden, N.~Proudfoot, and B.~Webster.
\newblock Quantizations of conical symplectic resolutions {I}: local and global
  structure.
\newblock {\em Ast\'erisque}, (384):1--73, 2016.

\bibitem{CarterBook}
R.~W. Carter.
\newblock {\em Lie algebras of finite and affine type}, volume~96 of {\em
  Cambridge Studies in Advanced Mathematics}.
\newblock Cambridge University Press, Cambridge, 2005.

\bibitem{Craw-Hilbert}
A.~Craw, S.~Gammelgaard, A.~Gyenge, and B.~Szendr{\H{o}}i.
\newblock {\em Punctual Hilbert schemes for Kleinian singularities as quiver varieties}
\newblock {\em arXiv}, 1910.13420v2, 2019.

\bibitem{CBKleinian}
W.~Crawley-Boevey.
\newblock On the exceptional fibres of {K}leinian singularities.
\newblock {\em Amer. J. Math.}, 122(5):1027--1037, 2000.

\bibitem{CBmomap}
W.~Crawley-Boevey.
\newblock Geometry of the moment map for representations of quivers.
\newblock {\em Compositio Math.}, 126(3):257--293, 2001.

\bibitem{CBdecomp}
W.~Crawley-Boevey.
\newblock Decomposition of {M}arsden-{W}einstein reductions for representations
  of quivers.
\newblock {\em Compositio Math.}, 130(2):225--239, 2002.

\bibitem{CBnormal}
W.~Crawley-Boevey.
\newblock Normality of {M}arsden-{W}einstein reductions for representations of
  quivers.
\newblock {\em Math. Ann.}, 325(1):55--79, 2003.

\bibitem{Debarre}
O.~Debarre.
\newblock {\em Higher-dimensional algebraic geometry}.
\newblock Universitext. Springer-Verlag, New York, 2001.

\bibitem{Drezet}
J.-M. Drezet.
\newblock Points non factoriels des vari\'et\'es de modules de faisceaux
  semi-stables sur une surface rationnelle.
\newblock {\em J. Reine Angew. Math.}, 413:99--126, 1991.

\bibitem{DrezetNarasimhan}
J.-M. Drezet and M.~S. Narasimhan.
\newblock Groupe de {P}icard des vari\'et\'es de modules de fibr\'es
  semi-stables sur les courbes alg\'ebriques.
\newblock {\em Invent. Math.}, 97(1):53--94, 1989.

\bibitem{Flenner}
H.~Flenner.
\newblock Extendability of differential forms on nonisolated singularities.
\newblock {\em Invent. Math.}, 94(2):317--326, 1988.

\bibitem{AlmostCommutingVariety}
W.~L. Gan and V.~Ginzburg.
\newblock Almost-commuting variety, {$\mathscr{D}$}-modules, and {C}herednik
  algebras.
\newblock {\em IMRP Int. Math. Res. Pap.}, pages 26439, 1--54, 2006.
\newblock With an appendix by Ginzburg.

\bibitem{GK}
V.~Ginzburg and D.~Kaledin.
\newblock Poisson deformations of symplectic quotient singularities.
\newblock {\em Adv. Math.}, 186(1):1--57, 2004.

\bibitem{GordonQuiver}
I.~G. Gordon.
\newblock Quiver varieties, category $\mc{{O}}$ for rational {C}herednik
  algebras, and {H}ecke algebras.
\newblock {\em Int. Math. Res. Pap. IMRP}, (3):Art. ID rpn006, 69, 2008.

\bibitem{Hartshorne}
R.~Hartshorne.
\newblock {\em Algebraic {G}eometry}.
\newblock Springer-Verlag, New York, 1977.
\newblock Graduate Texts in Mathematics, No. 52.

\bibitem{SchwarzHamred}
H.-C.~Herbig, G.~W.~Schwarz, and C.~Seaton.
\newblock Symplectic quotients have symplectic singularities.
\newblock {\em Compos. Math.}, 156(3):613--646, 2020.


\bibitem{KacThm}
V.~G. Kac.
\newblock Infinite root systems, representations of graphs and invariant
  theory.
\newblock {\em Invent. Math.}, 56(1):57--92, 1980.

\bibitem{KaledinDynkin}
D.~Kaledin.
\newblock On crepant resolutions of symplectic quotient singularities.
\newblock {\em Selecta Math. (N.S.)}, 9(4):529--555, 2003.

\bibitem{Kaledinsympsingularities}
D.~Kaledin.
\newblock Symplectic singularities from the {P}oisson point of view.
\newblock {\em J. Reine Angew. Math.}, 600:135--156, 2006.

\bibitem{KaledinLehn}
D.~Kaledin and M.~Lehn.
\newblock Local structure of hyperk\"ahler singularities in {O}'{G}rady's
  examples.
\newblock {\em Mosc. Math. J.}, 7(4):653--672, 766--767, 2007.

\bibitem{KaledinLehnSorger}
D.~Kaledin, M.~Lehn, and Ch. Sorger.
\newblock Singular symplectic moduli spaces.
\newblock {\em Invent. Math.}, 164(3):591--614, 2006.

\bibitem{KaledinVerbitsky}
D.~Kaledin and M.~Verbitsky.
\newblock Period map for non-compact holomorphically symplectic manifolds.
\newblock {\em Geom. Funct. Anal.}, 12(6):1265--1295, 2002.

\bibitem{KingStable}
A.~D. King.
\newblock Moduli of representations of finite-dimensional algebras.
\newblock {\em Quart. J. Math. Oxford Ser. (2)}, 45(180):515--530, 1994.

\bibitem{Kronheimer-Nakajima}
P.~B. Kronheimer and H. Nakajima.
\newblock Yang-{M}ills instantons on {ALE} gravitational instantons.
\newblock {\em Math. Ann.}, 288(2):263--307, 1990.

\bibitem{Kuznetsov}
A.~Kuznetsov.
\newblock Quiver varieties and {H}ilbert schemes.
\newblock {\em Mosc. Math. J.}, 7(4):673--697, 767, 2007.

\bibitem{LeBSimple}
L.~Le~Bruyn.
\newblock Simple roots of deformed preprojective algebras.
\newblock {\em arXiv}, 0107027v2, 2001.

\bibitem{LeBCoadjoint}
L.~Le~Bruyn.
\newblock Noncommutative smoothness and coadjoint orbits.
\newblock {\em J. Algebra}, 258(1):60--70, 2002.
\newblock Special issue in celebration of Claudio Procesi's 60th birthday.

\bibitem{LSOGrady}
M.~Lehn and C.~Sorger.
\newblock La singularit\'e de {O}'{G}rady.
\newblock {\em J. Algebraic Geom.}, 15(4):753--770, 2006.

\bibitem{LosevQuantOrbit}
I.~Losev.
\newblock Deformations of symplectic singularities and the orbit method.
\newblock 1605.00592, 2016.

\bibitem{Luna}
D.~Luna.
\newblock Slices \'etales.
\newblock In {\em Sur les groupes alg\'ebriques}, pages 81--105. Bull. Soc.
  Math. France, Paris, M\'emoire 33. Soc. Math. France, Paris, 1973.

\bibitem{Maffei}
A.~Maffei.
\newblock A remark on quiver varieties and {W}eyl groups.
\newblock {\em Ann. Sc. Norm. Super. Pisa Cl. Sci. (5)}, 1(3):649--686, 2002.

\bibitem{MarsdenWeinsteinStratification}
M.~Martino.
\newblock Stratifications of {M}arsden-{W}einstein reductions for
  representations of quivers and deformations of symplectic quotient
  singularities.
\newblock {\em Math. Z.}, 258(1):1--28, 2008.

	\bibitem{McGertyNevinsGalois}
	K.~McGerty and T.~Nevins.
	\newblock Springer theory for symplectic Galois groups.
	\newblock 1904.10497, 2019. 

      \bibitem{Milne}
J.~S. Milne.
\newblock {\em \'{E}tale {C}ohomology}, volume~33 of {\em Princeton
  Mathematical Series}.
\newblock Princeton University Press, Princeton, N.J., 1980.


\bibitem{Nak1994}
H.~Nakajima.
\newblock Instantons on {ALE} spaces, quiver varieties, and {K}ac-{M}oody
  algebras.
\newblock {\em Duke Math. J.}, 76(2):365--416, 1994.

\bibitem{NakajimaBook}
H.~Nakajima.
\newblock {\em Lectures on {H}ilbert {S}chemes of {P}oints on {S}urfaces},
  volume~18 of {\em University Lecture Series}.
\newblock American Mathematical Society, Providence, RI, 1999.

\bibitem{NakDuke98}
H. Nakajima.
\newblock Quiver varieties and {K}ac-{M}oody algebras.
\newblock {\em Duke Math. J.}, 91(3):515--560, 1998.

\bibitem{Nakajima-ALE-Quiver}
H. Nakajima.
\newblock Sheaves on {ALE} spaces and quiver varieties.
\newblock {\em Mosc. Math. J.}, 7(4):699--722, 767, 2007.

\bibitem{NamikawaNote}
Y~Namikawa.
\newblock A note on symplectic singularities.
\newblock {\em arXiv}, 0101028, 2001.

\bibitem{Namikawa2}
Y.~Namikawa.
\newblock Poisson deformations of affine symplectic varieties, {II}.
\newblock {\em Kyoto J. Math.}, 50(4):727--752, 2010.

\bibitem{Namikawa}
Y.~Namikawa.
\newblock Poisson deformations of affine symplectic varieties.
\newblock {\em Duke Math. J.}, 156(1):51--85, 2011.

\bibitem{Namikawa3}
Y.~Namikawa.
\newblock Poisson deformations and birational geometry.
\newblock {\em J. Math. Sci. Univ. Tokyo}, 22(1):339--359, 2015.

\bibitem{NeemanQuotient}
A.~Neeman.
\newblock The topology of quotient varieties.
\newblock {\em Ann. of Math. (2)}, 122(3):419--459, 1985.

\bibitem{OGr-K3}
K.~G. O'Grady.
\newblock Desingularized moduli spaces of sheaves on a {$K3$}.
\newblock {\em J. Reine Angew. Math.}, 512:49--117, 1999.

\bibitem{OGr-sixirr}
K.~G. O'Grady.
\newblock A new six-dimensional irreducible symplectic variety.
\newblock {\em J. Algebraic Geom.}, 12(3):435--505, 2003.

\bibitem{RichardsonPricipalOrbit}
R.~W. Richardson, Jr.
\newblock Principal orbit types for algebraic transformation spaces in
  characteristic zero.
\newblock {\em Invent. Math.}, 16:6--14, 1972.

	\bibitem{SchedlerTirelli}
	T.~Schedler and A.~Tirelli.
	\newblock Symplectic resolutions for multiplicative quiver varieties and character varieties for punctured surfaces.
	\newblock 1812.07687, to appear in a Birkh\"auser volume.

\bibitem{SjamaarL}
R.~Sjamaar and E.~Lerman.
\newblock Stratified symplectic spaces and reduction.
\newblock {\em Ann. of Math. (2)}, 134(2):375--422, 1991.

	\bibitem{TirelliHiggs}
	A.~Tirelli.
	\newblock Symplectic resolutions for Higgs moduli spaces. 
	\newblock {\em Proc. Amer. Math. Soc.}, 147(4):1399--1412, 2019. 

\bibitem{PopovVinberg}
\`E.~B. Vinberg and V.~L. Popov.
\newblock Invariant theory.
\newblock In {\em Algebraic geometry, 4 ({R}ussian)}, Itogi Nauki i Tekhniki,
  pages 137--314, 315. Akad. Nauk SSSR, Vsesoyuz. Inst. Nauchn. i Tekhn.
  Inform., Moscow, 1989.

\end{thebibliography}
\end{document}